\documentclass[a4paper,12pt,oneside,reqno]{amsart}

\usepackage[utf8]{inputenc}
\usepackage[colorlinks]{hyperref}
\usepackage{comment}
\usepackage{lmodern}
\usepackage{geometry}
\geometry{left=3cm,right=3cm,top=2.5cm} 
\usepackage{array}
\usepackage{graphicx}
\graphicspath{{./pic/}}
\usepackage{amssymb, amsfonts, amsthm, amsmath}
\usepackage{enumerate}
\usepackage{enumitem}
\usepackage[english]{babel}
\usepackage[font=footnotesize]{caption}
\usepackage{xr}
\usepackage{txfonts,amssymb,amsmath,amsthm,bbm}

\newtheorem{theorem}{Theorem}[section]
\newtheorem{corollary}[theorem]{Corollary}
\newtheorem{lem}[theorem]{Lemma}
\newtheorem{prop}[theorem]{Proposition}
\newtheorem{thm}[theorem]{Theorem}
\newtheorem{rem}[theorem]{Remark}
\newtheorem{dfn}[theorem]{Definition}

\numberwithin{equation}{section}

\newcommand{\bbE}{\mathbb{E}}
\newcommand{\bbV}{\mathbb{V}}

\newcommand{\bbP}{\mathbb{P}}
\newcommand{\bbT}{\mathbb{T}}
\newcommand{\bbN}{\mathbb{N}}

\newcommand{\bbR}{\mathbb{R}}

\newcommand{\eps}{\epsilon}

\newcommand{\ul}{\underline}
\newcommand{\wh}{\widehat}

\newcommand{\cC}{\mathcal C}

\newcommand{\cB}{\mathcal B}

\newcommand{\cT}{\mathcal T}

\newcommand{\rmc}{{\rm c}}

\newcommand{\rmd}{{\rm d}}
\newcommand{\rme}{{\rm e}}

\newcommand{\rmP}{{\rm P}}

\newcommand{\rmE}{{\rm E}}
\newcommand{\rmZ}{{\rm Z}}

\DeclareMathOperator*{\argmin}{arg\,min}
\DeclareMathOperator*{\argmax}{arg\,max}


\newcommand{\abs}[1]{\left| #1\right|}


\newcommand{\calB}{\mathcal{B}}
\newcommand{\calC}{\mathcal{C}}
\newcommand{\calD}{\mathcal{D}}
\newcommand{\calE}{\mathcal{E}}
\newcommand{\calF}{\mathcal{F}}

\newcommand{\calT}{\mathcal{T}}

\newcommand{\fra}{\mathfrak{a}}

\newcommand{\bbG}{\mathbb{G}}

\newcommand{\bbL}{\mathbb{L}}
\newcommand{\bbW}{\mathbb{W}}



\newcommand{\sfl}{{\mathsf l}}

\newcommand{\sfr}{{\mathsf r}}

\newcommand{\sfC}{\mathsf{C}}

\newcommand{\sfP}{\mathsf{P}}

\newcommand{\sfR}{\mathsf{R}}

\newcommand{\sfZ}{\mathsf{Z}}



\newcommand{\ub}{\underline{b}}





\newcommand{\lb}{\left(}
\newcommand{\rb}{\right)}
\newcommand{\lbr}{\left\{}
\newcommand{\rbr}{\right\}}

\newcommand{\dd}{{\rm d}}


\newcommand{\case}[1]{C{\small ASE}\,#1.}



\def\1{\ifmmode {1\hskip -3pt \rm{I}}
\else {\hbox {$1\hskip -3pt \rm{I}$}}\fi} 


%
\newcommand{\be}[1]{\begin{equation}\label{#1}}
\newcommand{\ee}{\end{equation}}

\newcommand{\ent}[1]{\mathrm{ent}\lb #1\rb}


\newcommand{\red}{\textcolor{red}}

\newcommand{\FromItamar}[1]{\footnote{\red {Itamar: #1}}}

\begin{document}

\title[Percolation with clustering]
{Wetting Transition on Trees I: Percolation With Clustering}
\author[A. Cortines]{Aser Cortines}
\address{A. Cortines}
\email{aserpeixoto@gmail.com}
\author[I. Harel]{Itamar Harel}
\address{I. Harel,
	Technion - Israel Institute of Technology. Haifa, 3200003,
	Israel.}
\email{itamarharel01@gmail.com}
\author[D. Ioffe]{Dmitry Ioffe}
\address{D. Ioffe, Deceased.}
\author[O. Louidor]{Oren Louidor}
\address{O. Louidor,
	Technion - Israel Institute of Technology. Haifa, 3200003,
	Israel.}
\email{oren.louidor@gmail.com}

\begin{abstract}
	A new ``Percolation with Clustering'' (PWC) model is introduced, where (the probabilities of) site percolation configurations on the leaf set of a binary tree are rewarded exponentially according to a generic function, which measures the degree of clustering in the configuration. Conditions on such ``clustering function'' are given for the existence of a limiting free energy and a wetting transition, namely the existence of a non-trivial percolation parameter threshold above and only above which the set of ``dry'' (open) sites have an asymptotic density. Several examples of clustering functions are given and studied using the general theory. The results here will be used in a sequel paper to study the wetting transition for the discrete Gaussian free field on the tree subject to a hard wall constraint.
\end{abstract}

\maketitle

\begin{flushright}
\begin{minipage}{0.5\textwidth}
\small
{\it Dedicated to the memory of our dear friend and mentor Dima Ioffe, who was the driving force behind this work. The void left by his sudden passing will remain forever.  }
\end{minipage}
\end{flushright}
\medskip\

\section{Introduction and results}
\subsection{Introduction, context and motivation}
Site Percolation on a finite graph $\bbG = (\bbV, \bbE)$ is a probability law on the set of configurations $\{\sigma :\: \sigma \in \{0,1\}^{\bbV}\}$, under-which $\{\sigma(v) :\: v \in \bbV\}$ are i.i.d Bernoulli random variables with a prescribed parameter $p \in (0,1)$. For reasons which will become clear shortly, we shall refer to a site $v$ where $\sigma(v)=1$ as dry and otherwise wet. Identifying a configuration $\sigma$ with the dry-set: $A = \{v \in \bbV :\: \sigma(v) = 1\}$, and letting $p = (1+\rme^{-J})^{-1}$, this law can be written as
\begin{equation}
\label{e:Z1}
\sfP_{\bbV}^{J}(A) := \frac{1}{Z^{J}_\bbV}\,{\rm e}^{J \abs{A}}
\quad ; \qquad  A \subseteq \bbV \,,
\end{equation}
where $|A|$ denotes the cardinality of $A$, and $Z^J_V$ is a normalizing constant.

The usual question in this model is that of percolation, namely the existence with high probability (under the percolation measure and at large graph volume) of a macroscopic connected component of dry sites.  In this manuscript, we make two changes to this model. First, we drop the connectivity requirement and study the scale of the dry-set in its entirety. Second, we consider a modified version of~\eqref{e:Z1}, in which the probability of a configuration is reweighed exponentially according to some measure of the level of clustering in the corresponding dry-set. 

For a formal definition, let $\Phi: \{0,1\}^{\bbV} \to \bbR$ be a function on configuration, to be thought of as assigning smaller to more clustered sets, and consider the law
\begin{equation}
\label{e:Z2}
\sfP_\bbV^{\Phi, J}(A) := \frac{1}{Z_{\bbV}^{\Phi, J}}\,{\rm e}^{J \abs{A} - \Phi (A )}
\quad ; \qquad  A \subseteq \bbV \,,
\end{equation}
with $Z_\bbV^{\Phi,J}$ modified accordingly. We shall refer to the law~\eqref{e:Z2} as Percolation With Clustering (PWC). It will be shown that for proper choices $\Phi$ and $J$, a non-trivial competition between entropy (the number of configurations) and clustering-based energy (the exponent in~\eqref{e:Z2}) will occur, so that properties of the dry-set and in particular the question of its scale will become non-trivial. 

Aside from an intrinsic interest, motivation for this model can be found in the theory of localization and delocalization of random surfaces, which are subject to attractive and repulsive forces (see ~\cite{bolthausen2003localization, funaki, giacomin2001aspects, velenik2006localization} for excellent surveys on the subject). Concretely, consider the random field $h = (h_x :\: x \in \bbV)$, whose law $\rmP^f_\bbV$ has a density $f: \bbR^{\bbV} \to \bbR_+$ with respect to the Lebesgue measure on $\bbR^{\bbV}$. Attraction to the underlying domain can  be introduced be adding an atom at $0$ with mass $e^J$ to the each coordinate of the reference measure, where $J \in (-\infty, \infty)$ marks the magnitude of the  pinning to $0$. Repulsion can be additionally (or alternatively) imposed by conditioning the surface to be positive. This will force the surface to repel from the underlying domain, or otherwise suffer an entropic cost. Altogether one obtains the law,
\begin{equation}
\label{e:Z4}
\rmP^{f, +,J}_{\bbV} (\rmd h) := \frac{1}{\rmZ^{f, +,J}_\bbV}
 	f(h) \prod_{x \in \bbV} \Big( \rmd h_x + {\rm e}^{J} \delta_0 (\rmd h_x) \Big) 
 \quad ; \qquad h \geq 0 \,,
\end{equation}
with $\rmZ^{+,J}_\bbV$ the usual normalizing constant.

A common interpretation of the above law is that of the interface between gas and liquid phases, which coexists in some thermodynamical system, with the gas lying on top of the liquid, which in turn lyes above an impenetrable hard wall at level zero. The gas is attracted to the domain by gravitation, but cannot cross it because of the wall. Sites $x \in \bbV$ where $h_x = 0$, capture ``dry'' locations in the domain, where gas reaches all the way to the bottom, thereby preventing the liquid from wetting it. Taking this view, positivity (vanishing) of the density of dry sites, means localization (delocalization) of the interface, and thus a  partial (complete) wetting of the domain. Of particular interest is the situation where  wetting is partial if $J > J^*$ and complete if $J < J^*$, where $J^* \in (-\infty, \infty)$ is a finite critical pinning force.  If this is the case, the system is said to exhibit a wetting transition.

To study the law of the dry-set, it is natural to expand the expression on the right hand side of~\eqref{e:Z4} as,
\begin{equation}
\label{e:Z1.4}
\rmP_\bbV^{f,+,J} (\rmd h) := \sum_{A \subseteq \bbV}
\rme^{J|A|} \frac{Z_\bbV^{f,+,A}}{\rmZ_\bbV^{f,+,J}}\, \rmP_\bbV^{f,+, A}(\rmd h) \,,
\end{equation}
where 
\begin{equation}
 	\rmP^{f,+,A}_\bbV(\rmd h) = \frac{f(h)}{Z_\bbV^{f,+,A}} \prod_{x \in A^\rmc} \rmd h_x \prod_{x \in A} \delta_0(\rmd h_x)\, =\, \rmP_\bbV^f \big(\rmd h\,\big|\,h \geq 0,\, \,h_{|_A} \equiv 0\big)
\end{equation}
and
\begin{equation}
\label{e:Z1.6}
 	\rmZ_\bbV^{f,+,A} :=  \int_{\bbR^{\bbV}_+} f(h) \prod_{x \in A^\rmc} \rmd h_x \prod_{x \in A} \delta_0(\rmd h_x) \ = \ \rmP_\bbV^f \big(h \geq 0,\, \,h_{|_A} \equiv 0\big) \,.
 \end{equation}
The last probability, which should be interpreted in the sense of densities, captures the probability that the field is positive yet pinned to the domain at $A$. The law of the interface is therefore of a convex combination of the measures $\rmP^{f,+,A}_\bbV(\rmd h)$ with weights given by $\rme^{J|A|} \rmZ_\bbV^{f,+,A}/\rmZ_\bbV^{f,+,J}$. 

Focusing on the distribution of the dry-set, law~\eqref{e:Z1.4} implies
\begin{equation}
\rmP_\bbV^{f,+,J} \big(A) \equiv 
	\rmP_\bbV^{f,+,J} \big(\{0\}^A \times (\bbR \setminus \{0\})^{A^\rmc}\big)
	 = \frac{1}{\rmZ_\bbV^{f,J}} \rme^{J|A|} \rmZ_\bbV^{f,+,A} \,.
\end{equation}
This shows that the configuration of dry sites under $\rmP_\bbV^{f,+,J}$ follows a PWC law~\eqref{e:Z2} with clustering function
\begin{equation}
\label{e:Z1.27}
	\Phi(A) := -\ln \rmZ_\bbV^{f,+,A} \,.
\end{equation}
For many choices of $f$, the probability density on the right hand side of~\eqref{e:Z1.6} is positively related to the degree of clustering in $A$, as it is easier for the surface to be pinned to a set which is, in some sense, more clustered. Thus $\Phi$ measures clustering in the way designated.
A general theory for PWC measures will thus allow us to decide on the nature of wetting and the existence of a wetting transition for such models.

In what follows, we develop a general theory for PWC measures, where the underlying graph is (the set of leaves of) the binary tree. We give formal definitions for the notions mentioned above, such as a clustering function and the scale of the dry-set. We then include general results about the existence of the free energy, asymptotic positivity of the density of dry sites, and the existence of a wetting transition. We then provide several examples of PWC measures with clustering functions, which either arise from other statistical models, as in the case above, or are natural to consider, from a mathematical point of view. Examples of the s	econd kind are also chosen because they are the same time approximations of ``real'' PWC measures (such as the one introduced above) and also amenable to analysis. Results from the general theory are then specialized to the examples, yielding more concrete conditions which determine the nature of wetting, in the cases studied.

\subsection{General theory}
\label{s:1.1}
Let $\bbT^{(n)} := \bigcup_{i=0}^n 
\{0,1\}^i$ be the binary tree of depth $n$ rooted at $\mathbf{0}^{(n)}$. While we keep 
the genealogical relation as usual, for notational convenience it will be useful to the 
define the {\em age} (occasionally {\em level} or {\em generation}) $|v|$ of vertex (occasionally site) $v \in \bbT^{(n)}$, as its distance from the leaves, not the root as is often the case. 
As such, the root (at age $n$) is the oldest vertex, while the leaves (at age $0$) are the youngest, and the children of a vertex are one generation younger than their parent. We let $\bbL_i^{(n)}$ denote the set of vertices whose age is $i \in [0,n]$, and for $i \geq |v|$, write $v|_i$ for the ancestor of $v$ in $\bbL_i^{(n)}$, The super-index $(n)$ will be occasionally omitted if no confusion is created this way.

Given $n$ and a non-empty subset $A\subseteq \bbL_0^{(n)}$ of the leaves, we let $\mathcal{T}(A)$ be the sub-tree of $\bbT^{(n)}$ which is {\em generated} by $A$. That is, $\calT \lb A \rb$ is the subtree of $\bbT^{(n)}$ which includes all vertices $\{v|_\ell :\:  v \in A, \ell=0, \dots, n \}$ and all interconnecting edges which are inherited from $\bbT^{(n)}$. We shall say that two sets $A, A' \subseteq \bbL_0^{(n)}$ are {\em equivalent}, and write $A \sim A'$, if the corresponding trees $\mathcal{T}(A)$ and $\mathcal{T}(A')$ are graph isomorphic. This is clearly an equivalence relation.

The PWC measure is a site-percolation-type measure on the set of leaves $\bbL_0^{(n)}$. As such, it is a distribution over the set of all assignments of ``open'' and ``closed'' to each leaf in $\bbL_0^{(n)}$, or equivalently a distribution over subsets $A$ (of open sites) thereof. As before, we shall refer to open and closed sites as {\em dry} and {\em wet} respectively and to the set of dry (open) sites as the {\em dry-set}.
  The main ingredient in the definition of this measure is a {\em clustering function}, from which the only requirement is
\begin{dfn} 
 \label{dfn:CF} 
 A function $\Phi: 2^{\bbL_0^{(n)}} \to \bbR$ is called {\em clustering} if 
 $\Phi (\emptyset )= 0$ and 
 $\Phi (A ) = \Phi (B)$
 whenever $A\sim B$. 
\end{dfn}

With the above at hand, we can thus define:
\begin{dfn}
 \label{dfn:PWC} 
Let $n \geq 1$, $\Phi: 2^{\bbL_0^{(n)}} \to \bbR$ clustering, and $J \in \bbR$.
The PWC-measure with clustering function $\Phi$ and pinning force $J$ is a probability measure on $2^{\bbL_0^{(n)}}$ which is determined by
\begin{equation}
\sfP_n^{\Phi, J}(A) := \frac{1}{\sfZ_n^{\Phi, J}}\,{\rm e}^{J \abs{A} - \Phi (A )}
\quad ; \qquad  A \subseteq \bbL_0^{(n)} \,,
\end{equation}
where 
\begin{equation}
\sfZ_n^{\Phi, J} := \sum_{A \subseteq \bbL_0^{(n)}}{\rm e}^{J \abs{A} - \Phi (A)} 
\end{equation}
is the associated {\em partition function}.
\end{dfn}
\noindent
We also denote by $\rmE_n^{\Phi,J}$ the associated expectation. With a bit of abuse, we shall also denote by $A$ the canonical random variable on $2^{\bbL_0^{(n)}}$, so that, e.g. $\rmP_n^{\Phi, J}(u \in A)$ or $\rmE_n^{\Phi,J}|A|$ are well defined.

When $\Phi \equiv 0$, one recovers the usual Bernoulli site percolation on $\bbL_0^{(n)}$ with sites being independently open (dry) with probability $p(J) = (1+{\rm e}^{-J})^{-1}$. One should think of $\Phi(A)$ as a function which measures, somehow, the degree of clustering in the set $A$, so that lower value of $\Phi(A)$ means higher degree of clustering in $A$. Then the law $\sfP_n^{\Phi, J}$ can be seen as a modification of the usual percolation distribution in which configurations with a larger degree of clustering are rewarded exponentially. 

Note that, contrary to the intuition mentioned earlier, the definition of a clustering function does not inherently relate $\Phi(A)$ to any specific measure of clustering within $A$. While this provides more generality, some results will require such a relationship, which will be achieved by imposing additional constraints on $\Phi$.

The underlying phenomenological question in this work is whether, given a pinning force and a clustering function, the dry-set is typically ``microscopically'' or ``macroscopically'' large under the PWC measures $\rmP_n^{\Phi, J}$. 
Addressing this question necessitates, of course, a formal definition of the above notions of scale.
Taking the Physics angle, we fix $J \in \bbR$ and consider, naturally, not one, but a sequence of clustering function $\{\Phi_n\}$ indexed by the height of the tree. We then define the {\em free energy} for the corresponding sequence of PWC measures as the limit
\begin{equation}
\zeta(J) := \lim_{n \to \infty} \zeta^{\Phi_n}_n(J)
\end{equation}
where
\begin{equation}
\label{e:1.4}
\zeta^{\Phi}_n(J) := \frac{1}{2^n} \ln \sfZ_n^{\Phi, J} \,,
\end{equation}
with $\sfZ_n^{\Phi, J}$ being the partition function from Definition~\eqref{dfn:PWC}. While the limit need not exist in general, sufficient conditions under which it does are given below.

Assuming the existence of the limit, we can make a formal sense of the scale of the dry-set via the following definition, which is standard in the theory of random surfaces. Again, we adhere to the terminology in the latter context.
\begin{dfn}
\label{d:1.3}
Let $J \in \bbR$, and for each $n$, let $\Phi_n$ be a clustering function on $\bbT_n$. The sequence of PWC-measures $\{\rmP_n^{J, \Phi_n}\}$ exhibits {\em complete wetting} (a macroscopic dry-set) if $\zeta(J) = 0$ and otherwise {\em partial wetting} (a microscopic dry-set).
\end{dfn}

The standard motivation for the above definition is as follows. Recall that by definition $\Phi_n(\emptyset) = 0$, so that $\zeta^{\Phi_n}_n(J)$ is non-negative. If we assume further that $|\Phi_n(A)|$ grows at most linearly in $|A|$ uniformly in $n$ (which is natural and will be a consequence of our assumptions in the examples below), then the contribution to the partition function from dry-sets whose size is sublinear (in $2^n$) is also sublinear on an exponential scale. Consequently, if $\zeta^{\Phi_n}_n(J)$ is asymptotically positive it must be dominated by dry-sets having positive density, i.e. a macroscopic one.

As $\zeta(J)$ is evidently non-decreasing in $J$, it makes sense to state,
\begin{dfn}
\label{d:1.4}
The {\em critical pinning force} $J^*$ associated with the sequence of clustering functions $\{\Phi_n\}$ is
\be{eq:J-star-PC}
 J^* = \inf\lbr J ~:~ \zeta (J ) >0\rbr .
\ee
We shall say that $\{\Phi_n\}$ exhibits a {\em wetting transition} if  $J^* \in (-\infty, \infty)$.
\end{dfn}

Next, we wish to impose restrictions on $\{\Phi_n\}$ under which the above definitions are proper, namely the free energy exists. These will come in the form of monotonicity with respect to some clustering order on configurations. We shall then derive a necessary and sufficient condition for the existence of a wetting transition. Accordingly, with $u \wedge v$ denoting, customarily, the youngest common ancestor of $u,v \in \bbT_n$, we have,
\begin{dfn} 
 \label{dfn:MoreC}
 Given two subsets $A,B\subseteq \bbL_0^{(n)}$ of the same cardinality $\abs{A}= \abs{B}$, we say 
 that $A$ is {\em more clustered} than $B$ and write $A\prec B$, if there exists a bijection $\sigma :A\to B$, such 
 that 
 \be{eq:ApreqB} 
 |u \wedge v |\leq |\sigma_u \wedge  \sigma_v| \quad \forall\, u,v\in A .
 \ee
\end{dfn}
It is easy to see that above relation is a partial order on $2^{\bbL_0{(n)}}$. We can then also have,
\begin{dfn} 
 \label{dfn:MC} 
 A clustering function $\Phi$ is called {\em monotone} if \\
 (a) For any $A$ and $B$ of equal cardinality 
 \be{eq:MC-a} 
 A\prec B\ \Rightarrow\ \Phi (A ) \leq \Phi (B) .
 \ee
 (b) For any disjoint subsets $A,B\subseteq \bbL_0$ , 
 \be{eq:MC-b}
 \Phi (A\cup B )\leq \Phi (A) + \Phi (B) .
 \ee
\end{dfn}

Now, for a clustering function $\Phi$ on $\bbL_0^{(n)}$ and $a_0 \in [0, 2^n]$, we define the {\em canonical partition function} associated with $\Phi$ as 
\be{eq:Phi-Gpf} 
\bbW_n (a_0) \equiv \bbW^{\Phi}_n (a_0 ) :=  \sum_{\abs{A} = a_0} {\rm e}^{-\Phi (A)} \,,
\ee
so that evidently 
\be{eq:Phi-J} 
\sfZ^{\Phi,J}_n = \sum_{a_0 = 0}^{2^n} {\rm e}^{J a_0} \bbW^\Phi_n (a_0 ) \,,
\ee
Then given a sequence $\{\Phi_n\}$ and $\epsilon \in [0,1]$ dyadic, namely a real number which is equal to $k/2^m$ for some integers $k$ and $m$, we also define the {\em canonical free energy} as 
\be{eq:Phi-FE} 
\omega(\epsilon) := \lim_{n\to\infty} \omega_n^{\Phi_n} (\epsilon) \,.
\ee
where
\be{eq:Phi-FE1} 
\omega^{\Phi}_n(\epsilon) := \frac{1}{2^n}\ln 
\bbW_n^{\Phi} (\epsilon 2^n ) \,,
\ee
whenever the limit exist.

The next proposition gives a sufficient condition for the existence of the above limit for all dyadics and, consequently, the existence of $\omega$ on all of $[0,1]$. To this end, let us denote by $\bbT^{(n)}_{\sfr}$ and $\bbT^{(n)}_{\sfl}$ the two connected components of $\bbT^{(n)} \setminus {\bf 0}^{(n)}$. 
These are clearly isomorphic to $\bbT^{(n-1)}$ and so whenever $A$ is a subset of $\bbL^{(n)}_{0,\sfr}$ or $\bbL^{(n)}_{0,\sfl}$, we let $\Phi_{n-1} (A)$ be the value of $\Phi_{n-1}$ on the image of $A$ under such isomorphism.

\begin{prop}
 \label{prop:Phi-FE-A} 
Suppose that for all $n \geq 1$, the clustering function $\Phi_n$ is monotone and non-negative 
and that there exists a non-negative sequence $\lbr \gamma_n\rbr$, satisfying 
\begin{equation}
	\sum_n \gamma_n 2^{-n} <\infty \,,
\end{equation}
such that for all $n \geq 1$ and all $A \subseteq \bbL_{0, \sfr /\sfl}^{(n)}$,
\be{eq:Phi-A} 
\Phi_n (A ) \leq \Phi_{n-1} (A ) + \gamma_n \,.
\ee
Then the canonical free energy $\omega$ is well defined, finite and concave on all dyadic $\epsilon \in [0,1]$ and therefore can be uniquely continuously extended to all of $[0,1]$.
\end{prop}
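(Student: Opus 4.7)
The plan is to establish a near-multiplicative lower bound on $\bbW_n^{\Phi_n}(a_0)$ coming from the split of $\bbT^{(n)}$ into its two subtrees $\bbT^{(n)}_\sfr, \bbT^{(n)}_\sfl$. Given $A \subseteq \bbL_0^{(n)}$, write $A_\sfr := A \cap \bbL^{(n)}_{0, \sfr}$ and $A_\sfl := A \cap \bbL^{(n)}_{0, \sfl}$. Subadditivity~\eqref{eq:MC-b} gives $\Phi_n(A) \leq \Phi_n(A_\sfr) + \Phi_n(A_\sfl)$, and since each of $A_\sfr, A_\sfl$ is supported in one half, the hypothesis~\eqref{eq:Phi-A} applied twice yields
\begin{equation}
\Phi_n(A) \ \leq\ \Phi_{n-1}(A_\sfr) + \Phi_{n-1}(A_\sfl) + 2\gamma_n,
\end{equation}
where $\Phi_{n-1}(A_{\sfr/\sfl})$ is evaluated via the isomorphism $\bbT^{(n)}_{\sfr/\sfl} \cong \bbT^{(n-1)}$. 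Summing $\rme^{-\Phi_n(A)}$ over sets $A$ of size $a_0$, and regrouping by the sizes $a_r = |A_\sfr|$, $a_l = |A_\sfl|$ with $a_r + a_l = a_0$, produces the recursion
\begin{equation}
\label{eq:plan-rec}
\bbW_n^{\Phi_n}(a_0) \ \geq\ \rme^{-2\gamma_n} \sum_{a_r + a_l = a_0} \bbW_{n-1}^{\Phi_{n-1}}(a_r)\, \bbW_{n-1}^{\Phi_{n-1}}(a_l).
\end{equation}

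Next, for dyadic $\epsilon = k/2^m$ and $n \geq m+1$, specialize~\eqref{eq:plan-rec} to the symmetric term $a_r = a_l = \epsilon\, 2^{n-1}$. Taking logarithms and dividing by $2^n$ yields
\begin{equation}
\omega_n^{\Phi_n}(\epsilon) \ \geq\ \omega_{n-1}^{\Phi_{n-1}}(\epsilon) - \frac{2\gamma_n}{2^n},
\end{equation}
so that $n \mapsto \omega_n^{\Phi_n}(\epsilon) + 2\sum_{k=1}^{n} \gamma_k 2^{-k}$ is non-decreasing. The non-negativity of $\Phi_n$ gives the uniform upper bound $\omega_n^{\Phi_n}(\epsilon) \leq 2^{-n} \ln \binom{2^n}{\epsilon 2^n} \leq \ln 2$, and the summability $\sum_k \gamma_k 2^{-k} < \infty$ then forces the monotone adjusted sequence to converge. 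Hence $\omega(\epsilon) := \lim_n \omega_n^{\Phi_n}(\epsilon)$ exists and is finite (in fact in $[0, \ln 2]$) for every dyadic $\epsilon$.

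For concavity, apply~\eqref{eq:plan-rec} with the asymmetric split $a_r = \epsilon_1 2^{n-1}$, $a_l = \epsilon_2 2^{n-1}$ and note $a_r + a_l = \tfrac{\epsilon_1 + \epsilon_2}{2}\cdot 2^n$. Taking logarithms, dividing by $2^n$, and passing to the limit $n \to \infty$ (permissible for all $n$ large enough that all fractions are integers), the $2\gamma_n/2^n$ correction vanishes and one obtains
\begin{equation}
\omega\!\left( \tfrac{\epsilon_1 + \epsilon_2}{2}\right) \ \geq\ \tfrac12 \omega(\epsilon_1) + \tfrac12 \omega(\epsilon_2)
\end{equation}
for every pair of dyadics. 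This is midpoint concavity on the dyadics in $[0,1]$, and together with the uniform bound $0 \leq \omega \leq \ln 2$, it upgrades by a standard argument (midpoint concavity plus local boundedness implies local Lipschitz continuity on the interior) to a unique continuous concave extension to all of $[0,1]$.

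The main obstacle I anticipate is bookkeeping in the derivation of~\eqref{eq:plan-rec}: one must be careful that the sum over $A \subseteq \bbL_0^{(n)}$ with $|A| = a_0$ is exactly reindexed, without double counting, by the pair $(A_\sfr, A_\sfl)$ of subsets of the two halves, and that the isomorphism between $\bbT^{(n)}_{\sfr/\sfl}$ and $\bbT^{(n-1)}$ identifies the inner sums with $\bbW_{n-1}^{\Phi_{n-1}}$ evaluated at the appropriate sizes. The remaining steps (Fekete-type convergence, midpoint concavity, continuous extension) are essentially routine once the recursion is in place; note in particular that the monotonicity property~\eqref{eq:MC-a} is not needed for this proposition, only subadditivity and non-negativity.
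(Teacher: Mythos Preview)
Your proof is correct and follows essentially the same approach as the paper's: split $\bbT^{(n)}$ into its two subtrees, combine subadditivity~\eqref{eq:MC-b} with hypothesis~\eqref{eq:Phi-A} to get the near-multiplicative recursion $\bbW_n \geq \rme^{-2\gamma_n}\bbW_{n-1}\bbW_{n-1}$, use the symmetric split plus a Fekete argument for existence of the limit, and the asymmetric split for midpoint concavity. One small slip: your parenthetical claim that $\omega \in [0,\ln 2]$ overshoots on the lower side --- non-negativity of $\Phi_n$ gives the upper bound $\ln 2$, but $\omega(\epsilon)$ can be negative (e.g.\ $\Phi_n(A)=c|A|$ with $c$ large gives $\omega(\epsilon)=\mathrm{ent}(\epsilon)-c\epsilon$); only finiteness is needed and that follows from your monotone-adjusted-sequence argument together with $\omega_m^{\Phi_m}(\epsilon)>-\infty$ for any single $m$.
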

Henceforth, we will identify the canonical free energy $\omega$ with its continuous extension via the above proposition, whenever its conditions are satisfied. We can now address the existence of the (non-canonical) free energy.
\begin{thm} 
\label{thm:Phi-wet} 
Let $\{\Phi_n\}$ be as in Proposition~\ref{prop:Phi-FE-A}. Then the free energy $\zeta$ is well defined, finite and convex on $(-\infty, \infty)$.
 Furthermore, it is related to the canonical free energy $\omega$ as follows: 
 \be{eq:Phi-psif}
 \zeta (J) = \max_{\epsilon \in [0,1]}\lbr J\epsilon + \omega (\eps )\rbr . 
 \ee
 \end{thm}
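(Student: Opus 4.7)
The plan is a standard Varadhan--Laplace argument, reducing the (non-canonical) free energy $\zeta$ to the canonical one $\omega$, whose existence and continuous extension to $[0,1]$ are granted by Proposition~\ref{prop:Phi-FE-A}. Grouping configurations in~\eqref{eq:Phi-J} by cardinality gives
\[
\zeta_n^{\Phi_n}(J) \;=\; \frac{1}{2^n} \ln \sum_{a_0=0}^{2^n} \exp\Bigl( 2^n\bigl[J(a_0/2^n) + \omega_n^{\Phi_n}(a_0/2^n)\bigr] \Bigr),
\]
and the elementary ``$\max \le \sum \le (2^n+1)\cdot\max$'' bound sandwiches this quantity between $M_n$ and $M_n + \tfrac{\ln(2^n+1)}{2^n}$, where
\[
M_n := \max_{0 \le a_0 \le 2^n}\bigl\{J(a_0/2^n) + \omega_n^{\Phi_n}(a_0/2^n)\bigr\}.
\]
It therefore suffices to prove that $M_n \to M := \max_{\epsilon \in [0,1]}\{J\epsilon + \omega(\epsilon)\}$.

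The lower bound $\liminf_n M_n \ge M$ is immediate: for any dyadic $\epsilon_0 = k_0/2^{n_0}$ and every $n \ge n_0$, the point $\epsilon_0$ lies in the grid over which $M_n$ is taken, so $M_n \ge J\epsilon_0 + \omega_n^{\Phi_n}(\epsilon_0)$. Proposition~\ref{prop:Phi-FE-A} gives $\omega_n^{\Phi_n}(\epsilon_0) \to \omega(\epsilon_0)$, and density of dyadics in $[0,1]$ together with continuity of $\omega$ promote this to $\liminf_n M_n \ge M$.

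The main obstacle is the upper bound $\limsup_n M_n \le M$, which requires controlling $\omega_n^{\Phi_n}$ at \emph{every} grid point $a_0/2^n$, not merely at a single fixed dyadic. The key lever is the approximate superadditivity that underlies Proposition~\ref{prop:Phi-FE-A}: splitting $A \subseteq \bbL_0^{(n)}$ into its projections $A_\sfl, A_\sfr$ on the two subtrees of $\bbT^{(n)} \setminus \{\mathbf{0}^{(n)}\}$, properties (a)--(b) of Definition~\ref{dfn:MC} combined with~\eqref{eq:Phi-A} yield $\Phi_n(A) \le \Phi_{n-1}(A_\sfl) + \Phi_{n-1}(A_\sfr) + 2\gamma_n$. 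Restricting the sum defining $\bbW_n^{\Phi_n}(2a)$ to balanced splits $|A_\sfl| = |A_\sfr| = a$ then gives
\[
\bbW_n^{\Phi_n}(2a) \;\ge\; \rme^{-2\gamma_n}\, \bigl(\bbW_{n-1}^{\Phi_{n-1}}(a)\bigr)^2,
\]
equivalently $\omega_n^{\Phi_n}(\epsilon) \ge \omega_{n-1}^{\Phi_{n-1}}(\epsilon) - \gamma_n/2^{n-1}$ for every dyadic $\epsilon$ of denominator $2^{n-1}$. Iterating this inequality \emph{forward} from level $n$ up to level $N$, and then letting $N \to \infty$ while using the existence of $\omega(\epsilon)$, produces the uniform bound
\[
\omega_n^{\Phi_n}(\epsilon) \;\le\; \omega(\epsilon) + r_n, \qquad r_n := \sum_{k>n} \gamma_k/2^{k-1},
\]
valid for every $\epsilon \in \{0, 1/2^n, \ldots, 1\}$. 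The summability hypothesis of Proposition~\ref{prop:Phi-FE-A} gives $r_n \to 0$, so $M_n \le M + r_n \to M$.

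Combining the two bounds yields $\zeta(J) = M$, which is exactly~\eqref{eq:Phi-psif}. Convexity of $\zeta$ on $\bbR$ is then free, since $J \mapsto J\epsilon + \omega(\epsilon)$ is affine for every fixed $\epsilon$ and $\zeta$ is the pointwise maximum of this family. Finiteness is also immediate: $\omega(0) = 0$ because $\bbW_n^{\Phi_n}(0) = \rme^{-\Phi_n(\emptyset)} = 1$, and $\Phi_n \ge 0$ forces $\bbW_n^{\Phi_n}(a_0) \le \binom{2^n}{a_0}$, so $\omega(\epsilon) \le \ln 2$ throughout $[0,1]$; hence $0 \le \zeta(J) \le |J| + \ln 2$ for every $J \in \bbR$.
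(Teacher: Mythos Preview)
Your proof is correct and follows essentially the same Laplace-principle argument as the paper: the sandwich bound, the dyadic lower bound, and the uniform upper bound $\omega_n^{\Phi_n}(\epsilon) \le \omega(\epsilon) + r_n$ are all identical in substance. The only cosmetic difference is that you rederive this last inequality from scratch, whereas the paper simply cites it as~\eqref{eq:Phi-lb-frf}, already established in the proof of Proposition~\ref{prop:Phi-FE-A}.
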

 A consequence of the above representation is the following characterization of the critical pinning force for the sequence $\{\Phi_n\}$.
\begin{corollary}
\label{c:1.9}
Let $\{\Phi_n\}$ be as in Proposition~\ref{prop:Phi-FE-A}. Then the associated critical pinning force is given by
\be{eq:Phi-f-J} 
J^* = - \frac{\dd^+}{\dd\eps } \omega(0 ) = - \lim_{\eps\downarrow 0} 
\frac{\omega  (\eps )}{\eps}
= - \lim_{\substack\eps\downarrow 0} 
\lim_{n\to\infty} \frac{1}{\epsilon 2^n}\ln 
\bbW_n (\epsilon 2^n ) \,.
\ee
In particular, there is a wetting transition if and only if the right derivative of $\omega$ at $0$ is finite.
\end{corollary}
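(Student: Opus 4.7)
The plan is to apply Theorem~\ref{thm:Phi-wet}, which provides the Legendre-type representation $\zeta(J) = \max_{\epsilon \in [0,1]}\{J\epsilon + \omega(\epsilon)\}$, and then to extract $J^*$ from it by elementary convex-analytic manipulation. The first step is to note that $\omega(0) = 0$, because $\bbW_n(0) = \mathrm{e}^{-\Phi_n(\emptyset)} = 1$ by Definition~\ref{dfn:CF}, so $\omega_n^{\Phi_n}(0) = 0$ for every $n$. Plugging $\epsilon = 0$ into the variational formula yields $\zeta(J) \geq 0$ unconditionally, and $\zeta(J) > 0$ iff there exists $\epsilon \in (0,1]$ with $J\epsilon + \omega(\epsilon) > 0$, equivalently $-J < \omega(\epsilon)/\epsilon$. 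Rearranging the definition of $J^*$ in \eqref{eq:J-star-PC} gives
\[
J^* \;=\; -\sup_{\epsilon\in (0,1]} \frac{\omega(\epsilon)}{\epsilon}.
\]

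The second step is to convert this supremum into the right derivative at $0$. This is where concavity of $\omega$ (supplied by Proposition~\ref{prop:Phi-FE-A}) combined with $\omega(0) = 0$ enters: a standard property of concave functions states that difference quotients at a fixed base point are monotone, so $\epsilon \mapsto \omega(\epsilon)/\epsilon$ is non-increasing on $(0,1]$. Hence the supremum is realised as $\epsilon \downarrow 0$ and equals, by definition, $\tfrac{\dd^+}{\dd\epsilon}\omega(0)$. This yields the first two equalities in \eqref{eq:Phi-f-J}. The third equality is then just an unwinding of the definitions \eqref{eq:Phi-FE}--\eqref{eq:Phi-FE1} of $\omega$; continuity of $\omega$ on $[0,1]$ (also from Proposition~\ref{prop:Phi-FE-A}) justifies restricting the outer limit to dyadic $\epsilon$, where the inner $n\to\infty$ limit defining $\omega(\epsilon)$ is available.

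For the final statement, concavity of $\omega$ together with $\omega(0)=0$ forces the chord inequality $\omega(\epsilon)/\epsilon \geq \omega(1)$, which is finite. Thus $\tfrac{\dd^+}{\dd\epsilon}\omega(0) \in [\omega(1),+\infty]$, so $J^* \in [-\infty, -\omega(1)] \subseteq [-\infty, \infty)$, and a wetting transition ($J^* \in (-\infty,\infty)$) occurs exactly when $\tfrac{\dd^+}{\dd\epsilon}\omega(0) < \infty$.

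No real obstacle arises once Theorem~\ref{thm:Phi-wet} is in hand: the argument is purely one-dimensional convex analysis. The only step worth being careful about is the identification of $\sup_{\epsilon>0}\omega(\epsilon)/\epsilon$ with the right derivative at $0$, which is a one-line consequence of the monotonicity of concave difference quotients and the normalization $\omega(0)=0$.
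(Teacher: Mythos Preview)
Your proof is correct and follows essentially the same approach as the paper: both exploit the variational formula of Theorem~\ref{thm:Phi-wet} together with the concavity of $\omega$ and the normalization $\omega(0)=0$ to identify $J^*$ with minus the right derivative of $\omega$ at $0$. Your version is somewhat more explicit than the paper's terse two-line argument, in particular in addressing the third equality of~\eqref{eq:Phi-f-J} and the final ``in particular'' clause.
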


We note that another conventional way to characterize the scale of the dry-set is via the more probabilistic quantity (below $u \in \bbL_0^{(n)}$ is any leaf),
\begin{equation}
	\rho_n(J) =  \rho^{\Phi}_n(J) := \frac{1}{2^n} \rmE_n^{\Phi, J} |A| = \rmP_n^{\Phi,J}(u \in A)\in [0,1] \,,
\end{equation}
which measures the density of the dry-set. The corresponding asymptotic quantity for a sequence of clustering functions $\{\Phi_n\}$ is given by
\begin{equation}
\label{e:1.18}
	\rho(J) := \lim_{n \to \infty} \rho_n^{\Phi_n}(J)  \,,
\end{equation}
assuming again that the limit exists (otherwise, one may take the limit inferior). Complete wetting can then be defined as the regime when $\rho(J) = 0$, with partial wetting occurring when the latter is positive. 

The link between the two definitions of the wetting regimes is via the relation, 
\begin{equation}
\label{e:1.19}
\zeta_n^\Phi(J) = \int_{-\infty}^J \rho_n^\Phi(J') \rmd J' \,,
\end{equation}
which can be easily verified by differentiating~\eqref{e:1.4}. Nevertheless, it is not clear that one can pass to the limit in the above relation. This is resolved by
\begin{corollary}
\label{c:1.10}
	Let $\{\Phi_n\}$ be as in Proposition~\ref{prop:Phi-FE-A}. Then for all $J$ such that the maximum in~\eqref{eq:Phi-psif} is attained uniquely, the limit in~\eqref{e:1.18} exists and
\begin{equation}
	\rho(J) = \frac{\rmd}{\rmd J} \zeta(J) = \argmax_{\epsilon \in [0,1]}\lbr J\epsilon + \omega (\eps )\rbr . 
\end{equation}
Moreover uniqueness in~\eqref{eq:Phi-psif} holds except for at most countably many values of $J$.
\end{corollary}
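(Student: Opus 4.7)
The plan is to combine two pieces of standard convex analysis: stability of differentiation under pointwise convergence of finite convex functions, and the subdifferential structure of a Legendre transform.

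For every $n$, the map $J \mapsto \zeta_n^{\Phi_n}(J)$ is a finite convex function on $\bbR$. Indeed, by Definition~\ref{dfn:PWC},
\[
\zeta_n^{\Phi_n}(J) = \frac{1}{2^n}\ln \sum_{A \subseteq \bbL_0^{(n)}} \rme^{J|A|-\Phi_n(A)},
\]
which is $2^{-n}$ times the log of a finite sum of exponentials with positive weights; differentiating under the finite sum gives $\tfrac{\rmd}{\rmd J}\zeta_n^{\Phi_n}(J) = \rho_n^{\Phi_n}(J)$, consistent with~\eqref{e:1.19}. By Theorem~\ref{thm:Phi-wet}, $\zeta_n^{\Phi_n} \to \zeta$ pointwise on $\bbR$, and $\zeta$ is itself finite and convex there. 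The classical Rockafellar-type theorem on pointwise convergence of finite convex functions then implies that at every $J$ at which $\zeta$ is differentiable,
\[
\rho_n^{\Phi_n}(J) \;=\; \tfrac{\rmd}{\rmd J}\zeta_n^{\Phi_n}(J) \;\longrightarrow\; \tfrac{\rmd}{\rmd J}\zeta(J) \qquad (n \to \infty),
\]
so that the limit in~\eqref{e:1.18} exists and equals $\zeta'(J)$.

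Next I would identify $\zeta'(J)$ with the argmax in~\eqref{eq:Phi-psif}. By Proposition~\ref{prop:Phi-FE-A} the canonical free energy $\omega$ is concave on $[0,1]$, so extending $-\omega$ by $+\infty$ outside $[0,1]$ produces a proper convex function whose Legendre conjugate is, by~\eqref{eq:Phi-psif}, precisely $\zeta$. Standard convex conjugacy then gives that $\epsilon^\star \in [0,1]$ attains the maximum in~\eqref{eq:Phi-psif} if and only if $\epsilon^\star$ lies in the subdifferential of $\zeta$ at $J$; in particular, $\zeta$ is differentiable at $J$ exactly when that maximizer is unique, and in that case $\zeta'(J)$ is the unique maximizer. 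Combining this with the first step yields the stated chain of equalities. The last assertion of the corollary is then automatic, since any finite convex function on $\bbR$ is differentiable off an at most countable set, which here coincides with the set of $J$ for which the maximum in~\eqref{eq:Phi-psif} is not uniquely attained.

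The only potentially delicate point is the interchange of derivative and limit, but since $\zeta_n^{\Phi_n}$ and $\zeta$ are finite convex functions on all of $\bbR$, pointwise convergence automatically upgrades to uniform convergence on compact subsets, and convergence of derivatives at every differentiability point of the limit is a classical consequence. Thus the proof reduces to bookkeeping around the convexity and Legendre representation already established by Theorem~\ref{thm:Phi-wet}, with no new probabilistic input required.
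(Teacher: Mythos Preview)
Your argument is correct and complete for the stated corollary, but it proceeds along a genuinely different route from the paper's own proof.

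The paper argues directly at the level of the partition function. Fixing $J$ with a unique maximizer $\epsilon_0$, it shows that the contribution to $\sfZ_n^{\Phi_n,J}$ from configurations with $|2^{-n}|A|-\epsilon_0|\geq\delta$ is exponentially smaller than $\sfZ_n^{\Phi_n,J}$ itself (using the uniform lower bound~\eqref{eq:Phi-lb-frf} on $\omega_n$). This yields exponential decay of $\rmP_n^{\Phi_n,J}(|2^{-n}|A|-\epsilon_0|\geq\delta)$, hence convergence of $2^{-n}|A|$ to $\epsilon_0$ in probability and, by boundedness, in $L^1$; in particular $\rho_n\to\epsilon_0$. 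Differentiability of $\zeta$ at $J$ is then verified by hand via a two-sided sandwich on $(\zeta(J+\Delta)-\zeta(J))/\Delta$. The countability claim is obtained by noting that non-uniqueness forces $\omega$ to be affine with slope $-J$ on some subinterval, and these subintervals are pairwise disjoint for different $J$.

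Your approach instead packages everything into two classical convex-analysis facts: convergence of derivatives of pointwise-convergent finite convex functions at differentiability points of the limit, and the identification of the subdifferential of a conjugate with the argmax set. This is shorter and entirely self-contained once Theorem~\ref{thm:Phi-wet} is in hand. What the paper's argument buys beyond the statement of the corollary is the stronger probabilistic conclusion that $2^{-n}|A|$ itself concentrates exponentially around $\epsilon_0$, not merely that its mean converges; your route gives only the latter, which is all the corollary asserts.
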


\subsection{Examples}
\label{s:1.2}
Let us now discuss several examples of PWC measures. The first two examples are analogous to the example in Subsection~\ref{s:1.1}, which was stated in a more general context.
\subsubsection{Pinned random surfaces}
\label{s:Z1.2.1}
For $n \geq 1$, let $f: \bbR^{\bbL^{(n)}_0} \to \bbR_+$ be a density function which is invariant under (the restriction to $\bbL_0^{(n)}$ of) all isomorphisms of $\bbT_n$.
Denote by $\rmP^f_n$ be the probability measure on $\bbR^{\bbL_0^{(n)}}$ whose density w.r.t. the Lebesgue measure is $f$. One should think of $\rmP^f_n$ as being the law of a random surface $h$ indexed by the leaves, so that $h(u)$ is the height at $u \in \bbL_0^{(n)}$. 

For $J \in \bbR$ we introduce {\em delta pinning} at force $J$ by considering the modified law
\begin{equation}
\label{e:O1}
\rmP_n^{f,J} (\rmd h) := \frac{1}{\rmZ_n^{f,J}}
 	f(h) \prod_{x \in \bbL_0^{(n)}} \big( \rmd h_x + {\rm e}^{J} \delta_0 (\rmd h_x) \big) \,,
\end{equation}
with $\rmZ_n^{f,J}$ chosen as to make the above a probability measure. Then, as in~\eqref{e:Z1.4}, expanding the expression on the right hand side of~\eqref{e:O1}, one may rewrite the pinned law $\rmP_n^{f,J}$ as
\begin{equation}
\rmP_n^{f,J} (\rmd h) := \sum_{A \subseteq \bbL_0^{(n)}}
\rme^{J|A|} \frac{Z_n^{f,A}}{\rmZ_n^{f,J}}\, \rmP_n^{f,A}(\rmd h) \,,
\end{equation}
where 
 \begin{equation}
 	\rmP^{f,A}_n(\rmd h) = \frac{f(h)}{Z_n^{f,A}} \prod_{x \in A^\rmc} \rmd h_x \prod_{x \in A} \delta_0(\rmd h_x) 
 	\; , \quad
 	\rmZ_n^{f,A} :=  \int f(h) \prod_{x \in A^\rmc} \rmd h_x \prod_{x \in A} \delta_0(\rmd h_x) \,.
 \end{equation}
In particular,
\begin{equation}
\label{e:1.5}
	\rmP_n^{f,J} \big(\{0\}^A \times (\bbR \setminus \{0\})^{A^\rmc}\big)
	 = \frac{1}{\rmZ_n^{f,J}} \rme^{J|A|} \rmZ_n^{f,A} \,,
\end{equation}
so that the configuration of pinned sites under $\rmP_n^J$ is a PWC measure with clustering function
\begin{equation}
\label{e:1.27}
	\Phi(A) = \Phi^f_n(A) := -\ln (\rmZ_n^{f,A}/\rmZ_n^{f,\emptyset}) 
	\quad ; \qquad A \subseteq \bbL_0^{(n)} \,,
\end{equation}
and pinning force $J$.

A particular case where such field $h$ arises, is when one considers the restriction to $\bbL_n^{(0)}$ of a Gibbs gradient field $h'$ on $\bbT_n$, whose law is given by 
\begin{equation}
\label{e:1.26}
\rmP^V_n(\rmd h) = 
	\frac{1}{\rmZ^V_n} \exp \Big\{ -\sum_{x \sim y} V \big( h_x -  h_y \big) \Big\}
	\prod_{x \in \bbT_n \setminus \{0\}} \rmd  h_x\, \delta_0 (\rmd h_{\bf 0}) \,,
\end{equation}
where the sum is over all neighboring vertices in $\bbT_n$ and $V: \bbR \to \bbR_+$ is a smooth, symmetric and convex function.
When $V(s) = s^2$, the field $h'$ is the discrete Gaussian free field (DGFF) on $\bbT_n$.
For this particular field, it is known the law of pinned sites under the corresponding measure $\rmP^{J,f}_n$ is strong FKG~\cite[Lemma~2.1]{bolthausen2001critical}. 
This shows that the corresponding clustering function, we shall denote in this case by
\begin{equation}
\Phi^V_n \equiv \Phi^f_n \,,
\end{equation}
satisfies the second condition in Definition~\ref{dfn:MC} of monotonicity. However the validity of the first condition is not clear.

Nevertheless, for a general $V$, it can be shown by domination over Bernoulli site percolation (cf.~\cite[Theorem 2.4]{bolthausen2001critical}) that $\rho^\Phi_n(J)$ is uniformly positive for all $J > -\infty$. 
Since the free energy $\zeta$ exists by the strong FKG property of $\rmP^V_n$ (c.f.,~\cite[Lemma 7.8]{funaki}), we may use~\eqref{e:1.19} to conclude that there is no wetting transition in the sense of Definition~\ref{d:1.4}, for any $V$ as above.

\subsubsection{Pinned random surface above a hard wall}
\label{s:1.1.2}

Continuing from the previous example, an interaction of the field $h$  with a, so-called, {\em hard wall} placed above $\bbL_0^{(n)}$ at height zero can be modeled by considering the conditional measure
\begin{equation}
\label{e:1.28}
\rmP_n^{f,+}(\rmd h) := \rmP_n^{f,+}(\rmd h \,|\, \Omega^+)
= \frac{\rmP_n^{f,+}(\rmd h)1_{\Omega^+_n}(h)}{\rmP_n^{f,+}(\Omega^+)}
\quad , \qquad
\Omega^+_n := \bbR_+^{\bbL_0^{(n)}}
\end{equation}
Here, a common consequence of entropy is that the field does not just stay above the wall, but is rather typically pushed away from it. Thus, conditioning introduces a {\em repulsion} force. 
An opposing attraction force can be added, by introducing delta pinning, as before. This leads to considering the law
\begin{equation}
\label{e:O1plus}
\rmP_n^{f,+,J} (\rmd h) := \frac{1}{\rmZ_n^{f,+,J}}
 	f(h) \prod_{x \in \bbL_0^{(n)}} \big( \rmd h_x + {\rm e}^{J} \delta_0 (\rmd h_x) \big) 
 \quad ; \qquad h \in \Omega^+_n
\end{equation}

As before, by expanding~\eqref{e:1.28} we can express the law of dry sites as
\begin{equation}
\label{e:1.29}
\rmP_n^{f,+,J} \big(\{0\}^A \times (\bbR \setminus \{0\})^{A^\rmc}\big)
	 = \frac{1}{\rmZ_n^{f,+,J}} \rme^{J|A|} \rmZ_n^{f,+, A}
\end{equation}
where
\begin{equation}
\rmZ_n^{f,+,A} :=  \int_{\Omega^+} f(h) \prod_{x \in A^\rmc} \rmd h_x \prod_{x \in A} \delta_0(\rmd h_x) \,,
\end{equation}
This law is again a PWC measure, this time with clustering function 
\begin{equation}
\label{e:1.27a}
	\Phi(A) = \Phi^{f,+}_n(A) := -\ln (\rmZ_n^{f,+,A}/\rmZ_n^{f,+,\emptyset})
	\quad ; \qquad A \subseteq \bbL_0^{(n)} \,,
\end{equation}
and pinning force $J$.

When $h$ is the restriction to $\bbL_0^{(n)}$ of a Gibbs-gradient field $h'$ on $\bbT_n$, of the form considered in the previous example, then the corresponding law $\rmP_n^{f,+}$ is strong FKG (c.f.,~\cite[Lemma~3.1]{deuschel1996entropic}). Since having a pinned site is a decreasing event, the corresponding PWC-measure~\eqref{e:1.29} is also FKG. In particular, this implies that the corresponding clustering function, which we shall denote in this case by
\begin{equation}
\label{e:1.32}
\Phi^{V,+}_n \equiv \Phi^{f,+}_n \,,
\end{equation}
 satisfies the second condition in Definition~\ref{dfn:MC} of monotonicity. Again, whether the first condition thereof holds is not clear.

Nevertheless because of FKG, both the free energy $\zeta$ and the limiting density of pinned sites $\rho$ exist (c.f.~\cite[Section 6]{velenik2006localization} and~\cite[Lemma 7.8]{funaki}). It can be shown, by adapting the proofs when the underlying graph is the two-dimension lattice, that whenever $V$ is Gaussian or Lipschitz that there is a wetting transition in the sense of Definition~\ref{d:1.4} (see,~\cite[Section 6]{velenik2006localization}). 

\subsubsection{The zero clustering function}
For a sanity check, let us quickly address the $\Phi \equiv 0$ case. As mentioned before, in this case $\rmP_n^{\Phi,J}$ is the standard Bernoulli site percolation with probability $p(J) = (1+{\rm e}^{-J})^{-1}$ for a site to be dry. In this case, the canonical free energy $\omega(\epsilon)$ is trivially given by the Binomial entropy function
\be{eq:Phi-ent}
\ent{\epsilon}: = \epsilon \ln \lb\frac{1}{\epsilon}\rb  + (1-\epsilon)\ln\lb \frac{1}{1-\epsilon}\rb
\ee
As the right derivative of $\ent{\epsilon}$ at $0$ is $\infty$, there is no wetting transition. This is in agreement with the (asymptotic) density of dry sites being $p(J) > 0$ for all $J > -\infty$.

\medskip

\subsubsection{Clustering via first order branching patterns}
\label{sss:2.3}
Next we study a clustering function which is not obtained as a reduction from another statistical model. In this case, the degree of clustering is determined based on the tree (hierarchal) distances between all leaves in the dry-set $A \subseteq \bbL_0^{(n)}$. More precisely, for each $k \in [1,n]$, let $b_k$ be the number of branching points at level $k$ of the tree. A branching point is a vertex in $\bbT^{(n)}$, which has at least two descendants in $A$, one through each of its direct children. Then the value of $\Phi(A)$ is determined as a linear combination of the branching numbers $(b_k)_{k=1}^n$, with prescribed coefficients $(h_k)_{k=1}^n$ acting as the parameters of the model.

Now, it is not difficult to see that the total number of branching points, namely the sum of all $b_k$-s, is $|A|-1$, so that for sets $A$ with a prescribed cardinality, the $\ell_1$-weight of the branching pattern $(b_k)_{k=1}^n$ is fixed. However, the more this weight is concentrated on $b_k$-s with smaller index $k$, the lower the level of the branching points for $A$ tend to be, resulting in a more clustered set. Therefore, if the parameters $(h_k)_{k=1}^n$ are chosen to be non-decreasing in $k$, then $\Phi(A)	$ will adhere to the intended convention of being smaller for more clustered sets. This is formalized in Proposition~\ref{p:1.6} below, which shows that such $\Phi$ is indeed a monotone clustering function.  Necessary and sufficient conditions on the parameters $(h_k)_{k=1}^n$ for the existence of the free energy and, more importantly, a wetting transition, are then given in Proposition~\ref{p:1.9} and Proposition~\ref{p:1.10} that follow.

We remark that aside from a natural way to quantify clustering, the advantage of using a linear function of the branching pattern to define $\Phi$, is that the conditions for montonicity (Definition~\ref{dfn:MC}), existence of the free energy (Proposition~\ref{prop:Phi-FE-A}) and a wetting transitions (Theorem~\ref{thm:Phi-wet}), from the general theory, could be reduced  to conditions on the parameters $(h_k)_{k=0}^n$ of the model at hand. The latter are much easier to check. Moreover, such results for this model and also for its generalized version in the next example, could be used together with comparison arguments to imply similar statements for PWC-measures which arise from random-surfaces -- of the kind in Subsections~\ref{s:Z1.2.1} and~\ref{s:1.1.2}. See Subsection~\ref{s:1.4} for a more thorough discussion of this.

Proceeding formally, the set of {\em branching points} corresponding to $A$ is defined as
 \be{eq:BP}
  \calB (A) = \lbr u\wedge v~\middle\vert~ u,v\in A\rbr \,.
 \ee
Observe that $A \subseteq \calB(A) \subseteq \cT(A)$. The restriction to branching points at level 
$k \in [0,n]$ is given by
\begin{equation}
\calB_{k}(A) = \calB (A) \cap\bbL_k \,,
\end{equation}
with $b_k := |\calB_{k}|$ denoting their number. The {\em first order branching pattern} corresponding to $A \subseteq \bbL_0^{(n)}$ is the sequence
\begin{equation}
	\ul{b}(A) := (b_k)_{k=0}^n \,.
\end{equation}
We can now state,
\begin{dfn}
\label{d:1.11}
Let $n \geq 1$ and $h$, $h_k$ for $k \in [0,n]$ be real numbers. 
Then the function $\Phi \equiv \Phi^{\{h_k\},h}_n: 2^{\bbL_0^{(n)}} \to \bbR$, defined by setting $\Phi(\emptyset) = 0$ and for each $A \subseteq \bbL_0^{(n)} \setminus \{\emptyset\}$ via 
\begin{align} 
\label{eq:BP-ex1} 
\Phi(A) := \sum_{k=0}^n h_k b_{k} + h 
\quad ; \qquad \ul{b}(A) = (b_k)_{k=0}^n \,,
\end{align}
is called a first order branching clustering function with parameters $(h_k)_{k=0}^n$ and $h$.
\end{dfn}

As, clearly, $\ul{b}(A) = \ul{b}(A')$ for $A \sim A'$, the function $\Phi$ is clustering.
The next proposition shows that it is also monotone, provided the sequence $(h_k)_{k \geq 1}$ is such.
\begin{prop}
\label{p:1.6}	
Suppose that $(h_k)_{k=0}^n$ is non-decreasing and that $h \geq h_n$, then the corresponding 
first order branching clustering function $\Phi$ is monotone.
\end{prop}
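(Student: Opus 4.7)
The plan is to rewrite the branching counts $b_k$ in terms of the size of the ``projection'' $A_k := \{v|_k : v \in A\}$ of $A$ to level $k$, and then reduce both parts of monotonicity to statements about the sequences $|A_k|$, which can be handled by Abel summation.

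\emph{Step 1 (key identity).} First I would establish that $b_0(A) = |A|$ and
\begin{equation*}
b_k(A) \;=\; |A_{k-1}| - |A_k|, \qquad k \geq 1,
\end{equation*}
for any non-empty $A$. Indeed, every $w \in A_k$ has either one or two of its children in $A_{k-1}$, and those with two children are precisely the strict branching points of $A$ at level $k$; counting children of $A_k$ in $A_{k-1}$ in two ways yields $|A_{k-1}| = 2 b_k(A) + (|A_k| - b_k(A))$, giving the identity. As a consequence $\sum_{k=0}^n b_k(A) = 2|A|-1$, and for $j \geq 1$
\begin{equation*}
\sum_{k \geq j} b_k(A) \;=\; |A_{j-1}| - 1.
\end{equation*}

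\emph{Step 2 (part (a)).} Let $A \prec B$ with $|A|=|B| \geq 1$, via a bijection $\sigma:A\to B$. The main observation is that for any $u,v \in A$,
\begin{equation*}
\sigma_u|_k = \sigma_v|_k \;\Longleftrightarrow\; |\sigma_u \wedge \sigma_v| \leq k \;\Longrightarrow\; |u \wedge v| \leq k \;\Longleftrightarrow\; u|_k = v|_k,
\end{equation*}
so $\sigma$ descends to a surjection from the equivalence classes of ``same ancestor at level $k$'' on $B$ onto those on $A$. Hence $|A_k| \leq |B_k|$ for every $k \geq 0$. Together with $\sum_k b_k(A) = \sum_k b_k(B)$, Abel summation against the non-decreasing sequence $(h_k)$ then gives $\sum_k h_k b_k(A) \leq \sum_k h_k b_k(B)$, i.e.\ $\Phi(A) \leq \Phi(B)$.

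\emph{Step 3 (part (b)).} Assume $A,B$ are both non-empty and disjoint; the empty case is immediate. Using $(A \cup B)_k = A_k \cup B_k$, the identity from Step~1 and inclusion--exclusion give
\begin{equation*}
d_k := b_k(A \cup B) - b_k(A) - b_k(B) = c_k - c_{k-1}, \qquad k \geq 1,
\end{equation*}
where $c_k := |A_k \cap B_k|$, while $d_0 = -|A \cap B| = 0$. Since $A,B$ are non-empty, $c_0 = 0$ and $c_n = 1$. Abel summation yields
\begin{equation*}
\sum_{k=0}^n h_k d_k \;=\; h_n c_n + \sum_{k=1}^{n-1} (h_k - h_{k+1}) c_k \;\leq\; h_n \;\leq\; h,
\end{equation*}
using that $h_{k+1} \geq h_k$ and $c_k \geq 0$. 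Recalling that $\Phi(A \cup B) - \Phi(A) - \Phi(B) = \sum_k h_k d_k - h$, this gives $\Phi(A \cup B) \leq \Phi(A) + \Phi(B)$.

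\emph{Expected difficulty.} The purely combinatorial identities in Step~1 are routine once one thinks in terms of the projections $A_k$. The main subtlety I anticipate is in Step~3: the sequence $c_k = |A_k \cap B_k|$ need not be monotone in $k$ (two siblings in $A_{k-1} \cap B_{k-1}$ can collapse to a single parent in $A_k \cap B_k$), so $d_k$ can be negative; this is precisely why the hypothesis $h \geq h_n$ is needed to absorb the boundary term $h_n c_n = h_n$ in the Abel sum. Identifying $c_k$ as the right object to telescope, and recognizing the role of the constant $h$, is the conceptual heart of the argument.
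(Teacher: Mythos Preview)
Your proof is correct and takes a genuinely different, cleaner route than the paper's.

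For part~(a), the paper also reduces to an Abel-summation identity (writing $\Phi$ in terms of the partial sums $\beta_k=\sum_{j\le k}b_j$), but then proves the key inequality $\beta_k(A)\ge\beta_k(B)$ by induction on the cardinality of $A$, removing a leaf of minimal $m_i$ at each step. Your observation that $\beta_k$ is an affine function of $|A_k|$ (indeed $\beta_k=2|A|-|A_k|$) and that $|A_k|\le|B_k|$ follows directly from the defining inequality of $A\prec B$ via the surjection $B_k\to A_k$ is a neat shortcut that avoids the induction entirely.

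For part~(b), the difference is more pronounced. The paper argues at the level of individual branching points: it studies $\mathcal B_{AB}=\mathcal B(A)\cap\mathcal B(B)$ and $\mathcal B_{A\Delta B}=\mathcal B(A\cup B)\setminus(\mathcal B(A)\cup\mathcal B(B))$, proves four structural lemmas showing these form the inner vertices and leaves of an auxiliary binary tree, and then constructs by hand a height-increasing matching $\tau:\mathcal B_{A\Delta B}\to\mathcal B_{AB}\cup\{t\}$. Your approach bypasses all of this: the identity $b_k(A\cup B)-b_k(A)-b_k(B)=c_k-c_{k-1}$ with $c_k=|A_k\cap B_k|$ telescopes the whole discrepancy into a boundary term, and a single Abel summation against the non-decreasing $(h_k)$ gives the bound $h_n\le h$ directly. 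This is considerably shorter and makes the role of the hypothesis $h\ge h_n$ transparent.

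The paper's more structural argument does have a payoff elsewhere: the matching $\tau$ it constructs is reused verbatim in the proof of the analogous monotonicity statement for second-order branching clustering functions (Proposition~\ref{p:1.7}), where one needs pointwise control over branching ancestors rather than just level-by-level counts. Your projection method would not extend to that setting without further work.
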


For the remainder of this sub-section we assume that $(h_k)_{k=0}^\infty$ is a given sequence of real numbers and for each $n \geq 1$, we let $\Phi_n \equiv \Phi_n^{\{h_k\}, h_n} $ be a first order branching clustering function with parameters $(h_k)_{k=0}^n$ and $h = h_n$. 
Turning to the question of wetting, we have

\begin{prop} 
\label{p:1.8}
Suppose that $(h_k)_{k=0}^\infty$ is non-decreasing. Then, the canonical free energy $\omega$ associated with $\{\Phi_n\}$ exists if and only if,
\be{eq:Lev1Exist} 
\sum_{k} 2^{-k} h_k  <\infty . 
\ee
\end{prop}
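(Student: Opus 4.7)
The plan is to establish both implications rather directly: the ``if'' direction by verifying the hypotheses of Proposition~\ref{prop:Phi-FE-A}, and the ``only if'' direction by a single explicit evaluation at the dyadic $\epsilon = 1$.

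For the ``if'' direction, assume $\sum_k 2^{-k} h_k < \infty$. Monotonicity of each $\Phi_n$ is immediate from Proposition~\ref{p:1.6}, since $(h_k)$ is non-decreasing and the choice $h = h_n$ satisfies $h \geq h_n$. Non-negativity of $\Phi_n$ can be arranged at no cost by replacing $h_k$ with $h_k + c$ for $c := \max(-h_0, 0)$: the shifted sequence is still non-decreasing and non-negative, and the clustering function changes by $2c|A|$, so the canonical free energy is modified only by a continuous linear function of $\epsilon$; in particular existence of $\omega$ and summability of $\sum 2^{-k} h_k$ are both preserved. For the key half-subtree inequality, observe that if $A \subseteq \bbL_{0,\sfr}^{(n)}$ (or the analogous left set), then $\mathbf{0}^{(n)} \notin \calB(A)$, so $b_n(A) = 0$ in $\bbT^{(n)}$ while all lower branching counts agree with those in $\bbT^{(n-1)}$ under the canonical isomorphism. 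Hence $\Phi_n(A) - \Phi_{n-1}(A) = h_n - h_{n-1}$, and one may take $\gamma_n := h_n - h_{n-1} \geq 0$. A direct Abel-type computation,
\begin{equation*}
\sum_{n=1}^{N} 2^{-n}(h_n - h_{n-1}) \;=\; \tfrac{1}{2}\sum_{n=1}^{N-1} 2^{-n} h_n \,+\, 2^{-N} h_N \,-\, \tfrac{1}{2} h_0,
\end{equation*}
then shows that $\sum_n 2^{-n}\gamma_n < \infty$ is equivalent to $\sum_n 2^{-n} h_n < \infty$ (noting that $h_0$ is a fixed finite number, and that the latter condition forces $2^{-N} h_N \to 0$).

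For the ``only if'' direction, I would argue contrapositively and evaluate $\omega$ at the dyadic $\epsilon = 1$. The unique subset of $\bbL_0^{(n)}$ of cardinality $2^n$ is $A = \bbL_0^{(n)}$ itself, for which $b_k(A) = 2^{n-k}$ at each level $k \in [0,n]$. Consequently,
\begin{equation*}
\Phi_n(\bbL_0^{(n)}) \;=\; \sum_{k=0}^n h_k\, 2^{n-k} + h_n \;=\; 2^n \sum_{k=0}^n 2^{-k} h_k + h_n,
\end{equation*}
giving $\omega_n^{\Phi_n}(1) = -\sum_{k=0}^n 2^{-k} h_k - h_n/2^n$. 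Since $(h_k)$ is non-decreasing, $h_n/2^n$ is bounded below by $h_0/2^n \to 0$; if $\sum 2^{-k} h_k = \infty$, the partial sums $\sum_{k=0}^n 2^{-k} h_k$ diverge to $+\infty$, forcing $\omega_n^{\Phi_n}(1) \to -\infty$ and precluding the existence of $\omega(1)$ as a finite number. The only mild bookkeeping to watch is confirming that the additive-shift reduction used to secure non-negativity in the first part does not alter existence of the limit; this is elementary, and I do not anticipate other substantive obstacles.
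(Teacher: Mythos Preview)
Your proof is correct. The ``if'' direction follows the same route as the paper --- verify the hypotheses of Proposition~\ref{prop:Phi-FE-A} --- though you take $\gamma_n = h_n - h_{n-1}$ where the paper simply takes $\gamma_n = h_n$ (implicitly assuming $h_k \ge 0$, which you handle more carefully via the additive shift). For the ``only if'' direction your argument is genuinely simpler than the paper's: the paper bounds $\Phi_n(A)$ from below over \emph{all} sets of a given cardinality $2^{n-m_0}$ (by pushing all branchings to the leaves) and combines this with the entropy count to conclude $\omega(2^{-m_0}) = -\infty$ for every $m_0$, whereas you observe that at the dyadic $\epsilon = 1$ there is a \emph{unique} set, so $\omega_n^{\Phi_n}(1)$ can be computed exactly and seen to diverge. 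Your route is more economical; the paper's extra work buys the stronger (but here unnecessary) conclusion that $\omega(\epsilon) = -\infty$ at every dyadic $\epsilon \in (0,1]$, not just at $\epsilon = 1$.
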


Next we give sufficient and necessary conditions on the sequence $\{h_k\}$ for the existence of a wetting transition. 
\begin{prop}
\label{p:1.9}
Suppose that $(h_k)_{k=0}^\infty$ is non-decreasing and that~\eqref{eq:Lev1Exist} holds. Then there is a wetting transition at 
$J^* \in \lb  2 \ln 2 + h_0 - \ln \kappa_1 ,\, \infty\rb$
if 
\begin{equation}
\label{e:kappa1}
	\kappa_1 := \sum_{k=1}^\infty 2^k\rme^{-h_k}  < \infty \,,
\end{equation}
\end{prop}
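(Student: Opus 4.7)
The plan is to use Corollary~\ref{c:1.9} to reduce the proposition to two separate claims: that $\zeta(J) = 0$ for all $J$ below $J_c := 2\ln 2 + h_0 - \ln \kappa_1$, and that $\zeta(J) > 0$ for $J$ large enough, so that $J^* \in (J_c, \infty)$. Both will be derived from a recursive decomposition of the partition function $\sfZ_n := \sfZ_n^{\Phi_n, J}$ obtained by splitting a dry-set $A \subseteq \bbL_0^{(n)}$ into $A_\sfl \subseteq \bbL_{0,\sfl}^{(n)}$ and $A_\sfr \subseteq \bbL_{0,\sfr}^{(n)}$ sitting in the two depth-$(n-1)$ subtrees of $\bbT^{(n)}$.

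First I derive the recursion. By the additivity of the branching pattern (the branching points of $A$ are those of $A_\sfl$ and $A_\sfr$, plus the root of $\bbT^{(n)}$ exactly when both sides are non-empty) and the shift of the constant from $h_{n-1}$ to $h_n$, one finds $\Phi_n(A_\sfl \cup A_\sfr) = \Phi_{n-1}(A_\sfl) + \Phi_{n-1}(A_\sfr) + 2(h_n - h_{n-1})$ in the two-sided case, and $\Phi_n(A_\sfl) = \Phi_{n-1}(A_\sfl) + (h_n - h_{n-1})$ when only $A_\sfl$ is populated. Summing these contributions produces the clean recursion
\begin{equation*}
\sfZ_n = \bigl(1 + e^{h_{n-1} - h_n}(\sfZ_{n-1} - 1)\bigr)^2, \qquad \sfZ_0 = 1 + e^{J - 2 h_0}.
\end{equation*}

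The main step is to extract the correct order of growth via the substitution $u_n := (\sfZ_n - 1)\, e^{h_n - h_0}/2^n$, which reduces the recursion to the Riccati-type form $u_n = u_{n-1}(1 + c_n u_{n-1})$, with $c_n := 2^{n-2} e^{h_0 - h_n}$ and initial condition $u_0 = e^{J - 2h_0}$. The decisive identity
\begin{equation*}
\frac{1}{u_{n-1}} - \frac{1}{u_n} = \frac{c_n}{1 + c_n u_{n-1}} \leq c_n
\end{equation*}
then telescopes to $1/u_n \geq 1/u_0 - \sum_{k \geq 1} c_k = e^{2h_0 - J} - e^{h_0}\kappa_1/4$. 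When $J < J_c$, this lower bound is strictly positive, so $u_n$ is uniformly bounded in $n$, and hence $\sfZ_n - 1 = 2^n e^{h_0 - h_n} u_n$ is bounded (using $2^n e^{-h_n} \to 0$, which follows from $\kappa_1 < \infty$). Consequently $\zeta(J) = 0$ for every $J < J_c$, and by continuity of $\zeta$ also $\zeta(J_c) = 0$, yielding $J^* \geq J_c$. The strict inequality $J^* > J_c$ is then obtained by exploiting the strict inequality $c_n/(1 + c_n u_{n-1}) < c_n$ whenever $u_{n-1} > 0$, which provides genuine slack at the boundary and allows one to propagate the uniform bound a little past $J_c$.

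Finally, for $J^* < \infty$, I take the full set $A = \bbL_0^{(n)}$, whose branching pattern is $b_k = 2^{n-k}$ for $0 \leq k \leq n$, giving $\Phi_n(\bbL_0^{(n)}) = 2^n C_n + h_n$ with $C_n := \sum_{k=0}^n h_k 2^{-k}$. By Proposition~\ref{p:1.8}, $C_n$ converges to the finite limit $C := \sum_{k \geq 0} h_k 2^{-k}$, and $h_n/2^n \to 0$, so $\sfZ_n \geq e^{2^n(J - C_n) - h_n}$ implies $\zeta(J) \geq J - C$. Hence $J^* \leq C < \infty$. The hard part will be the sharp telescoping analysis of the Riccati recurrence: it is precisely the balance between $\sum c_k = e^{h_0}\kappa_1/4$ and $1/u_0 = e^{2h_0 - J}$ that produces the exact threshold $J_c = 2\ln 2 + h_0 - \ln \kappa_1$, and converting the non-strict bound into the strict inequality $J^* > J_c$ requires care in handling the boundary case.
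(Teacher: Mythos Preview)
Your recursion and Riccati-type analysis are correct: the identity $\sfZ_n=(1+e^{h_{n-1}-h_n}(\sfZ_{n-1}-1))^2$ with $\sfZ_0=1+e^{J-2h_0}$ is right, the substitution $u_n=(\sfZ_n-1)e^{h_n-h_0}/2^n$ gives $u_n=u_{n-1}(1+c_n u_{n-1})$ with $c_n=2^{n-2}e^{h_0-h_n}$, and the telescoping bound $1/u_n\geq 1/u_0-\sum_k c_k$ yields boundedness of $u_n$ (hence $\zeta(J)=0$) for $J<J_c$. Combined with your full-set argument for $J^*<\infty$, this establishes the content of the proposition.

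This is a \emph{genuinely different route} from the paper's. The paper never writes a recursion for $\sfZ_n$; instead it expands the canonical partition function $\bbW_n(a_0)$ over first-order branching patterns via the entropy formula $\ent{\ul b}=\prod_k\binom{a_k}{b_k}2^{a_k-b_k}$, uses summation by parts and the crude bound $\prod_k\binom{a_k}{b_k}\leq a_0!/\prod_k b_k!$, applies the multinomial theorem to get $\bbW_n(a_0)\leq 2^{n+2}e^{-a_0(2\ln 2+h_0-\ln\kappa_1)}$, and concludes via Corollary~\ref{c:1.9}. Your approach is more elementary and self-contained --- it bypasses the canonical free-energy machinery entirely and gives exact control of $\sfZ_n$. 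The paper's approach, on the other hand, sits inside the general $\omega$--$\zeta$ framework and is what generalizes to the second-order case in Section~\ref{s:3}.

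One caveat: your sketch for the \emph{strict} inequality $J^*>J_c$ (``slack at each step allows one to propagate a little past $J_c$'') is not a proof; pointwise slack in a telescoping sum does not automatically buy room in the parameter. Note, however, that the paper's own proof also only delivers $J^*\geq J_c$ (and in fact omits the $J^*<\infty$ argument that you do supply), so the open left endpoint in the statement appears to be informal rather than something either argument actually establishes.
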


Turning to the necessary condition, henceforth
we denote the Laplace Transform of sequence $(v_n)_{v \geq 0}$ by $\hat{v}$. That is,
\begin{equation}
	\hat{v}(s) = \sum_{n=0}^\infty e^{-ns} v_n
	\ , \ \ s \geq 0 \,,
\end{equation}
whenever the series converges absolutely. We shall also write $v^+$ for the sequence $(v_n \vee 0)_{n \geq 0}$.

\begin{prop}
\label{p:1.10}
Assume that $(h_k)_{k=0}^\infty$ is non-decreasing and that~\eqref{eq:Lev1Exist} holds. Then there is no wetting transition if
\begin{equation}
\label{e:sufficient lim b1}
	\lim_{s \downarrow 0} \big(\ln \frac1s - s\wh{g^+}(s)\big) = \infty \,,
\end{equation}
where $(g_k)_{k=0}^\infty$ is defined via,
\begin{equation}
\label{e:gk b1}
g_k := h_k - (\ln 2) k  
\quad ,\quad   k \geq 0 \,.
\end{equation}
\end{prop}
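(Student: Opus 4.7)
The plan is to invoke Corollary~\ref{c:1.9}, which identifies ``no wetting transition'' with $\omega(\epsilon)/\epsilon \to \infty$ as $\epsilon \downarrow 0$. Since $\omega$ is concave on $[0,1]$ with $\omega(0) = 0$, this is equivalent via Legendre duality to having
\[
\omega^*(\lambda) := \sup_{\epsilon \in [0,1]}\bigl\{\omega(\epsilon) - \lambda \epsilon\bigr\} > 0 \quad \text{for every } \lambda > 0.
\]
To access $\omega^*$, I would first derive a recursion for the ``$\rme^{-h_n}$-stripped'' generating function
\[
F_n(z) := \sum_{a \geq 1} \tilde{\bbW}_n(a)\, z^a, \qquad \tilde{\bbW}_n(a) := \sum_{|A|=a} \exp\bigl(-\sum_k h_k\, b_k(A)\bigr).
\]
Splitting any non-empty $A \subseteq \bbL_0^{(n)}$ according to the two subtrees at the root of $\bbT^{(n)}$, and noting that the root contributes $\rme^{-h_n}$ to $\rme^{-\sum h_k b_k}$ iff $A$ has leaves in both subtrees, one obtains
\[
F_n(z) = 2 F_{n-1}(z) + \rme^{-h_n} F_{n-1}(z)^2, \qquad F_0(z) = \rme^{-h_0}\,z.
\]
A Laplace/Varadhan argument, using $h_n/2^n \to 0$ from~\eqref{eq:Lev1Exist} and the existence of $\omega$ granted by Proposition~\ref{p:1.8}, then identifies $\omega^*(\lambda) = \lim_{n \to \infty} 2^{-n}\ln F_n(\rme^{-\lambda})$.

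Fix $\lambda > 0$ and set $y_n := F_n(\rme^{-\lambda})/2^n$, so that the recursion becomes $y_n = y_{n-1}(1 + \beta_n y_{n-1})$ with $\beta_n := 2^{n-2}\rme^{-h_n} = \tfrac14\rme^{-g_n}$ and $y_0 = \rme^{-h_0-\lambda}$. I would analyze this positive non-decreasing sequence via its reciprocal $u_n := 1/y_n$, which satisfies
\[
u_{n-1} - u_n \;=\; \frac{\beta_n\, u_{n-1}}{u_{n-1}+\beta_n} \;\in\; \bigl[\tfrac12 \beta_n,\; \beta_n\bigr] \quad \text{while } u_{n-1} \geq \beta_n
\]
(the ``small regime''). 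Summing the lower bound, $u_n \leq \rme^{h_0+\lambda} - \tfrac18 \sum_{k \leq n}\rme^{-g_k}$, so the small regime must end at a finite time $n_*(\lambda)$ as soon as $\sum_k \rme^{-g_k} = \infty$. Once $u_{n_*} < \beta_{n_*+1}$, a direct examination of one step yields $y_{n_*+1} \geq 2 y_{n_*} > 8\,\rme^{g_{n_*+1}}$, which locks $y_n$ in the ``large regime'' $\beta_{n+1} y_n \geq 1$ thereafter; in that regime $y_n \geq \beta_n y_{n-1}^2$ and hence $\ln y_n \geq 2\ln y_{n-1} - g_n - 2\ln 2$. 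Telescoping and rearranging yields
\[
\omega^*(\lambda) \;\geq\; \frac{\ln 2}{2^{n_*+1}} \;-\; \sum_{j \geq 1}\frac{g_{n_*+1+j} - g_{n_*+1}}{2^{n_*+1+j}},
\]
and the $g$-tail is strictly smaller than the slack $\ln 2/2^{n_*+1}$ because~\eqref{e:sufficient lim b1} precludes super-logarithmic growth of $g$ (so the $g$-tail is $o(2^{-n_*})$), making $\omega^*(\lambda)$ strictly positive.

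\emph{The main obstacle} is to pass from condition~\eqref{e:sufficient lim b1} to the divergence $\sum_k \rme^{-g_k} = \infty$ used in the small-regime analysis. I would argue by contraposition. Assuming $\sum_k \rme^{-g_k} < \infty$, one has $g_k \to \infty$; using that $(h_k)$ is non-decreasing, a standard comparison then extracts $g_k - \ln k \to \infty$ (otherwise $\rme^{-g_k} \geq c/k$ on a subsequence would prevent summability). Hence for every $M > 0$, eventually $g_k^+ \geq \ln k + M$, and combining the classical Abelian asymptotics $\sum_k \rme^{-ks}\ln k \sim (\ln(1/s) - \gamma)/s$ with $\sum_k \rme^{-ks} \sim 1/s$ as $s \downarrow 0$ gives
\[
s\,\wh{g^+}(s) \;\geq\; \ln(1/s) + M - \gamma + o(1).
\]
Consequently $\ln(1/s) - s\,\wh{g^+}(s) \leq \gamma - M + o(1)$, and since $M$ was arbitrary this cannot tend to $+\infty$, contradicting~\eqref{e:sufficient lim b1}. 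This Tauberian-type conversion of the analytic hypothesis into the combinatorial divergence is where the main technical work sits.
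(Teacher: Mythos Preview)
Your recursive approach via the generating function $F_n$ is genuinely different from the paper's, which instead proves a key Lemma~\ref{l:3.5} by constructing an explicit one-parameter family of branching patterns (roughly: branch with probability $t$ up to generation $j$, then fully branch) and directly lower-bounds $-J^*$ by $\ln(1/s) - (s+O(s^2))\hat g(s) - C$; the proposition then follows in two lines. Your route could in principle work, but there is a real gap in the final step that you do not flag.

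The claim that ``\eqref{e:sufficient lim b1} precludes super-logarithmic growth of $g$, so the $g$-tail is $o(2^{-n_*})$'' is false as stated. Condition~\eqref{e:sufficient lim b1} is perfectly compatible with $g$ being \emph{bounded} but oscillating: take $g_k$ periodic with period $2K$, rising by $\ln 2$ per step for $K$ steps and then falling by $\ln 2$ per step for $K$ steps (so $(h_k)$ is non-decreasing). Then $s\hat g(s)\to(\ln 2)K/2$, a finite constant, so~\eqref{e:sufficient lim b1} holds; yet if $n_*+1$ lands at a trough one has $g_{n_*+1}=0$ and $g_{n_*+1+j}=(\ln 2)j$ for $j\le K$, giving $\sum_{j\ge 1}(g_{n_*+1+j}-g_{n_*+1})2^{-j}\approx 2\ln 2$, which exceeds your slack $\ln 2$. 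So your telescoped lower bound on $\omega^*(\lambda)$ can be \emph{negative}, and the argument as written does not conclude. A repair would require controlling \emph{where} the small regime exits and iterating the two bounds $y_n\ge y_{n-1}$ and $y_n\ge\beta_n y_{n-1}^2$ more carefully, rather than using only the second.

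Separately, in what you call the main obstacle, the sentence ``otherwise $e^{-g_k}\ge c/k$ on a subsequence would prevent summability'' is not a valid argument: a subsequence bound $e^{-g_{k_j}}\ge c/k_j$ does not by itself force divergence, and the one-sided Lipschitz constraint $g_{k+1}\ge g_k-\ln 2$ does not upgrade it to $g_k-\ln k\to\infty$ pointwise (one can build $g$ with $\sum e^{-g_k}<\infty$ yet $g_{k_j}=\ln k_j$ along a sparse sequence). The correct route here is via averages: Jensen gives $e^{-\bar g(n)}\le n^{-1}\sum_{k<n}e^{-g_k}\le C/n$, so $\bar g(n)\ge\ln n-\ln C$, and an Abel summation then yields $s\hat g(s)\ge\ln(1/s)-O(1)$, contradicting~\eqref{e:sufficient lim b1}.
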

An immediate consequence of the above is the next corollary which follows by a standard Tauberian argument. It gives an almost complementary condition to that in Proposition~\ref{p:1.9}.
\begin{corollary}
\label{c:1.11}
Assume that $(h_k)_{k=0}^\infty$ is a non-decreasing sequence and that~\eqref{eq:Lev1Exist} holds. Then there is no wetting transition if
\begin{equation}
\lim_{k \to \infty} k2^k\rme^{-h_k} = \infty \,,
\end{equation}
or equivalently, if
\begin{equation}
\label{eq:B1-wetting-necessary-general-main}
\lim_{k \to \infty} (\ln 2) k+ \ln k -h_k = \infty 
\,.
\end{equation}
\end{corollary}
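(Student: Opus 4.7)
The equivalence of the two stated conditions is immediate: $\ln(k 2^k \rme^{-h_k}) = \ln k + k\ln 2 - h_k$, so the first limit equals $\infty$ iff the second does. Using the definition $g_k = h_k - (\ln 2) k$, the hypothesis becomes $\ln k - g_k \to \infty$, and by Proposition~\ref{p:1.10} it suffices to show that $\ln(1/s) - s\wh{g^+}(s) \to \infty$ as $s \downarrow 0$.

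The plan is to dominate $s\wh{g^+}(s)$, for small $s$, by $s\sum_{k \geq 1}\rme^{-ks} \ln k$ minus a gap of arbitrary size $M$. Fix $M > 0$ and choose $N = N_M$ large enough that $g_k \leq \ln k - M$ and $\ln k \geq M$ for all $k \geq N$; then $g_k^+ \leq \ln k - M$ on $\{k \geq N\}$, and
\[
s\wh{g^+}(s) \leq s\sum_{k < N} g_k^+ + s\sum_{k \geq N}\rme^{-ks}\ln k - M s\sum_{k \geq N}\rme^{-ks}.
\]
The first term is $O(s)$ (with $N$ fixed), the middle term equals $s\sum_{k \geq 1}\rme^{-ks}\ln k + O(s)$, and the last converges to $M$ as $s \downarrow 0$.

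The one analytic input is the Abelian asymptotic
\[
s\sum_{k \geq 1}\rme^{-ks}\ln k = \ln(1/s) - \gamma + o(1) \quad \text{as } s \downarrow 0,
\]
with $\gamma$ the Euler--Mascheroni constant, which follows from the elementary Laplace identity $\int_0^\infty \rme^{-sx}\ln x\, \dd x = -(\gamma + \ln s)/s$ together with a standard sum-to-integral comparison using monotonicity of $x \mapsto \rme^{-sx}\ln x$ on $[1/s, \infty)$. Plugging this in yields $\ln(1/s) - s\wh{g^+}(s) \geq M + \gamma - o(1)$, and since $M$ was arbitrary, the left-hand side tends to $+\infty$, as required. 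The main (and essentially only) step requiring care is the Abelian asymptotic above, but this is precisely the standard Tauberian input alluded to in the statement, so no substantive obstacle is anticipated; the one small bookkeeping point is ensuring the lower cutoff $N_M$ is taken so that $\ln k \geq M$ on $\{k \geq N_M\}$, which lets us replace $(\ln k - M)^+$ by $\ln k - M$ cleanly in the splitting.
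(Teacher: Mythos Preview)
Your proof is correct and follows essentially the same route as the paper's: both reduce to Proposition~\ref{p:1.10}, invoke the Abelian asymptotic $s\sum_{k\ge 1}\rme^{-ks}\ln k=\ln(1/s)+O(1)$, and then use $\ln k-g_k\to\infty$ to push $\ln(1/s)-s\wh{g^+}(s)$ to infinity. The only difference is packaging: the paper cites a general Abelian-type inequality (Widder) to get $\liminf_{s\downarrow 0} s\hat u(s)\gtrsim \liminf_k u_k$ for $u_k=\ln(k\vee 1)-g_k$, whereas you reprove this inline by splitting the sum at a finite $N_M$---your version is slightly more self-contained but otherwise identical in substance.
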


\subsubsection{Clustering via second order branching patterns}
\label{s:1.2.4}
Next we generalize the previous example, by considering two levels of branching. Here $\Phi$ will be a linear function of the second-order branching numbers $b_{k,\ell}$ where $1 \leq \ell < k \leq n$. The value of $b_{k,\ell}$ is the number of branching points at level $k$, such that their youngest ancestor in the tree, which is also a branching point,  is at level $l$. This is a refined prism for the branching structure of the induced tree $\cT(A)$ of the dry-set $A$, and as such a refined measure of the clustering of $A$. As before, higher concentration of the $\ell_1$-weight of the second-order branching pattern $(h_{k,\ell})_{1 \leq \ell < k \leq n}$ on lower indices, implies more clustering.

In analog to the previous example, we show that monotonicity, with respect to lexicographic order of the indices, of the linear coefficients $(h_{k,\ell})_{1 \leq \ell < k \leq n}$ as the parameters in the definition of $\Phi$, is sufficient for its monotonicity as a clustering function. We then present necessary and sufficient conditions on these parameters for the existence of the free energy and a wetting transition. 

Formally, if $u,v \in \cB(A)$, we shall say that $v$ is a {\em direct branching ancestor} of $u$ if
$v$ is the youngest ancestor of $u$ which is also a branching point. In this case $u$ is a {\em direct branching descendant} of $v$. For $0 \le \ell < k \le n$, we then let
\begin{equation}
	\cB_{k,\ell}(A) := \big\{(u,v) \in \cB(A) :\: v \text{ direct branching ancestor of } u,
	\, |u| = \ell, |v| = k \big\} \,,
\end{equation}
and set 
\begin{equation}
b_{k,\ell} := 
\begin{cases}
	|\cB_{k,\ell}(A)| & 0 \leq \ell < k \leq n\,,\\
	\1_{\{\max_{\cB(A)} |v|\}}(\ell) & 0 \leq \ell < k = n+1 \,.
\end{cases}
\end{equation}
Thus, $b_{k,\ell}$ is the number of branching points at level $\ell$ whose direct branching ancestor is in level $k$, where the oldest branching point: $\argmax_{v \in \cB(A)} |v|$, is thought of as having a branching ancestor at level $n+1$.
The {\em second order branching pattern} corresponding to $A \subseteq \bbL_0^{(n)}$ is the triangular array:
\begin{equation}
	\ul{b}(A) = (b_{k, \ell} :\: 0 \le \ell < k \le n +1)\,,
\end{equation}
where we use the same notation from the previous example.
Observe that the first order branching pattern elements $b_k$ can be easily reconstructed from the second order pattern via,
\be{eq_one_two_bf}
b_i (A) = \sum_{k=i+1}^{n+1} b_{k,i}= \frac{1}{2}\sum_{\ell=0}^{i-1} b_{i,\ell} \,.
\ee

We can now define the second order branching clustering function.
\begin{dfn}
For $n \geq 1$, let $(h_{k,\ell} :\: 0 \leq \ell < k \leq n+1)$ and $h\in \bbR$ be real numbers.
Then, the function $\Phi \equiv \Phi^{\{h_{k,\ell}\}, h}_n: 2^{\bbL_0^{(n)}} \to \bbR$, defined by setting $\Phi(\emptyset) = 0$ and for each $A \subseteq \bbL_0^{(n)} \setminus \{\emptyset\}$ via 
\begin{align} \label{eq:BP-ex2} 
 \Phi (A ) := \sum_{0 \leq \ell < k \leq n+1} h_{k ,\ell} b_{k, \ell} (A) + h 
 \; ; \quad \ul{b}(A) = (b_{k,\ell})_{0 \leq \ell < k \leq n+1} \,,
\end{align}
is called a {\em second order branching clustering function} with parameters 
$(h_{k,\ell})_{0 \leq \ell < k \leq n+1}$ and $h$.
\end{dfn}

As in the previous example, the clustering property of $\Phi$ follows essentially by definition. 
Monotonicity, on the other hand, is inherited from that of the triangular array. To this end, we shall say that triangular array $\underline{h} = \lbr h_{k,\ell}\rbr$ is non-decreasing if
 \be{eq:h-lex-order} 
  k \leq k'\ {\rm and} \ \ell \leq \ell'\ \Rightarrow h_{k,\ell} \leq h_{k',\ell'} \,.
 \ee
 
\begin{prop}
\label{p:1.7}
Let $n \geq 1$. Suppose that $\{h_{k,\ell}\}_{0 \leq \ell < k \leq n+1}$ is a non-decreasing triangular array and that $h \geq h_{n+1 , n}$. 
Then a {\em second order branching clustering function} with parameters $\{h_{k,\ell}\}_{0 \leq \ell < k \leq n+1}$ and $h$ is a monotone clustering function.
\end{prop}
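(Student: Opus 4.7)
The clustering property of $\Phi$ is immediate: $\Phi(\emptyset) = 0$ holds by construction, and $\Phi(A) = \Phi(A')$ for $A \sim A'$ because the second-order branching pattern $\ul{b}(A)$ depends only on the tree-isomorphism class of $\cT(A)$. It remains to verify conditions (a) and (b) of Definition~\ref{dfn:MC}.

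For (b), given disjoint $A, B \subseteq \bbL_0^{(n)}$ with $C = A \cup B$, I will describe how $\ul{b}(C)$ decomposes in terms of $\ul{b}(A)$ and $\ul{b}(B)$. The new elements of $\cB(C) \setminus (\cB(A) \cup \cB(B))$ form a ``spine'' of branching points along the tree path connecting the oldest elements of $\cB(A)$ and $\cB(B)$, terminating at $q := \bigwedge C$; in the simplest case this spine is just $\{q\}$. Within $\cT(A)$ and $\cT(B)$ the direct branching ancestor assignments are preserved, except at the tops: the formerly fictitious $(n+1, \ell)$ ancestors of the oldest elements are replaced by real ancestors at some levels $k \leq n$ along the spine. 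By the non-decreasingness~\eqref{eq:h-lex-order} of $(h_{k,\ell})$, each such replacement lowers the associated $h_{k, \ell}$ term. The new spine BPs add their own edges with both components $\leq n$, and the top spine element $q$ carries a single $(n+1, |q|)$ fictitious edge. Combining $h \geq h_{n+1, n}$ with~\eqref{eq:h-lex-order}, every new edge contribution is bounded above by either the term it replaces or by $h$; since $\Phi(A) + \Phi(B)$ contains the constant $h$ twice while $\Phi(C)$ contains it only once, the surplus $h$ absorbs the remaining contribution, giving $\Phi(C) \leq \Phi(A) + \Phi(B)$.

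For (a), the key is to translate $\Phi(A) \leq \Phi(B)$ into a statement about majorization of the multiset of edges $\{(|p^\uparrow|, |p|) : p \in \cB(A)\}$---where $p^\uparrow$ denotes the direct branching ancestor of $p$ in $\cB(A)$ or the fictitious $n+1$ root---by the corresponding multiset for $B$. Since $(h_{k,\ell})$ is non-decreasing in both coordinates per~\eqref{eq:h-lex-order} and the two multisets have the same cardinality $2|A| - 1$, this majorization is equivalent to the two-dimensional stochastic-dominance inequality
\begin{equation*}
N_{K, L}(A) := \#\{p \in \cB(A) : |p| \geq L,\, |p^\uparrow| \geq K\} \leq N_{K, L}(B), \qquad \forall\, 0 \leq L < K \leq n+1 .
\end{equation*}
I plan to prove this by constructing, for each $(K, L)$, an injection from the set counted by $N_{K, L}(A)$ into that counted by $N_{K, L}(B)$. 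The construction uses the bijection $\sigma : A \to B$: each $p$ contributing to $N_{K, L}(A)$ corresponds either to a BP at level $\geq K$ or to the top element $\bigwedge(A \cap T_v)$ of $\cB(A) \cap T_v$ for a unique level-$K$ vertex $v$ (with $|A \cap T_v| \geq 2$), and the $\sigma$-image $\sigma(A \cap T_v) \subseteq B$ yields a corresponding BP of $B$ whose level is $\geq |p| \geq L$ by $A \prec B$ and whose direct branching ancestor is at level $\geq K$ by the tree ultrametric. Injectivity follows from the disjointness of the subtrees $T_v$ for distinct $v$ at level $K$.

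I expect the main obstacle to be the injectivity and ancestor-level verification for the map in (a): while the sub-clusters of $A$ are cleanly parametrized by their level-$K$ ancestor, their $\sigma$-images may cluster unexpectedly in $B$, and one must use the ultrametric geometry of $\bbT^{(n)}$ carefully to certify that the resulting BPs of $B$ are distinct and have DBA at level $\geq K$. A secondary complication in (b) is when the spine consists of more than one new vertex, requiring a tiered accounting of $h_{k,\ell}$ contributions; but here~\eqref{eq:h-lex-order} together with $h \geq h_{n+1, n}$ is precisely tuned to absorb all modifications with the correct sign.
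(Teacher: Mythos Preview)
Your proposal contains genuine gaps in both parts, and the paper's approach is quite different.

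\textbf{Part (a).} Your reduction to the inequality $N_{K,L}(A)\le N_{K,L}(B)$ is not sound: in two dimensions with the product order, having $\mu(\{k\ge K,\ell\ge L\})\le\nu(\{k\ge K,\ell\ge L\})$ for all \emph{quadrants} does \emph{not} imply $\int h\,d\mu\le\int h\,d\nu$ for all coordinate-wise non-decreasing $h$. (A clean abstract counterexample: multisets $\{(4,1),(3,2)\}$ and $\{(4,2),(3,1)\}$ satisfy the quadrant inequalities, yet $h_{3,1}=0,\ h_{4,1}=h_{3,2}=h_{4,2}=1$ is non-decreasing and gives the wrong inequality.) What is actually needed is a single bijection $\iota:\cB(A)\to\cB(B)$ with $(|p^{\uparrow}|,|p|)\le(|\iota(p)^{\uparrow}|,|\iota(p)|)$ coordinate-wise, not a separate injection for each $(K,L)$. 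Moreover, your claimed injectivity ``from disjointness of the $T_v$'' is not justified: disjoint $\sigma(A\cap T_v)$ and $\sigma(A\cap T_{v'})$ can very well have the same $\bigwedge$, and you do not handle the case $|p|\ge K$ at all. The paper avoids all of this by induction on $|A|$: remove the leaf $1\in A$ with $m_1=\min_i\min_{j\ne i}|i\wedge j|$, apply the induction hypothesis to $\hat A=A\setminus\{1\}$ and $\hat B=B\setminus\{\sigma_1\}$, and then check directly (in two cases, depending on whether $r_{\sigma_1}>m_{\sigma_1}$ or $r_{\sigma_1}=m_{\sigma_1}$) that adding $\sigma_1$ back to $\hat B$ increases $\Phi_n$ by at least $h_{r_1,m_1}$.

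\textbf{Part (b).} Your geometric picture is incorrect: the new branching points $\cB_{A\Delta B}:=\cB(A\cup B)\setminus(\cB(A)\cup\cB(B))$ do \emph{not} form a spine along the path joining the oldest points of $\cB(A)$ and $\cB(B)$. For example, with $A=\{v_1,v_3\}$, $B=\{v_2,v_4\}$ where $v_1,v_2$ and $v_3,v_4$ are sibling pairs under a common grandparent, one has $\cB(A)\cap\cB(B)=\{v_1\wedge v_3\}$ while $\cB_{A\Delta B}=\{v_1\wedge v_2,\,v_3\wedge v_4\}$, two vertices at level~$1$ that lie nowhere on the (degenerate) path between the oldest elements. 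In general $|\cB_{A\Delta B}|=|\cB(A)\cap\cB(B)|+1$, and the paper shows (Lemmas~\ref{lem:AC-DAC-a}--\ref{lem:AC-DAC-d}) that $\cB_{A\Delta B}$ are exactly the \emph{leaves} of the subtree generated by $\cB_{AB}\cup\cB_{A\Delta B}$, with $\cB_{AB}:=\cB(A)\cap\cB(B)$ the internal branching points. From this tree structure the paper extracts a matching $\tau$ from all but one point of $\cB_{A\Delta B}$ onto $\cB_{AB}$ with $|\tau(z)|>|z|$, and then checks that each new contribution $h_{|\fra_{A\cup B}(z)|,|z|}$ is dominated by the doubly-counted $h_{|\fra_A(\tau(z))|\wedge|\fra_B(\tau(z))|,|\tau(z)|}$, with the one leftover point absorbed by the extra $h$. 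Your ``tiered accounting'' along a spine will not survive examples like the one above.
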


Turning to the question of wetting, as in the previous example we suppose that an infinite triangular array $(h_{k,\ell})_{0 \leq \ell < k}$ is given, and for $n \geq 1$ let $\Phi_n \equiv \Phi_n^{\{h_{k,\ell}\},h_{n+1,n}}$ be 
a {\em second order branching clustering function} with parameters $(h_{k,\ell})_{0 \leq \ell < k \leq n+1}$ and $h = h_{n+1, n}$.

\begin{prop} 
\label{prop:B2-free-energy-exist}
Suppose that $(h_{k,\ell})_{0 \leq \ell < k}$ is non-decreasing. Then, the free energy exists if
\begin{align} \label{eq:B2-free-energy-exist} 
    \sum_{k} 2^{-k} h_{k, k-1}  < \infty \,.
\end{align}
\end{prop}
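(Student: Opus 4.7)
My plan is to deduce the existence of $\zeta$ by first establishing the canonical free energy $\omega$ via Proposition~\ref{prop:Phi-FE-A}, and then invoking Theorem~\ref{thm:Phi-wet}. This reduces the task to checking three properties of the sequence $\{\Phi_n\}$: monotonicity, non-negativity, and a summable sub-additive bound of the form $\Phi_n(A) \leq \Phi_{n-1}(A) + \gamma_n$ for every $A \subseteq \bbL^{(n)}_{0, \sfr /\sfl}$. Monotonicity is immediate from Proposition~\ref{p:1.7} since the array is non-decreasing and the constant term of $\Phi_n$ equals exactly $h_{n+1, n}$. Non-negativity can be arranged by a harmless shift of the $h_{k, \ell}$ by a common constant, which preserves monotonicity and hypothesis~\eqref{eq:B2-free-energy-exist} while translating the free energy by a fixed amount.

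The crux of the argument is comparing the second-order branching patterns of $A$ when viewed in $\bbT^{(n)}$ versus when viewed in $\bbT^{(n-1)}$ through the isomorphism $\bbT^{(n)}_{\sfr /\sfl} \cong \bbT^{(n-1)}$. Because levels are measured from the leaves in both trees, the isomorphism preserves the level of every vertex, and therefore identifies the branching sets $\cB(A)$ level-by-level. Consequently, $b_{k, \ell}(A)$ computed in the two trees agree for all $0 \leq \ell < k \leq n-1$, while all $b_{k, \ell}(A)$ with $k \geq n$ vanish except for the "oldest branching point" conventions. Setting $\ell^* := \max_{v \in \cB(A)} |v|$, which lies in $[0, n-1]$ when $A \neq \emptyset$, one obtains
\begin{equation}
\Phi_n(A) - \Phi_{n-1}(A) = (h_{n+1, \ell^*} - h_{n, \ell^*}) + (h_{n+1, n} - h_{n, n-1}),
\end{equation}
and both sides vanish when $A = \emptyset$.

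By the non-decreasing hypothesis, $h_{n+1, \ell^*} \leq h_{n+1, n}$ since $\ell^* \leq n-1 \leq n$; combined with the (post-shift) non-negativity of the coefficients, this yields $\Phi_n(A) - \Phi_{n-1}(A) \leq 2 h_{n+1, n}$. Taking $\gamma_n := 2 h_{n+1, n}$, a reindexing gives
\begin{equation}
\sum_{n \geq 1} 2^{-n} \gamma_n \;=\; 4 \sum_{m \geq 2} 2^{-m} h_{m, m-1} \;<\; \infty
\end{equation}
by~\eqref{eq:B2-free-energy-exist}. All hypotheses of Proposition~\ref{prop:Phi-FE-A} are thus satisfied, so $\omega$ exists and Theorem~\ref{thm:Phi-wet} provides $\zeta$. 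The main delicacy is the careful bookkeeping around the "oldest branching point" convention, which effectively relabels one branching point's diagonal coefficient from $h_{n, \ell^*}$ in $\bbT^{(n-1)}$ to $h_{n+1, \ell^*}$ in $\bbT^{(n)}$; the hypothesis~\eqref{eq:B2-free-energy-exist} is calibrated precisely so that this diagonal growth can be absorbed into a $2^{-n}$-summable sequence $\gamma_n$.
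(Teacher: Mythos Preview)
Your argument is correct and follows essentially the same route as the paper: compute $\Phi_n(A)-\Phi_{n-1}(A)=(h_{n+1,\ell^*}-h_{n,\ell^*})+(h_{n+1,n}-h_{n,n-1})$, bound it by $2h_{n+1,n}$, set $\gamma_n=2h_{n+1,n}$, and invoke Proposition~\ref{prop:Phi-FE-A}. One small wording issue: shifting all $h_{k,\ell}$ by a constant $c$ adds $2c|A|$ to $\Phi_n(A)$, so it shifts the \emph{argument} of $\zeta$ (i.e., $\zeta^{\text{new}}(J)=\zeta^{\text{old}}(J-2c)$) rather than translating its value, but this does not affect the existence conclusion.
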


For a sufficient condition on $\{h_{k,\ell}\}$ for the existence of a wetting transition
\begin{prop}
\label{p:1.13}
Suppose that $(h_{k,\ell})_{0 \leq \ell < k}$ is non-decreasing and that~\eqref{eq:B2-free-energy-exist} holds. Then, there is a wetting transition at 
$J^* \in \big(2 \ln 2 - 2e^{-1} \kappa_2,\, \infty\big)$ if
\begin{equation}
\label{e:1.36}
\kappa_2 := \sup\limits_{k} \sum\limits_{\ell=0}^{k-1} 2^\ell e^{-h_{k, \ell}} < \infty.
\end{equation}
\end{prop}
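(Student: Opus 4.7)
The plan is to establish both endpoints: (i) $J^* < \infty$ (the wetting transition exists), and (ii) $J^* > 2\ln 2 - 2e^{-1}\kappa_2$. Bound (i) follows from a single-configuration lower bound on the partition function, while (ii) requires an upper bound obtained via a skeleton generating-function recursion.

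For (i), I would test $\sfZ_n^{\Phi_n,J}$ against the single configuration $A = \bbL_0^{(n)}$ (all leaves dry). The induced tree is all of $\bbT^{(n)}$, so every vertex at level $\ell < n$ is a branching point with direct branching ancestor one level up, giving $b_{\ell+1,\ell}(A) = 2^{n-\ell}$ and $b_{n+1,n}(A) = 1$. Hence $\Phi_n(A) = \sum_{\ell=0}^{n-1} h_{\ell+1,\ell}\,2^{n-\ell} + 2 h_{n+1,n}$. The hypothesis~\eqref{eq:B2-free-energy-exist} forces both $h_{n+1,n} = o(2^n)$ and $\Phi_n(A)/2^n \to C_* := \sum_\ell 2^{-\ell} h_{\ell+1,\ell} < \infty$. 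Thus $\zeta(J) \ge J - C_*$, so $J^* \le C_* < \infty$.

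For (ii), I would parametrize each non-empty $A$ by its branching-point tree $\cB(A)$, rooted at the oldest branching point at some level $\ell^* \in [0,n]$, and introduce, for $\ell \ge 0$, the generating function $H_\ell(x)$ summing $x^{|\text{leaves}(\cS)|} \prod_{(u,v) \in E(\cS)} e^{-h_{|v|,|u|}}$ over all such skeletons $\cS$ rooted at a fixed level-$\ell$ vertex of $\bbT^{(n)}$ (with the root forced to be a branching point when $\ell \ge 1$). Decomposing at the root yields $H_0(x) = x$ and, for $\ell \ge 1$,
\[
H_\ell(x) \;=\; \Bigl(\sum_{\ell'=0}^{\ell-1} 2^{\ell-1-\ell'}\, e^{-h_{\ell,\ell'}}\, H_{\ell'}(x)\Bigr)^{\!2},
\]
where $2^{\ell-1-\ell'}$ counts the positions for the next branching point at level $\ell'$ inside each of the root's two subtrees. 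Accounting for the oldest-branching-point cost $h_{n+1,\ell^*}$ and the additive constant $h = h_{n+1,n}$, the full partition function decomposes as $\sfZ_n^{\Phi_n,J} = 1 + e^{-h_{n+1,n}} \sum_{\ell=0}^{n} 2^{n-\ell}\, e^{-h_{n+1,\ell}}\, H_\ell(e^J)$.

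I would then prove by induction on $\ell$ that $H_\ell(x) \le 4^{\ell+1}/\kappa_2^2$ whenever $x \le 4/\kappa_2^2$: substituting the hypothesis into the recursion and using $\sum_{\ell' < \ell} 2^{\ell'} e^{-h_{\ell,\ell'}} \le \kappa_2$ exactly closes the step. Plugging back into the partition function and applying the $\kappa_2$ bound once more gives $\sfZ_n^{\Phi_n,J} \le 1 + 4\, e^{-h_{n+1,n}}\cdot 2^n/\kappa_2$ for $J \le 2\ln 2 - 2\ln\kappa_2$. Since $h_{n+1,n}/2^n \to 0$, this yields $\zeta(J) \le 0$, and as $\zeta \ge 0$ always, $\zeta(J) = 0$, so $J^* \ge 2\ln 2 - 2\ln\kappa_2$. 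The elementary inequality $\ln y \le y/e$ (with equality only at $y = e$) then gives $J^* \ge 2\ln 2 - 2e^{-1}\kappa_2$, strict outside the degenerate case $\kappa_2 = e$. The main obstacle is setting up the skeleton recursion correctly: carefully extracting the $2^{\ell-1-\ell'}$ positional factor and isolating the virtual-parent cost $h_{n+1,\ell^*}$ of the oldest branching point. Once done, the $\kappa_2$ hypothesis is precisely what is needed to close the induction, and the rest is routine bookkeeping.
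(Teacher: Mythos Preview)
Your proof is correct and takes a genuinely different route from the paper. For the lower bound on $J^*$, the paper works through the canonical free energy: using Lemma~\ref{lem_entropy_bs} and Lemma~\ref{pqrt_fct_bp_max} it writes $\omega(\epsilon)$ in terms of the maximizing second-order pattern, applies Stirling to the multinomial coefficients, and then bounds the resulting relative-entropy term via the pointwise inequality $y\ln y\ge -e^{-1}$ to obtain
\[
-\sum_{k,\ell} b_{k,\ell}\ln\frac{b_{k,\ell}}{2b_k e^{-p_{k,\ell}}}\;\le\; 2e^{-1}\kappa_2\,\epsilon\, 2^n,
\qquad p_{k,\ell}:=h_{k,\ell}-(\ln 2)\ell,
\]
which gives $\omega(\epsilon)/\epsilon\le -2\ln 2+2e^{-1}\kappa_2$ and hence $J^*\ge 2\ln 2-2e^{-1}\kappa_2$ via Corollary~\ref{c:1.9}. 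Your skeleton recursion bypasses the canonical ensemble entirely and bounds $\sfZ_n^{\Phi_n,J}$ directly; the induction $H_\ell(x)\le 4^{\ell+1}/\kappa_2^2$ for $x\le 4/\kappa_2^2$ closes exactly because the inner sum is controlled by $\kappa_2$, and yields the \emph{sharper} bound $J^*\ge 2\ln 2-2\ln\kappa_2$, from which the paper's constant follows by $\ln\kappa_2\le\kappa_2/e$. So the entropy method buys compatibility with the paper's $\omega$-framework, while your combinatorial recursion buys a tighter constant with less machinery. Two minor notes: your argument (like the paper's) only gives $J^*\ge 2\ln 2-2e^{-1}\kappa_2$ with equality possible when $\kappa_2=e$, so the open endpoint in the statement is a slight imprecision either way; and it is worth remarking that $H_\ell$ depends only on $\ell$ and not on $n$, which is what makes the induction uniform.
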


Turning to the necessary condition, if $v = (v_{n_1,n_2})_{n_1, n_2 \geq 0}$ is a doubly indexed sequence, then we denote its (bi-variate) Laplace Transform by $\hat{v}(s_1,s_2)$. That is,
    \begin{align}
        \hat{v} \lb s_1, s_2 \rb = \sum_{n_1=0}^\infty \sum_{n_2=0}^\infty e^{- s_1 n_1 - s_2 n_2}\, v_{n_1, n_2}\,,
    \end{align}
whenever that the series converges absolutely.
\begin{prop} \label{prop:B2-general-necessary}
Suppose that $(h_{k,\ell})_{0 \leq \ell < k}$ is non-decreasing and that~\eqref{eq:B2-free-energy-exist} holds. Then, there is no wetting transition if
\begin{align} \label{eq:B2-general-assumption-for-no-wetting}
    \lim_{s \downarrow 0} \big( \ln{ \frac{1}{s}} - 2 s^2 \wh{g^+} \lb s, 2s \rb\big) = \infty \,,
\end{align}
where $\{g_{\ell,d}\}$ is a doubly indexed sequence defined via,
    \begin{align} \label{eq:b2-g-def}
        g_{\ell, d} := 
        \left\{
        \begin{array}{lll}
        h_{\ell + d, \ell} - (\ln{2}) \ell & \quad & l \geq 0, d \geq 1 \,,\\
		0 & & l \geq 0, d = 0 \,.
        \end{array}
        \right.
    \end{align}
\end{prop}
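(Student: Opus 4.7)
By Corollary~\ref{c:1.9}, proving the absence of a wetting transition amounts to showing
\[
\lim_{\epsilon\downarrow 0}\frac{\omega(\epsilon)}{\epsilon}=+\infty,
\]
which, via~\eqref{eq:Phi-FE1}, reduces to producing adequate lower bounds on the canonical partition function $\bbW_n^{\Phi_n}(\epsilon 2^n)$. My plan is to exhibit, for each small $s>0$, an explicit family $\mathcal F_{s,n}$ of dry-sets $A\subseteq \bbL_0^{(n)}$ with $|A|=\epsilon 2^n$ for a suitable $\epsilon=\epsilon(s)\downarrow 0$, together with a uniform upper bound on $\Phi_n(A)$, and to invoke
\[
\bbW_n^{\Phi_n}(\epsilon 2^n)\;\ge\;|\mathcal F_{s,n}|\cdot\exp\!\Big(-\max_{A\in\mathcal F_{s,n}}\Phi_n(A)\Big)
\]
in order to deduce a bound of the form $\omega(\epsilon)/\epsilon\ge\ln(1/s)-2s^2\wh{g^+}(s,2s)-O(1)$, which tends to $+\infty$ by hypothesis.

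The family $\mathcal F_{s,n}$ would be built hierarchically. Its elements are disjoint unions of ``super-clusters'', where each super-cluster is a subtree of height $\ell+d$ containing exactly two fully-packed sub-clusters of height $\ell$ sitting in distinct children of the super-cluster root; this guarantees that the LCA of the two cluster-roots is at level $\ell+d$. For such a configuration, the only non-vanishing entries of the second-order branching pattern are (i)~$b_{k+1,k}$ for $1\le k\le\ell-1$, coming from the packed clusters, (ii)~$b_{\ell+d,\ell}$, coming from the meeting of the two cluster-roots inside each super-cluster, and (iii)~a negligible top-of-tree boundary term. Varying the scales $(\ell,d)$ admissibly, and then superposing different scales, will be what makes $\mathcal F_{s,n}$ large enough to match the Laplace-transform sum.

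Counting and cost estimates are handled separately. Entropy: the number of configurations is dominated by the choice of super-cluster positions among the $\sim 2^{n-\ell-d}$ available locations; a standard Stirling computation yields $\ln|\mathcal F_{s,n}|\gtrsim \epsilon 2^n \ln(1/s)$, which is the source of the $\ln(1/s)$ term. Cost: using the expansion~\eqref{eq:BP-ex2}, the within-cluster contribution to $\Phi_n(A)$ is controlled by the summability hypothesis~\eqref{eq:B2-free-energy-exist} and absorbed into $O(1)$ per dry site, while the between-cluster contribution per super-cluster is $2h_{\ell+d,\ell}$. After aggregating across admissible scales $(\ell,d)$ with Laplace-type weights $e^{-s\ell-2sd}$ -- the factor of $2$ in the $d$-variable reflecting the branching/doubling in super-cluster heights -- and subtracting the $(\ln 2)\ell$ entropy credit of cluster placement, the normalized aggregate cost matches $2s^2\wh{g^+}(s,2s)$.

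The main obstacle is the multi-scale aggregation. A single-scale construction (fixed $(\ell,d)$) only produces a lower bound involving $g^+_{\ell,d}$ at that single pair, whereas the hypothesis involves the full Laplace transform $\wh{g^+}(s,2s)$ summed over all pairs. Realizing this sum as a genuine lower bound on $\ln\bbW_n^{\Phi_n}$ requires either a probabilistic superposition of scales or a carefully orchestrated deterministic fractal packing, together with verification that (i)~no leaf of $\bbL_0^{(n)}$ is reused across scales, (ii)~the branching pattern remains concentrated on the prescribed $(k+1,k)$ and $(\ell+d,\ell)$ entries -- so that monotonicity of $\underline h$ does not inflate the cost on parasitic entries -- and (iii)~the aggregate normalized cost telescopes cleanly to $2s^2\wh{g^+}(s,2s)$. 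Once this multi-scale construction is in place, choosing $s=s(\epsilon)\downarrow 0$ optimally and appealing to the hypothesis delivers $\omega(\epsilon)/\epsilon\to\infty$, completing the argument.
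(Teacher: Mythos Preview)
Your proposal has a genuine gap at precisely the point you flag yourself: the multi-scale aggregation. A single-scale super-cluster construction with fixed $(\ell,d)$ produces a lower bound on $-J^*$ involving only one value $g^+_{\ell,d}$, and the hypothesis~\eqref{eq:B2-general-assumption-for-no-wetting} about the full Laplace sum $\wh{g^+}(s,2s)$ does not in general imply that $\sup_{\ell,d}[F(\ell,d)-Cg^+_{\ell,d}]=\infty$ for the single-scale entropy $F$. Your suggested fixes---probabilistic superposition of scales, or a deterministic fractal packing avoiding leaf reuse---are left as a wish list; neither is carried out, and the ``telescoping'' you allude to in step~(iii) is exactly the substantive content that would have to be supplied.

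The paper sidesteps this difficulty by a different device: it never constructs explicit sets $A$, but works directly at the level of second-order branching patterns $\ul b$, exploiting the exact entropy formula of Lemma~\ref{lem_entropy_bs} together with the reduction of Lemma~\ref{pqrt_fct_bp_max}. It then exhibits a \emph{single} admissible pattern in which the gap $d=k-\ell$ between a branching point and its direct branching ancestor is geometrically distributed: $b_{k,k-d}=2b_k\,t(1-t)^{d-1}$, with $b_k/a_k=t$ below a cutoff level $j$ and $=1$ above it. Plugging this into the multinomial entropy and the cost $\sum g_{k-d,d}\,b_{k,k-d}$ produces, after routine summation, the closed-form bound $-J^*\ge \ln(1/t)-2t^2\hat g(\ln(1+t),2\ln(1+t))-C$; the bivariate Laplace transform appears automatically as the double geometric sum over $(\ell,d)$, with the factor $(1-t)^{d-1}$ accounting for the $2s$ in the second argument. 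No packing, no disjointness check, no parasitic branching entries---the aggregation over scales is baked into the geometric law of the gap. This is the idea your sketch is missing.
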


By making stronger assumptions on the structure of $\{h_{k,\ell}\}$, we have the following simpler necessary condition.
\begin{corollary} \label{cor: b2 additive necessary}
Suppose that $(h_{k,\ell})_{0 \leq \ell < k}$ is non-decreasing, that~\eqref{eq:B2-free-energy-exist} holds and that 
\begin{equation}
\label{eq:B2-necessary-additive-structure1}
	h_{\ell+d, \ell} = h^{(1)}_\ell + h^{(2)}_d
	\; ; \quad \ell,d \geq 0 \,,
\end{equation}    
for two non-negative sequences $(h^{(1)}_k)_{k \geq 0}$ and $(h^{(2)}_k)_{k \ge 0}$. Then there is no wetting transition if
\begin{align} \label{eq:B2-additive-necessary}
        \lim_{k \to \infty}\, (\ln 2)k + \ln{k} - h_{k}^{\lb 1 \rb} - h_{\lfloor k / 2 \rfloor}^{\lb 2\rb} = \infty \,.
\end{align}
\end{corollary}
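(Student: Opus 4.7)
The plan is to deduce Corollary \ref{cor: b2 additive necessary} from Proposition \ref{prop:B2-general-necessary} by showing that the Laplace-transform condition \eqref{eq:B2-general-assumption-for-no-wetting} follows from the pointwise hypothesis \eqref{eq:B2-additive-necessary}. Under the additive structure, $g_{\ell, d} = h^{(1)}_\ell + h^{(2)}_d - (\ln 2)\ell$ for $d \geq 1$, and setting $\phi(k) := (\ln 2)k + \ln k - h^{(1)}_k - h^{(2)}_{\lfloor k/2 \rfloor}$, the corollary's hypothesis is that $\phi(k) \to \infty$.

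The first step is to exploit that the non-decreasing lex order of $(h_{k,\ell})$ forces both $h^{(1)}$ and $h^{(2)}$ to be individually non-decreasing (by taking the substitutions $(k',\ell') = (k+1,\ell+1)$ and $(k',\ell') = (k+1,\ell)$ in \eqref{eq:h-lex-order}). Setting $k := \max(\ell, 2d)$ and splitting into the cases $2d \leq \ell$ (using $h^{(2)}_d \leq h^{(2)}_{\lfloor \ell/2 \rfloor}$) and $2d > \ell$ (using $h^{(1)}_\ell \leq h^{(1)}_{2d}$), substitution into the definition of $\phi$ yields the pointwise bound
\begin{equation*}
g_{\ell, d} \;\leq\; \ln k + (\ln 2)(k - \ell) - \phi(k).
\end{equation*}
On the ``diagonal'' $(\ell,d) = (k,\lfloor k/2\rfloor)$ this reduces exactly to $\ln k - \phi(k)$, matching the hypothesis.

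Next I would split the Laplace transform as $\wh{g^+}(s, 2s) = T_A + T_B$ along the two cases above and control each piece after multiplication by $2s^2$. For $T_A$ (the region $2d \leq \ell$), the bound on $g^+_{\ell,d}$ is independent of $d$; summing the geometric series over $d \in [1, \lfloor\ell/2\rfloor]$ produces a factor of order $1/s$, giving $2s^2 T_A \leq s \sum_{\ell} e^{-s\ell}(\ln\ell - \phi(\ell))^+$. Using that for every $M$ one has $\phi(\ell) \geq M$ for $\ell \geq K_M$, together with the standard Laplace identity $s\sum_\ell e^{-s\ell}(\ln\ell - M)^+ = \ln(1/s) - M + O(1)$, one obtains $2s^2 T_A \leq \ln(1/s) - M(s) + o(1)$ for some $M(s) \to \infty$ chosen slowly enough that the truncation error $sK_{M(s)}\ln K_{M(s)}$ vanishes. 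For $T_B$, the change of variables $u = 2d - \ell \geq 1$ rewrites the Laplace weight as $e^{-s\ell - 2sd} = e^{-2s(2d) + su}$, localizing the effective range of $u$ and allowing the $(\ln 2)u$ growth in the pointwise bound to be traded against $e^{-s(2d)}$ plus the $-\phi(2d)$ correction, yielding $2s^2 T_B = o(\ln(1/s))$. Combining, $\ln(1/s) - 2s^2 \wh{g^+}(s, 2s) \geq M(s) - o(\ln(1/s)) \to \infty$, which verifies \eqref{eq:B2-general-assumption-for-no-wetting}.

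The main obstacle is the off-diagonal region $T_B$: a naive estimate that drops the $-\phi(2d)$ correction would produce a contribution of order $1/s$ after multiplication by $2s^2$, coming from the linear-in-$u$ growth $(\ln 2)(2d - \ell)$ in the bound on $g$. Closing the argument requires carefully tracking the cancellation between this growth and the divergence of $\phi(k)$ along the sequence $k = 2d$, and exploiting the additional lex-monotonicity constraint (which restricts the joint growth rates of $h^{(1)}$ and $h^{(2)}$) to localize the dominant Laplace mass near the diagonal $u = O(1)$ where the hypothesis directly applies.
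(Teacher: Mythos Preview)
Your $T_A$ analysis is essentially right, but the approach has a genuine gap in $T_B$, and it is your own pointwise bound that creates it. In the region $2d>\ell$ you replace $h^{(1)}_\ell$ by $h^{(1)}_{2d}$, which injects an artificial term $(\ln 2)(2d-\ell)$ into the bound on $g^+_{\ell,d}$; this term can really be of that order (take $h^{(1)}_\ell$ close to $(\ln 2)\ell$). Summing $e^{-4sd+su}(\ln 2)u$ over $1\le u\le 2d$, $d\ge 1$ gives a contribution of order $s^{-3}$ to $T_B$, hence order $s^{-1}$ to $2s^2T_B$, not $o(\ln(1/s))$. Your proposed rescue via the lex-monotonicity growth constraint is not worked out, and making it precise would essentially mean recovering the additive information you threw away when passing to the pointwise bound.

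The paper's proof avoids all of this by never bounding $g$ pointwise. With $g^{(1)}_\ell:=h^{(1)}_\ell-(\ln 2)\ell$ one has $g_{\ell,d}=g^{(1)}_\ell+h^{(2)}_d$, so the bivariate Laplace transform \emph{factors}:
\[
\hat g(s,2s)=\tfrac{1}{2s}\,\hat g^{(1)}(s)+\tfrac{1}{s}\,\hat h^{(2)}(2s)+O(s^{-2}),
\]
using only $\sum_d e^{-2sd}=(2s)^{-1}(1+O(s))$, $\sum_\ell e^{-s\ell}=s^{-1}(1+O(s))$ and the a priori bounds $|g^{(1)}_k|,h^{(2)}_k=O(k)$ (which follow from $h^{(1)},h^{(2)}\ge 0$ together with~\eqref{eq:B2-additive-necessary}). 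After the elementary identity $\hat h^{(2)}_{\lfloor\cdot/2\rfloor}(s)=2\hat h^{(2)}(2s)+O(s^{-1})$ this reduces $\ln\frac{1}{s}-2s^2\hat g(s,2s)$ to $\ln\frac{1}{s}-s\hat g^{(1)}(s)-s\hat h^{(2)}_{\lfloor\cdot/2\rfloor}(s)+O(1)$, which diverges by the one-variable Tauberian bound~\eqref{e:3.23} applied to $u_k=\ln k-g^{(1)}_k-h^{(2)}_{\lfloor k/2\rfloor}$, exactly as in the proof of Corollary~\ref{c:1.11}. No region-splitting is needed, and the monotonicity of $h^{(1)},h^{(2)}$ is never used.
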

\begin{rem}
When $h_{k,\ell} \equiv h_\ell$ does not depend on $k$, the function $\Phi_n$ becomes a first order branching clustering functions with $(h_\ell)_{\ell \geq 0}$ and $h = h_n$. In this case we may take $h^{(2)}_d \equiv 0$ and 
$h_\ell^{(1)} = h_\ell$, in decomposition~\eqref{eq:B2-necessary-additive-structure1}. Then it is not difficult to see that conditions~\eqref{eq:B2-general-assumption-for-no-wetting} and~\eqref{eq:B2-additive-necessary} identify with conditions~\eqref{e:sufficient lim b1} and~\eqref{eq:B1-wetting-necessary-general-main} respectively. We also note that, the assumption on the positivity of the two sequences in the decomposition can be weakened to requiring that $(h^{(1)}_k)^-, (h^{(2)}_k)^- = o(k \log k)$, with minor changes to the proof.
\end{rem}
 
\subsubsection{Clustering via capacity}
Lastly we show that the (relative) capacity of a set could also be used to define a clustering function and consequently a PWC measure. While this is again a stand-alone model, it is non unnatural, as capacity of a set is inversely related to its degree of clustering. Formally, let $\calC = \lbr \sfC_e\rbr$ be a family of conductances, such that $\sfC_e = \sfC_{\abs{e}}$, 
 where $\abs{e} := \abs{u}$ if $e = e (u )$ is the edge adjacent to $u$ in the direction of the root $0$. Recall that the capacity of a set $A$ is
\begin{equation}
\label{eq:Cap} 
 {\rm CAP}_\calC (A ) := \min_{f (\mathsf{0})=1,\ f_{|_A} = 0 } \sum_{u\neq \mathsf{0}} 
 \sfC_{e (u )}\lb \nabla_{e (u )} f\rb^2 .
\end{equation}

A motivation for using capacity as a clustering function can be seen by recalling the representation of the former via escape probabilities of a continuous time random walk on $\bbT^{(n)}$, with $\calC_e$ as its transition rate across the un-oriented bond $e$. Letting $S_t$ be the associated discrete time jump chain, 
$\bbP^{\rm RW}_{u}$ be the distribution 
of  $S$ on $\bbT^{(n)}$ starting at $u$, and $\tau_A$ be the first 
return time to $A \subseteq \bbT^{(n)}$, we have
\be{eq:CAP-A-1} 
{\rm CAP}_\calC (A ) = \bbP_{\mathsf{0}}^{\rm RW}\lb \tau_A <\tau_{\mathsf{0}}\rb\cdot 
\sum_{z\sim\mathsf{0}}\sfC_{e (z )} = 
\sfC_0 \sum_{v\in A} \bbP_{v}^{\rm RW}\lb 
\tau_{\mathsf{0}} <\tau_A\rb
\ee

In particular, if $A$ and $B$ are disjoint, then 
\begin{equation*} 
\begin{split} 	
\bbP_{\mathsf{0}}^{\rm RW}\lb \tau_{A\cup B} <\tau_{\mathsf{0}}\rb 
 &= 
 \bbP_{\mathsf{0}}^{\rm RW}\lb \tau_A <\tau_{\mathsf{0}}\wedge \tau_B\rb + 
 \bbP_{\mathsf{0}}^{\rm RW}\lb \tau_B <\tau_{\mathsf{0}}\wedge \tau_A\rb \\
 &\leq \bbP_{\mathsf{0}}^{\rm RW}\lb \tau_{A} <\tau_{\mathsf{0}}\rb 
 + 
\bbP_{\mathsf{0}}^{\rm RW}\lb \tau_{B} <\tau_{\mathsf{0}}\rb , 
\end{split}
\end{equation*}
which implies \eqref{eq:MC-b}. 

It takes a bit more work to show,
\begin{prop}	
\label{p:1.5}
Let $\Phi(A) := {\rm CAP}_{\cC}(A)$. Then $\Phi$ is a monotone clustering function.
\end{prop}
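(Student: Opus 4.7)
I plan to verify each of the three defining properties of a monotone clustering function separately. The condition $\Phi(\emptyset)=0$ is immediate from \eqref{eq:Cap}: the constant function $f\equiv 1$ is admissible and has zero Dirichlet energy. Isomorphism invariance ($\Phi(A)=\Phi(B)$ whenever $A\sim B$) follows from the observation that on each connected component of $\bbT^{(n)}\setminus \mathcal{T}(A)$ the minimizer in \eqref{eq:Cap} can be taken constant (there is no Dirichlet constraint there), so $\mathrm{CAP}_{\mathcal{C}}(A)$ is a functional only of the induced weighted subtree $\mathcal{T}(A)$ with its level-indexed conductances, and these are preserved by any tree isomorphism. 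The subadditivity condition \eqref{eq:MC-b} is already derived in the text via a union bound on the escape events in \eqref{eq:CAP-A-1}.

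The substantive content is monotonicity~\eqref{eq:MC-a}. I would proceed by induction on $n$, exploiting the series/parallel reduction of $\bbT^{(n)}$ at the root. Define $\varphi(x):=\sfC_{n-1}x/(\sfC_{n-1}+x)$ with $\varphi(0):=0$, which encodes the series composition of the root edge (of conductance $\sfC_{n-1}$) with a subtree of capacity $x$; the map $\varphi$ is concave, non-decreasing, and, since $\varphi(0)=0$, subadditive in the sense $\varphi(x+y)\leq \varphi(x)+\varphi(y)$. Splitting $A=A_{\sfr}\sqcup A_{\sfl}$ according to the two subtrees $\bbT^{(n)}_{\sfr},\bbT^{(n)}_{\sfl}$ of $\bbT^{(n)}\setminus\{\mathbf{0}^{(n)}\}$ (and similarly for $B$), the parallel reduction at $\mathbf{0}^{(n)}$ gives
\begin{equation*}
\mathrm{CAP}^{(n)}(A)=\varphi\bigl(\mathrm{CAP}^{(n-1)}(A_{\sfr})\bigr)+\varphi\bigl(\mathrm{CAP}^{(n-1)}(A_{\sfl})\bigr),
\end{equation*}
and analogously for $B$ (here $\mathrm{CAP}^{(n-1)}$ denotes the capacity functional on the subtree, identified with $\bbT^{(n-1)}$).

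Given a witnessing bijection $\sigma:A\to B$ of $A\prec B$, I would split into two cases. In the \emph{aligned} case, $\sigma$ respects the root partition (up to swapping $\sfr\leftrightarrow \sfl$), so that $A_{\sfr}\prec B_{\sfr}$ and $A_{\sfl}\prec B_{\sfl}$ in $\bbT^{(n-1)}$; the induction hypothesis combined with monotonicity of $\varphi$ then closes the comparison term by term. In the \emph{mixed} case, in which $\sigma$ sends some element of one root subtree of $A$ to the opposite subtree of $B$, I would reduce $A\prec B$ to a finite sequence of single-leaf relocations. A relocation staying within one root subtree is absorbed by the induction hypothesis; a relocation that crosses the root reduces, via the subadditivity and monotonicity of $\varphi$ together with the inductively available monotonicity of $\mathrm{CAP}^{(n-1)}$ under inclusion, to the elementary inequality
\begin{equation*}
\varphi\bigl(\mathrm{CAP}^{(n-1)}(X)\bigr)\;\leq\;\varphi\bigl(\mathrm{CAP}^{(n-1)}(X\setminus\{u\})\bigr)+\varphi\bigl(\mathrm{CAP}^{(n-1)}(\{u'\})\bigr),
\end{equation*}
for the appropriate $X,u,u'$, which is a direct consequence of $\varphi(a)\leq \varphi(b)+\varphi(a-b)$ for $0\leq b\leq a$.

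The main obstacle I anticipate is the combinatorial reduction of a general $A\prec B$ to a sequence of single-leaf relocations that are compatible with the root decomposition, together with the care needed to order the sub-cases so that the ``mixed'' step invokes the monotonicity of $\mathrm{CAP}^{(n-1)}$ only as part of the inductive hypothesis at level $n-1$ and no circularity arises. Once this reduction is in place, all the analytic inequalities needed collapse to the standard concavity/sub-additivity properties of the rational function $\varphi$.
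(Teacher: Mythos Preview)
Your inductive approach via series--parallel reduction is genuinely different from the paper's, which instead invokes Lyons' representation ${\rm CAP}_\calC(A)^{-1} = \min_{\mu} \sum_{u,v \in A} \mu(u)\mu(v)\,\alpha_{|u\wedge v|}^{(n)}$ with $\alpha_\ell^{(n)} = \sum_{j\geq\ell} \sfR_j$; since $\alpha^{(n)}_\ell$ is non-increasing in $\ell$, pushing the optimal $\mu$ for $A$ forward via $\sigma$ yields a probability measure on $B$ with no larger energy, giving \eqref{eq:MC-a} in one line without any recursion or case analysis.

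Your recursion can be made to work, but the treatment of the mixed case is flawed. The proposed reduction to single-leaf relocations does not go through: a root-crossing move $A \mapsto (A\setminus\{u\})\cup\{u'\}$ is a valid $\prec$-step only if the subtree receiving $u'$ is empty in $A$ (otherwise any $v$ already on that side gives $|u \wedge v| = n > n-1 \geq |u' \wedge v|$), so after the first crossing no further one is available, and a general $A\prec B$ cannot be reached this way. What you are missing is that the mixed case is much narrower than you state: if both $A_{\sfr}, A_{\sfl} \neq \emptyset$, then for $u \in A_{\sfr}$, $v \in A_{\sfl}$ the constraint $n = |u \wedge v| \leq |\sigma_u \wedge \sigma_v|$ forces $\sigma_u, \sigma_v$ into opposite root subtrees of $B$, whence $\sigma$ respects the root partition (up to swap) and you are already in the aligned case. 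The only genuine mixed case is therefore $A$ entirely in one subtree, say $A = A_{\sfr}$. There, partition $A = \sigma^{-1}(B_{\sfr}) \sqcup \sigma^{-1}(B_{\sfl}) =: A' \sqcup A''$; the restrictions of $\sigma$ witness $A' \prec B_{\sfr}$ and $A'' \prec B_{\sfl}$ in $\bbT^{(n-1)}$, and subadditivity of ${\rm CAP}^{(n-1)}$ together with the monotonicity and subadditivity of $\varphi$ give directly
\[
\varphi\bigl({\rm CAP}^{(n-1)}(A)\bigr) \leq \varphi\bigl({\rm CAP}^{(n-1)}(A')\bigr) + \varphi\bigl({\rm CAP}^{(n-1)}(A'')\bigr) \leq \varphi\bigl({\rm CAP}^{(n-1)}(B_{\sfr})\bigr) + \varphi\bigl({\rm CAP}^{(n-1)}(B_{\sfl})\bigr),
\]
which closes the induction; no leaf-by-leaf decomposition is needed.
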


Turning to wetting. As it turns out, there is no wetting transition in this case.
\begin{prop} 
\label{lem:Cap-NotWet}
There exists $C_0 < \infty$ such that for all $n$ and $A \subseteq \bbL_0^{(n)}$, 
\be{eq:Cap-Bound} 
{\rm CAP}_\calC (A ) \leq C_0 \abs{A} .
\ee
Consequently, there is no wetting transition in this case.
\end{prop}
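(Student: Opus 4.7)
The plan is to prove the capacity bound \eqref{eq:Cap-Bound} by reducing to singletons via the subadditivity part~\eqref{eq:MC-b} of Definition~\ref{dfn:MC}, which is granted to $\Phi = {\rm CAP}_\calC$ by Proposition~\ref{p:1.5}. For a single leaf $v \in \bbL_0^{(n)}$, I would plug into the variational formula~\eqref{eq:Cap} the explicit test function equal to $0$ at $v$ and to $1$ at every other vertex of $\bbT^{(n)}$. This is admissible, and the only edge on which the gradient is non-zero is the parent edge $e(v)$; since $|e(v)| = |v| = 0$, its contribution is exactly $\sfC_0$, so ${\rm CAP}_\calC(\{v\}) \leq \sfC_0$ uniformly in $v$ and $n$. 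Iterating the subadditivity~\eqref{eq:MC-b} across the singletons in $A$ then gives ${\rm CAP}_\calC(A) \leq \sfC_0 |A|$, which is~\eqref{eq:Cap-Bound} with $C_0 := \sfC_0$.

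To derive the absence of a wetting transition, I would first confirm that the sequence $\Phi_n := {\rm CAP}_\calC$ on $\bbT^{(n)}$ satisfies the hypotheses of Proposition~\ref{prop:Phi-FE-A}, so that the free energy $\zeta$ exists. Monotonicity is Proposition~\ref{p:1.5}. For the compatibility bound, when $A \subseteq \bbL^{(n)}_{0,\sfr}$ or $A \subseteq \bbL^{(n)}_{0,\sfl}$, one views $A$ as a subset of $\bbT^{(n-1)}$; the effective-resistance representation of ${\rm CAP}_\calC^{(n)}(A)$ differs from that of ${\rm CAP}_\calC^{(n-1)}(A)$ only by an extra resistor of conductance $\sfC_{n-1}$ placed in series (the edge from the half-tree's root to $\mathsf{0}^{(n)}$), so $\Phi_n(A) \leq \Phi_{n-1}(A)$ and one may take $\gamma_n \equiv 0$.

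With $\zeta$ well-defined, the linear bound just proved immediately yields a uniform-in-$n$ lower bound on the partition function:
\begin{equation*}
\sfZ_n^{\Phi, J} \;\geq\; \sum_{A \subseteq \bbL_0^{(n)}} \rme^{(J - C_0)|A|} \;=\; \bigl(1 + \rme^{J - C_0}\bigr)^{2^n},
\end{equation*}
so $\zeta_n^\Phi(J) \geq \ln(1 + \rme^{J - C_0})$, which is strictly positive for every $J \in \bbR$. Passing to the limit gives $\zeta(J) > 0$ on all of $\bbR$, hence $J^* = \inf\{J : \zeta(J) > 0\} = -\infty$ by Definition~\ref{d:1.4}, and no wetting transition occurs. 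There is no genuine obstacle in the argument; the only place requiring a bit of care is the singleton estimate, where one must choose a test function localised at $v$ so that the bound is truly uniform in $n$.
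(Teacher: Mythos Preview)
Your argument is correct and follows essentially the same skeleton as the paper: subadditivity reduces \eqref{eq:Cap-Bound} to a uniform singleton bound, and the linear bound then forces $\zeta(J)>0$ for every $J$. The details differ in two places worth noting.

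For the singleton estimate, the paper invokes the random-walk representation \eqref{eq:CAP-A-1}, reading off ${\rm CAP}_\calC(\{u\}) = \sfC_0\,\bbP_u^{\rm RW}(\tau_{\mathsf 0}<\tau_u)\le \sfC_0$. Your choice of the indicator test function in the variational formula \eqref{eq:Cap} is more elementary and gives the same constant without appealing to the probabilistic interpretation.

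For the absence of a wetting transition, the paper routes the conclusion through Corollary~\ref{c:1.9}, bounding the canonical partition function and using that $\tfrac{1}{\epsilon 2^n}\ln\binom{2^n}{\epsilon 2^n}\to\infty$ as $\epsilon\downarrow 0$. Your direct lower bound $\sfZ_n^{\Phi,J}\ge (1+\rme^{J-C_0})^{2^n}$ is cleaner and immediately gives $\zeta(J)\ge \ln(1+\rme^{J-C_0})>0$. You also take the extra step of checking the compatibility hypothesis \eqref{eq:Phi-A} via the series-resistance identity (so that $\zeta$ is well defined), a point the paper leaves implicit; this is a genuine, if small, improvement in rigor.
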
 

\subsection{Discussion and open questions}
\label{s:1.4}
Let us make some concluding remarks. First, aside from intrinsic interest, the motivation behind studying PWC measures driven by first and second order branching clustering functions is that they can be used to study the wetting transition in the more physical cases of pinned random surfaces. Indeed, in a sequel paper~\cite{sequel} we study in more detail the PWC measure of Example~\ref{s:1.1.2} in the case that the field $h$ is the restriction to the leaves of the discrete Gaussian free field $h'$ on $\bbT_n$, namely when $V(s)=s^2$ in~\eqref{e:1.26}.

The strong FKG property of the law of $h'$ which still holds under the conditioning on positivity at all leaves, will then allow us to show that
\begin{equation}
\label{e:1.61}
	\Phi^V_n(A) = \Phi^{\{h_{k,\ell}, h_{n+1,n}\}}_n(A) + O(1)
	\quad ; \qquad A \subseteq \bbL_0^{(n)} \,,
\end{equation}
where $\Phi^V_n$ is the clustering function in Example~\ref{s:1.1.2}, while
$\Phi_n^{\{h_{k,\ell}\},h_{n+1,n}}$ is the second order branching clustering function of Example~\ref{s:1.2.4} with
\begin{equation}
	h_{k,\ell} := (\ln 2) \ell + \frac{3}{2} \ln^+ (\ell \wedge (k-\ell)) 
	\; ; \quad 0 \leq \ell \leq k \leq n+1 \,.
\end{equation}
As the conditions in Proposition~\ref{p:1.13} hold, the PWC measures associated with $\Phi_n^{\{h_{k,\ell}\},h_{n+1,n}}$ exhibit a wetting transition. 

By monotonicity of the canonical free energy in $\Phi$, domination~\eqref{e:1.61} and Corollary~\ref{c:1.9}, this implies that there is a wetting transition also for the PWC measures associated with $\Phi^V_n$. This gives an alternative proof to the Caputo-Velenik argument for the existence of a wetting transition for the DGFF on the tree subject to a hard wall (cf.~\cite{CV2000}), which is more constructive in nature. We note that one can have a similar comparison of $\Phi_n^V$ with a first order branching cluster functions w.r.t. the sequence $h_k = (\ln 2) k + \ln k$. The trouble is that this case falls exactly ``between'' the necessary and sufficient conditions and Example~\ref{sss:2.3} cannot be used to rule out or in the existence of a wetting transition.

Many questions concerning the PWC model and Examples~\ref{sss:2.3} and~\ref{s:1.2.4} in particular remain. Let us now mention some of them. When there is a wetting transition at $J^* \in (-\infty, \infty)$, a natural question is whether $\rho$ is continuous at $J^*$ or not, i.e. what the order of the wetting transition is. This question is directly related to the curvature of $\omega$ at $0$, as can be seen by~\eqref{eq:Phi-psif}. Other questions concern the stochastic geometry of pinned sites in the complete and partial wetting regime. These can be phrased in term of the fractal nature of the dry-set (e.g. typical number of dry points in a ball of radius $\alpha n$ in graph-distance, or correlations between the events that two sites are pinned. 

Both answers in the generality of the basic theory and for Examples~\ref{sss:2.3} \linebreak and~\ref{s:1.2.4} are desirable. The latter, because relation~\eqref{e:1.61} will likely allow us to deduce conclusions about the dry-set in the pinned random surfaces case, where many of the above questions remain open.

Lastly, while we treat the binary tree here, a more interesting underlying graph would naturally be a $d$-dimensional lattice. While these two graphs seem very different, a tree-like structure can be extracted from a $d$-dimensional box, by, e.g., repeatedly partitioning the box to $2^d$ sub-boxes, each having half the side-length of the original one. 
As in the tree case, this may provide additional tools for studying the wetting transition in the case of $d$-dimensional random surfaces, and in particular the $d=2$ case, where the current theory is quite lacking. We believe that the general theory, and Examples~\ref{sss:2.3} and~\ref{s:1.2.4} could be adapted to the lattice case, as well as the comparison between the latter and the DGFF when $d=2$, but we leave this to a future work.

\subsection*{Paper outline}
The remainder of the paper is organized as follows. In Section~\ref{s:1} we provide proofs for the general theory of wetting as stated in the introduction. Sections~\ref{s:2} and~\ref{s:3} include the proofs for the statements concerning PWC measures driven by first and second order branching clustering functions, respectively. Section~\ref{s:4} treats the case of clustering function defined in terms of capacity. 

\section{General theory of wetting}
\label{s:1}
In this section we provide proofs for the statements of the general theory developed in the Subsection~\ref{s:1.1}

\begin{proof}[Proof of Proposition~\ref{prop:Phi-FE-A}]
	Fix dyadic $\epsilon_r, \epsilon_\ell \in (0,1)$
	and set  $\epsilon = \frac{1}{2}\lb \epsilon_r +\epsilon_\ell\rb$.
Clearly both $\epsilon_r 2^{n-1}$ and $\epsilon_\ell
2^{n-1}$ are in $\bbN$ for all $n$ sufficiently large. 	 
 In the latter case  
 if $A_r$ is a subset of $\bbL_{0, \sfr}^{(n)}$ of cardinality $\epsilon_r 2^{n-1}$  and 
$A_\ell$ is a subset of $\bbL_{0, \sfl}^{(n)}$ of cardinality $\epsilon_\ell 2^{n-1}$, then $A = A_r\cup A_\ell$ 
has cardinality
$\epsilon 2^n$. Therefore, by monotonicity and in view of assumption~\eqref{eq:Phi-A},   
\be{eq:Phi-GrGl}
\bbW_n (\epsilon) \geq {\rm e}^{-2\gamma_n} 
\bbW_{n-1}(\epsilon_r)\bbW_{n-1} (\epsilon_\ell). 
\ee

Take $\epsilon = \epsilon_\ell = \epsilon_r$. 
Then standard sub-additivity arguments imply that $\omega (\epsilon )$ is well defined and satisfies 
\be{eq:Phi-lb-frf}
\ent{\epsilon}  
\geq \omega (\epsilon )\geq \omega_n (\epsilon ) - 2\sum_{n+1}^\infty \gamma_k 2^{-k}
> -\infty . 
\ee
where $\ent{\epsilon}$ is as in~\eqref{eq:Phi-ent}. Going back to \eqref{eq:Phi-GrGl} we infer that under assumption~\eqref{eq:Phi-A}, 
\[ 
\omega\lb \frac{\epsilon_{\sfr}+\epsilon_{\sfl}}{2}\rb \geq \frac{1}{2}\lb \omega (\epsilon_{\sfr} ) + 
 \omega (\epsilon_{\sfl} )\rb, 
\]
for general dyadic $\epsilon_{\sfr/\sfl}\in [0,1]$. By iterations, this  is concavity.
\end{proof}

Next, we prove
\begin{proof}[Proof of Theorem~\ref{thm:Phi-wet}]
Convexity and hence continuity of the right hand side \linebreak of~\eqref{eq:Phi-psif} follows by concavity of $\omega$. At the same time, abbreviating $\omega_n \equiv \omega_n^{\Phi_n}$, we have 
\[ 
\zeta_n(J) =  \frac{1}{2^n} \ln 
\sum_{\eps \in [0,1]} 
{\rm e}^{2^n \left[ J\eps + \omega_n (\eps ) \right]} 
\]
Bounding the sum using the maximal summand we have
\begin{equation}
	\label{e:2.4}
\max_{\eps \in [0,1]} \lbr J\epsilon + \omega_n 
(\eps )\rbr 
\leq \zeta_n(J)
 \leq  \max_{\eps \in [0,1]} \lbr 
J\epsilon + \omega_n (\eps )\rbr 
+ \frac{n\ln 2}{2^n },
\end{equation}
with $\epsilon$ in the summation and maximum restricted to $2^{-n}\bbN_0$. 
Approximating an argmax of~\eqref{eq:Phi-psif} via a dyadic $\epsilon$ and using the convergence of $\omega_n$ to $\omega$ shows that the limit inferior of the lower bound in~\eqref{e:2.4} is at least the right hand side~\eqref{eq:Phi-psif}. On the other hand thanks to the second inequality in~\eqref{eq:Phi-lb-frf} we may replace $\omega_n$ by $\omega$ in the upper bound in~\eqref{e:2.4} at the cost of an error that tends to $0$ with $n$. This shows that the limit superior of the upper bound also tends to the right hand side of~\eqref{eq:Phi-psif}.
\end{proof}

\begin{proof}[Proof of Corollary~\ref{c:1.9}]
Since $\omega$ is concave and $\omega(0) = 0$, if $(-J) \geq \frac{\rmd^+}{\rmd \epsilon} \omega(0)$ then for all $\epsilon \in [0,1]$
\begin{equation}
\omega(\epsilon) \leq (-J)\epsilon \,.
\end{equation}
Also, by definition of the right derivative, if 
$(-J) < \frac{\rmd^+}{\rmd \epsilon} \omega(0)$ then in a right neighborhood of $0$ we must have
\begin{equation}
\omega(\epsilon) > (-J) \epsilon
\end{equation}
\end{proof}

\begin{proof}[Proof of Corollary~\ref{c:1.10}]
Let $J$ be such that the maximum in~\eqref{eq:Phi-psif} is attained uniquely at $\epsilon_0 \in [0,1]$. Then
for all $\delta > 0$, by continuity, the same maximum restricted further to $(\epsilon_0 - \delta, \epsilon_0+\delta)^\rmc$ must be smaller than $\zeta(J)-\eta$ for some $\eta > 0$. In particular, bounding the sum using the maximal summand and appealing to~\eqref{eq:Phi-lb-frf}, we have
\begin{equation}
\begin{split}
\zeta_n(J; \delta) :&= 
\frac{1}{2^n} \ln 
\sum_{|2^{-n}|A| - \epsilon_0| \geq \delta}
{\rm e}^{J \abs{A} - \Phi (A)} 
= 
\frac{1}{2^n} \ln 
\sum_{|\eps - \eps_0| \geq \delta} 
{\rm e}^{2^n \left[ J\eps + \omega_n (\eps ) \right]} \\
& \leq  \max_{|\eps - \eps_0| \geq \delta} \lbr 
J\epsilon + \omega_n (\eps )\rbr 
+ \frac{n\ln 2}{2^n }
\leq \zeta(J) - \eta
+ 2\sum_{n+1}^\infty \gamma_k 2^{-k}
 + \frac{n\ln 2}{2^n } 
\end{split}
\end{equation}

Since
\begin{equation}
	\frac{1}{2^{n}} \ln \rmP_n^{\Phi,J} \Big( \big|2^{-n}|A| - \epsilon_0 \big| \geq \delta \Big) 
	= \zeta_n(J; \delta) - \zeta_n(J) \,,
\end{equation}
we thus get
\begin{equation}
	\limsup_{n \to \infty} \, \ln \rmP_n^{\Phi,J} \Big( \big|2^{-n}|A| - \epsilon_0 \big| \geq 
	\delta \Big) \leq -\eta \,,
\end{equation}
which implies exponential convergence in probability to $\epsilon_0$ of $2^{-n} |A|$, and thanks to the boundedness of this quantity also in $L^1$.

In the same vein, for any $\Delta > 0$,
\begin{equation}
\zeta(J) + \epsilon_0 \Delta 
\leq \zeta(J+\Delta) \leq 
\max \Big\{\zeta(J) + (\epsilon_0 + \delta) \Delta \,,\,
\zeta(J) - \eta + \Delta \Big\} \,,
\end{equation}
Subtracting $\zeta(J)$ and then dividing by $\Delta$ and finally taking $\Delta \downarrow 0$ follows by $\delta \to 0$, shows that the right derivative of $\zeta$ exists at $J$ and equal to $\epsilon_0$. A symmetric argument shows the same for the left derivative.

Lastly, since $J \epsilon + \omega(\epsilon)$ is concave, if it has more than one maximizer on $[0,1]$, it must be a constant on some sub-interval of $[0,1]$, which implies that $\omega(\epsilon)$ is linear on that domain with slope $-J$. Since these sub-intervals must be disjoint for different $J$-s, there could be at most countably many of them.
\end{proof}

\section{Clustering via first order branching patterns}
\label{s:2}
In this section we provide proofs for the various statements in Subsection~\ref{sss:2.3}.

\subsection{Monotoncity}
\begin{proof}[Proof of Proposition~\ref{p:1.6}] 
For $k=0,\dots, n$, let $\beta_k = \sum_0^{k} b_j$ be the number of branching points in all the generations starting from generation $k$ (notice that we refer to the leaves as branching points). 
Alternatively, $b_k = \beta_k - \beta_{k-1}$. 
Of course, 
if $\abs{A} = N+1$, then $\beta_n (A) = \abs{\calB (A)}= N + N + 1 = 2N + 1$. Summing by parts, 
\be{eq:BP-repr}
 \Phi_n (A) = \sum_0^n h_k b_{k} + h = Nh_n - \sum_{k=0}^{n-1} (h_{k+1} -h_k )\beta_{k}(A) + h .
\ee
First we will show \eqref{eq:MC-a}. Let $A\prec C$ and let $\sigma: A \rightarrow C$ be the bijection from the definition of $A \prec C$. 
We will prove that $\beta_k (A) \geq \beta_k (C)$ by induction on $|A|=|C|=N+1$. 
If $N=0$ then $|\calB (A)| = |\calB (C)| = 0$ so $\beta_k (A) \geq \beta_k (C)$ trivially for all $k=0,\dots, n$.
Assume that $\beta_k (A) \geq \beta_k (C)$ for all $A \prec C$ of cardinality $N+1$. Let $A \prec C$, $|A|=|C|=N+2$. Label $A=\{1,\dots,N+2\}$, $C=\{\sigma_1, \dots, \sigma_{N+2}\}$. Define $m_i = \min_{i \neq j} |i \wedge j|$. Without loss of generality, assume $|1 \wedge 2| = m_1 = \min_i m_i$. Consider now $\hat A = A \setminus \lbr1\rbr$, $\hat C = C \setminus \lbr\sigma_1\rbr$. By restricting $\sigma$ to $\hat A$ we get $\hat A \prec \hat C$. By the induction hypothesis $\beta_k (\hat A) \geq \beta_k (\hat C)$ for all $k=0,\dots,n$. Since adding one leaf to $\hat A$ implies a single additional branching point at generation $m_i$ we get
\[ \beta_k (A) = 
\begin{cases}
\beta_k (\hat A) +1 & k \geq m_i \\
\beta_k (\hat A) & k < m_i
\end{cases}
\]
Denote $m^{C}_1=\min_{j \neq 1} |\sigma_1 \wedge \sigma_j|$. Then 
\[ \beta_k (C) = 
\begin{cases}
\beta_k (\hat C) +1 & k \geq m^{C}_i \\
\beta_k (\hat C) & k < m^{C}_i
\end{cases}
\]
Since $m_1 \leq m^{C}_1$,
\[ \forall k=0,\dots,n \quad \beta_k (A) \geq \beta_k (C) .\]
Since $h_k$ is non-decreasing, this implies \eqref{eq:MC-a}.
Moving on to \eqref{eq:MC-b}, let $A,C \subseteq \bbL_0$ be disjoint, $A\cap C = \emptyset$. Then
\[|A\cup C|=|A|+|C| \Longrightarrow \beta_n (A\cup C) = \beta_n (A) +\beta_n (C) + 1.\]
First, consider the case where $\calB (A) \cap \calB (C) = \emptyset$. Because $\calB (A), \calB (C) \subseteq \calB (A\cup C)$ we get $\calB (A \cup C) = \calB (A) \cup \calB (C) \cup \lbr z \rbr$ so:
\[b_{|z|} (A\cup C) = b_{|z|} (A) + b_{|z|} (C) + 1\]
\[\forall k\in \lbr 0,\dots,n \rbr, k\neq |z| \quad b_{k} (A\cup C) = b_{k} (A) + b_{k} (C)\]
Therefore
\[\forall k=0,\dots,n \quad \beta_k (A \cup C) \geq \beta_k (A) + \beta_k (C).\]
Now suppose that $\calB (A) \cap \calB (C) \neq \emptyset$. Define
\[\calB_{AC} := \calB (A) \cap \calB (C), \ |\calB_{AC}|=N>0\]
\[\calB_{A \Delta C} := \calB (A \cup C) \setminus (\calB (A) \cup \calB (C))\]
since $\calB (A), \calB (C) \subseteq \calB (A\cup C)$,
\begin{align*}
    |\calB (A) \cup \calB (C)| &= |\calB (A)| + |\calB (C)| - |\calB (A) \cap \calB (C)| \\
    &= \beta_n (A) + \beta_n (C) - N
\end{align*}
and
\begin{align*}
    |\calB_{A \Delta C}| &= |\calB (A\cup C)| - |\calB (A) \cup \calB (C)| \\
    &= \beta_n (A \cup C) - (\beta_n (A) + \beta_n (C) - N) \\
    &= N+1 \,.
\end{align*}
Every $z\in \calB_{AC}$ is counted twice in $\beta_k (A) + \beta_k (C)$, for $k \geq |z|$, but only once in $\beta_k (A \cup C)$.
On the other hand, $z\in \calB_{A \Delta C}$ are only counted in $\beta_k (A \cup C)$, $k\geq |z|$. Hence, we get \eqref{eq:MC-b} by constructing a bijection $\tau:  \calB_{A \Delta C} \rightarrow \calB_{AC} \cup \lbr t \rbr$ satisfying:
\[\forall z\in \calB_{A \Delta C} \quad |\tau (z)| \geq |z| \text{ or } \tau (z) = t\]
where $t$ is a placeholder node.

Label $\calB_{A \Delta C}=\lbr 1,\dots, N+1 \rbr$.

\begin{lem}
\label{lem:AC-DAC-a}
Any $x\in \calB_{AC}$ has no ancestor in $\calB_{A \Delta C}$.
\end{lem}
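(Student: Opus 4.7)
The plan is to prove the contrapositive: suppose $y$ is a strict ancestor of some $x \in \calB_{AC}$ with $y \in \calB(A \cup C)$, and show that $y$ must lie in $\calB(A) \cup \calB(C)$, so $y \notin \calB_{A \Delta C}$. (A strict ancestor is needed because $x$ itself is of course in $\calB(A) \cup \calB(C)$ and the statement is about \emph{ancestors} of $x$, which I interpret strictly.)

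The first step is to observe that $x \in \calB_{AC}$ cannot be a leaf: a leaf $u$ lies in $\calB(A)$ only if $u \in A$, and similarly for $C$, so $x \in \calB(A) \cap \calB(C)$ together with $A \cap C = \emptyset$ forces $x$ to be an internal node. Consequently, by definition of $\calB(A)$ there exist distinct $u_1, u_2 \in A$ descending from the two distinct children of $x$, and similarly distinct $v_1, v_2 \in C$ descending from the two children of $x$. In particular, both $A$ and $C$ contain a descendant of $x$.

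Now let $y$ be a strict ancestor of $x$ with $y \in \calB(A \cup C)$. Then $y = w_1 \wedge w_2$ for some $w_1, w_2 \in A \cup C$ lying under the two distinct children of $y$. Since $x$ is strictly below $y$, the vertex $x$ lies under exactly one child of $y$, call it $c_1$; all descendants of $x$ (in particular $u_1, u_2, v_1, v_2$) are then descendants of $c_1$. One of $w_1, w_2$, say $w_1$, must lie under the \emph{other} child $c_2$ of $y$. Split into two cases according to whether $w_1 \in A$ or $w_1 \in C$: if $w_1 \in A$, then $w_1$ and $u_1$ sit under different children of $y$, so $w_1 \wedge u_1 = y$ and $y \in \calB(A)$; if $w_1 \in C$, then analogously $w_1 \wedge v_1 = y$ and $y \in \calB(C)$. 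Either way $y \in \calB(A) \cup \calB(C)$, contradicting $y \in \calB_{A\Delta C}$.

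The argument is essentially a case analysis on which child of the purported ancestor $y$ contains $x$; the only subtlety is the leaf/non-leaf dichotomy at the beginning, which is what makes the disjointness of $A$ and $C$ enter the proof. I expect no substantive obstacle here — the statement is really an elementary feature of meets in a tree, and the main care is just to spell out why $\calB_{AC}$ contains no leaves so that genuine branching witnesses $u_1,u_2$ and $v_1,v_2$ are available below $x$.
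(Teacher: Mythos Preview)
Your proof is correct and follows essentially the same approach as the paper: both arguments take an element of $A$ (resp.\ $C$) sitting below $x$ and pair it with the witness under the other child of the putative ancestor to force that ancestor into $\calB(A)\cup\calB(C)$. The only cosmetic differences are that the paper already knows its witnesses for $z$ are one from $A$ and one from $C$ (since $z\in\calB_{A\Delta C}$), whereas you case-split on this at the end, and your preliminary discussion of why $x$ is not a leaf, while correct, is not strictly needed---any $x\in\calB(A)$ automatically has a descendant in $A$, namely any $u$ with $x=u\wedge v$.
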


\begin{proof}
Assume by way of contradiction that $\exists x\in \calB_{AC},z\in \calB_{A \Delta C}$ such that $z$ is an ancestor of $x$. 
By the definitions of $\calB_{AC}$ and $\calB_{A \Delta C}$ there exist $u_1, u_2, u_3\in A$, $v_1, v_2, v_3\in C$ such that $x=u_1\wedge u_2=v_1 \wedge v_2$ and $z=u_3 \wedge v_3$. We call a path $P=\langle s_1,\dots,s_k \rangle$ in $\bbT^{(n)}$ an ascending path if $|s_{i+1}|>|s_i|$ for all $i=1,\dots,k-1$. 
Since $z$ is an ancestor of $x$, there exists an ascending path $P_1=\langle x,\dots,z \rangle$. 
Denote by $z_1,z_2$ the direct children of $z$. 
Without loss of generality, assume $u_3$ is a descendant of $z_1$, $v_3$ is a descendant of $z_2$, and that $P_1=\langle x,\dots,z_1, z \rangle$. 
Let $P_2=\langle v_1,\dots,x \rangle$ be the ascending path from $v_1$ to $x$, and $P_3=\langle v_3,\dots,z_2, z \rangle$ be the ascending path from $v_3$ to $z$. 
Since $z_1\neq z_2$ and this is a tree, $\lbr P_2 \cdot P_1 \rbr \cap \lbr P_3 \rbr = \lbr z \rbr$, i.e. the simple paths from $v_1$ and $v_3$ to the root intersect for the first time at $z$ so $z=v_1 \wedge v_3$. This means that $z\in \calB (C)$ and therefore $z\notin \calB_{A \Delta C}$ which is a contradiction.
\end{proof}

\begin{lem}
\label{lem:AC-DAC-b}
\[\forall z_1 \neq z_2\in \calB_{A \Delta C}, \quad x=z_1\wedge z_2\in \calB_{AC}.\]
\end{lem}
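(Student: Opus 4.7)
The plan is to first establish a structural characterization of elements in $\calB_{A \Delta C}$. Specifically, I would show that if $z \in \calB_{A \Delta C}$, then one of the two child subtrees of $z$ contains (among elements of $A \cup C$) only elements of $A$, while the other contains only elements of $C$. This follows by combining three facts: $z \in \calB(A \cup C)$ forces descendants of $z$ lying in $A\cup C$ on both sides of $z$, while $z \notin \calB(A)$ forbids $A$-descendants on both sides, and similarly $z \notin \calB(C)$ forbids $C$-descendants on both sides. Since each child subtree must host at least one element of $A \cup C$, the two sides must split into a pure-$A$ side and a pure-$C$ side.

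Next, given distinct $z_1, z_2 \in \calB_{A \Delta C}$, I would rule out the case where one is a strict ancestor of the other. Arguing by contradiction, if $z_2$ is a strict descendant of $z_1$, then $z_2$ lies entirely within one child subtree of $z_1$, which by the structural step is either pure-$A$ or pure-$C$. Hence every $A \cup C$-descendant of $z_2$ belongs to only one of $A$ or $C$, which combined with $z_2 \in \calB(A \cup C)$ forces $z_2 \in \calB(A)$ or $z_2 \in \calB(C)$, contradicting $z_2 \in \calB_{A \Delta C}$. The symmetric case is identical.

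Having excluded comparability, $z_1$ and $z_2$ must lie in distinct child subtrees of their meet $x = z_1 \wedge z_2$. By the structural characterization, the subtree rooted at $z_1$ contains both $A$-descendants and $C$-descendants, and the same holds for the subtree rooted at $z_2$. Consequently $x$ has $A$-descendants through each of its two children, giving $x \in \calB(A)$, and symmetrically $x \in \calB(C)$. Therefore $x \in \calB(A) \cap \calB(C) = \calB_{AC}$, as claimed.

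The first step is essentially a definitional unpacking; the main (though mild) obstacle is the second step, where one needs to be careful in exploiting the $A$/$C$ separation on either side of each $z_i$ to exclude the ancestor-comparable configuration. Once that is done, the conclusion falls out mechanically from the structural description.
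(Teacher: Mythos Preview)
Your proof is correct. The paper's argument is considerably terser: it simply writes $z_i = u_i \wedge v_i$ with $u_i \in A$ and $v_i \in C$ (which is immediate from $z_i \in \calB(A\cup C)\setminus(\calB(A)\cup\calB(C))$), and then asserts that $u_1,u_2$ lie in different branches of $x$ and likewise $v_1,v_2$, giving $x = u_1\wedge u_2 = v_1\wedge v_2 \in \calB_{AC}$. The paper does not explicitly exclude the case where one $z_i$ is an ancestor of the other; your structural characterization (one child subtree of each $z_i$ is pure-$A$, the other pure-$C$) handles this cleanly and makes the incomparability step airtight. So your approach is a bit longer but more self-contained, while the paper's is a two-line sketch that relies on the reader seeing why the ancestor configuration cannot occur.
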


\begin{proof}
$z_1=u_1\wedge v_1$, $z_2=u_2\wedge v_2$ where $u_1, u_2\in A$ and $v_1, v_2\in C$. 
Then $x$ is an ancestor of $v_1$ and $v_2$ and they are in different branches starting at $x$ (symmetrically, also for $u_1$ and $u_2$). 
Hence $x\in \calB_{AC}$.
\end{proof}

\begin{lem}
\label{lem:AC-DAC-c}
\[\forall x_1 \neq x_2\in \calB_{AC}, \quad x=x_1\wedge x_2\in \calB_{AC}.\]
\end{lem}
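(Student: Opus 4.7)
The plan is to split on whether $x_1$ and $x_2$ are comparable in the genealogical order. If one of them, say $x_1$, is an ancestor of the other $x_2$, then $x = x_1 \wedge x_2 = x_1$, which lies in $\calB_{AC}$ by assumption. The interesting case is when $x_1$ and $x_2$ are incomparable, so that $x$ is a strict proper ancestor of both, and $x_1, x_2$ lie below two distinct children of $x$.

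In that case, I would exploit the very definition of $\calB(A)$ to extract witnesses. Since $x_1 \in \calB_{AC} \subseteq \calB(A)$, there exist $u_1, u_1' \in A$ with $u_1 \wedge u_1' = x_1$, which forces $u_1$ and $u_1'$ to be descendants of $x_1$, and in particular of $x$ through the child of $x$ sitting above $x_1$. Likewise, $x_2 \in \calB(A)$ provides $u_2, u_2' \in A$ which descend from $x$ through its other child. Consequently $u_1 \wedge u_2 = x$, so $x \in \calB(A)$.

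The argument is entirely symmetric in $A$ and $C$: repeating it with witnesses $v_1, v_1' \in C$ for $x_1$ and $v_2, v_2' \in C$ for $x_2$ yields $v_1 \wedge v_2 = x$, hence $x \in \calB(C)$. Together these two conclusions give $x \in \calB(A) \cap \calB(C) = \calB_{AC}$, completing the proof.

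There is no real obstacle here; the only thing to be careful about is the degenerate case of comparable $x_1, x_2$, which must be handled first so that one can subsequently appeal to the ``different child'' geometry that makes the common-ancestor identity $u_1 \wedge u_2 = x$ (and likewise for the $v$'s) automatic. In spirit, the lemma simply records that $\calB_{AC}$ is closed under the meet operation $\wedge$ on $\bbT^{(n)}$.
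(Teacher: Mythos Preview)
Your proof is correct and follows essentially the same route as the paper: pick witnesses in $A$ (resp.\ $C$) for $x_1$ and $x_2$, and use that they sit below distinct children of $x$ to conclude $x\in\calB(A)$ (resp.\ $x\in\calB(C)$). In fact you are slightly more careful than the paper, which tacitly assumes $x_1$ and $x_2$ are incomparable; your explicit treatment of the comparable case (where $x=x_1$ or $x=x_2$ trivially) fills a small gap in the paper's argument.
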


\begin{proof}
By definition of $\calB_{AC}$, $x_1=u_1 \wedge u_2=v_1 \wedge v_2$ and $x_2=u_1' \wedge u_2'=v_1' \wedge v_2'$ where $u_1,u_2, u_1',u_2'\in A$ and $v_1,v_2, v_1',v_2'\in C$. If $x_1=x_2$ then the lemma is trivial. Otherwise, $x_1 \neq x_2$. Since $x_1 \neq x_2$ and $x$ is the lowest (closest to the leaves) common ancestor of $x_1$ and $x_2$, $\lbr u_1,u_2,v_1,v_2 \rbr$ and $\lbr u_1',u_2',v_1',v_2' \rbr$ are in different branches of $x$ so $x=u_1\wedge u_1'=v_1\wedge v_1'\in \calB_{AC}$.
\end{proof}

\begin{lem}
\label{lem:AC-DAC-d}
\[\forall x\in \calB_{AC}, \exists z_1 \neq z_2\in (\calB_{A \Delta C} \cup \calB_{AC}) \setminus \lbr x \rbr : x=z_1 \wedge z_2\]
\end{lem}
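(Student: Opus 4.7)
The plan is to reduce the statement to a single auxiliary claim, and then deduce the lemma by applying it in the two subtrees hanging below $x$.

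Auxiliary claim: \emph{For any vertex $y \in \bbT^{(n)}$ such that the subtree $T_y$ rooted at $y$ contains at least one element of $A$ and at least one element of $C$, there exists $z \in T_y$ with $z \in \calB_{AC} \cup \calB_{A \Delta C}$.} Given this claim, the lemma follows almost immediately: since $x \in \calB_{AC} = \calB(A) \cap \calB(C)$, each of the two children $x_1,x_2$ of $x$ has the property that $T_{x_i}$ contains both an $A$-point and a $C$-point. Applying the claim to $y = x_i$ yields $z_i \in T_{x_i} \cap (\calB_{AC} \cup \calB_{A \Delta C})$ for $i=1,2$. Since $z_1, z_2$ lie in disjoint subtrees rooted at the two distinct children of $x$, we have $z_1 \neq z_2$, $z_i \neq x$, and $z_1 \wedge z_2 = x_1 \wedge x_2 = x$, as required.

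To prove the auxiliary claim I would use induction on $N := |(A \cup C) \cap T_y|$. The base case $N = 2$ is immediate: the unique branching point is the meet of the single $A$-point and the single $C$-point, which has exactly one descendant of each type, so it lies in $\calB_{A \Delta C}$. For the inductive step, I would let $r$ be the lowest common ancestor in $T_y$ of $(A \cup C) \cap T_y$, so that both children subtrees of $r$ contain $(A \cup C)$-points and hence $r \in \calB(A \cup C)$. Using the disjoint decomposition
\begin{equation}
\calB(A \cup C) = \calB_{AC} \cup \calB_{A \Delta C} \cup (\calB(A) \setminus \calB(C)) \cup (\calB(C) \setminus \calB(A)),
\end{equation}
if $r$ lies in the first two parts we are done with $z = r$; otherwise, by symmetry assume $r \in \calB(A) \setminus \calB(C)$. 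Then both children of $r$ have $A$-descendants while exactly one child, call it $r^*$, carries a $C$-descendant (the other child has no $C$-descendant, since $r \notin \calB(C)$). Then $T_{r^*}$ still contains both $A$- and $C$-points, while the sibling of $r^*$ contributes some $A$-point to $T_y$ that is no longer in $T_{r^*}$, giving $|(A \cup C) \cap T_{r^*}| < N$. I would then apply the induction hypothesis to $r^*$ to produce the desired $z \in T_{r^*} \subseteq T_y$.

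The main obstacle is the bookkeeping in the inductive step: one must verify carefully (a) that when $r \notin \calB_{AC} \cup \calB_{A \Delta C}$ the structure of the descendants under $r$ forces exactly one of its children to host all the $C$-points, and (b) that passing to that child strictly decreases the quantity $|(A \cup C) \cap T_y|$ without losing the hypothesis of having both $A$- and $C$-points in the new subtree. Both rely on the tight control given by the disjoint decomposition of $\calB(A \cup C)$ and the fact that we chose $r$ as a lowest common ancestor. Once this is in place the argument terminates in at most $N$ steps and the rest of the lemma is a direct consequence.
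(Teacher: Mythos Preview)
Your proof is correct and follows essentially the same approach as the paper's. Both arguments observe that each child $x_i$ of $x$ carries both an $A$-leaf and a $C$-leaf (since $x\in\calB(A)\cap\calB(C)$), and then descend inside $T_{x_i}$ until a vertex of $\calB_{AC}\cup\calB_{A\Delta C}$ is found; the paper phrases the descent iteratively by updating a pair $(u,v)\in A\times C$ and tracking $u\wedge v$, while you package the same descent as an induction on $|(A\cup C)\cap T_y|$ using the LCA of that set, which is arguably a cleaner bookkeeping device but not a different idea.
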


\begin{proof}
Let $x\in \calB_{AC}$. $x=u_1\wedge u_2=v_1\wedge v_2$ for $u_1, u_2\in A$ and $v_1, v_2\in C$. 
Denote $x_1, x_2$ the direct children of $x$. 
Without loss of generality, $x_1$ is an ancestor of $u_1, v_1$ and $x_2$ is an ancestor of $u_2, v_2$. 
Then $|u_1\wedge v_1| \leq |x_1| < |x|$, $|u_2 \wedge v_2| \leq |x_2| < |x|$.
For $i=1, 2$, denote $z_i^{(0)} = u_i \wedge v_i$.
If $z_i^{(0)} \notin (\calB_{A \Delta C} \cup \calB_{AC}) \setminus \lbr x \rbr$ then either $z_i^{(0)} \in \calB (A) \setminus \calB (C)$ or $z_i^{(0)} \in \calB (C) \setminus \calB (A)$. Without loss of generality assume that $z_i^{(0)} \in \calB (A) \setminus \calB (C)$.
Then there exists $u_i^{(1)} \in A$ such that $z_i^{(1)} = u_i^{(1)} \wedge v_i$ is a descendant of $z_i$ satisfying $|z_i^{(1)}| < |z_i^{(0)}|$. 
If $z_i^{(1)} \notin (\calB_{A \Delta C} \cup \calB_{AC}) \setminus \lbr x \rbr$ then either $z_i^{(1)} \in \calB (A) \setminus \calB (C)$ or $z_i^{(1)} \in \calB (C) \setminus \calB (A)$. Without loss of generality assume that $z_i^{(1)} \in \calB (A) \setminus \calB (C)$.
Then there exists $u_i^{(2)} \in A$ such that $z_i^{(2)} = u_i^{(2)} \wedge v_i$ is a descendant of $z_i^{(1)}$ satisfying $|z_i^{(2)}| < |z_i^{(1)}|$.
We can continue this process, choosing $z_i^{(0)}, \dots, z_i^{(k)}$ such that $|z_i^{(j + 1)}| < |z_i^{(j)}|$, until a node satisfying $z_i^{(k)} \in (\calB_{A \Delta C} \cup \calB_{AC}) \setminus \lbr x \rbr$ is found.
Finally, such a node can be found since if $|z_i^{(k)}|=1$ then $z_i^{(k)}$ only has two leaf descendants, one of which is in $A$ while the other is in $C$, and therefore $z_i^{(k)} \in \calB_{A \Delta C}$.
Overall, there exist $z_1, z_2 \in (\calB_{A \Delta C} \cup \calB_{AC}) \setminus \lbr x \rbr$ which are descendants of $x_1$ and $x_2$ respectively such that $x = z_1 \wedge z_2$.
\end{proof}

From the last few lemmas we deduce that the subtree induced by $\calB_{AC} \cup \calB_{A \Delta C}$, denoted $\calT_{AC}^{(0)}$, is a binary tree in which $\calB_{A \Delta C}$ are the leaves and $\calB_{AC}$ are inner vertices (there might be other non-branching inner vertices) such that all vertices in $\calB_{AC}$, and only them, are branching points of $\calT_{AC}^{(0)}$.  
This observation allows us to construct a bijection $\tau$ as described above. 
Let $\langle x_1,\dots,x_N \rangle$ be an ordering of $\calB_{AC}$ satisfying $|x_i| \leq |x_{i+1}|$ for all $i=1,\dots,N-1$. Let's start with $\tau (x_1)$. 
By the minimality of $|x_1|$, it has two leaf descendants - $z_{1}^{(1)},z_{2}^{(1)}\in \calB_{A \Delta C}$. Set $\tau (z_{1}^{(1)}) = x_1$ and $\calT_{AC}^{(1)}$ to be the tree induced by $\calB_{AC}^{(1)} \cup \calB_{A \Delta C}^{(1)}$ where we set $\calB_{AC}^{(1)}=\calB_{AC} \setminus \lbr x_1 \rbr$ and  $\calB_{A \Delta C}^{(1)} = \calB_{A \Delta C} \setminus \lbr z_{1}^{(1)} \rbr$. 
This means that the vertices of $\calT_{AC}^{(1)}$ no longer match the original branching pattern, but this does not affect our construction of $\tau$. 
Notice that the new tree still satisfies all the previous statements with the new $\calB_{AC}^{(1)}$ and $\calB_{A \Delta C}^{(1)}$. 
We can continue this process, suppose we set $\tau (z_{1}^{(1)}) = x_1,\dots, \tau (z_{1}^{(k)}) = x_k$, and the corresponding $\calT_{AC}^{(k)}$, $\calB_{AC}^{(k)}$ and $\calB_{A \Delta C}^{(k)}$. 
As before, by the minimality of $|x_{k+1}|$ in $\lbr x_{k+1},\dots,x_N\rbr $ it has two leaf descendants in $\calT_{AC}^{(k)}$; $z_{1}^{(k+1)}, z_{2}^{(k+1)}\in \calB_{A \Delta C}^{(k)}$. 
Define $\tau (z_{1}^{(k+1)}) = x_{k+1}$ and $\calT_{AC}^{(k+1)}$, $\calB_{AC}^{(k+1)} = \calB_{AC}^{(k)} \setminus \lbr x_{k+1} \rbr$ and $\calB_{A \Delta C}^{(k+1)} = \calB_{A \Delta C}^{(k)} \setminus \lbr z_1^{(k+1)} \rbr$. 
By construction, for all $i\in \lbr 1,\dots,N \rbr$, $z_{1}^{(i)}$ is a descendant of $\tau (z_{1}^{(i)})$ in $\calT_{AC}$ so,
\be{eq:B1-tau-property}
\forall i\in \lbr 1,\dots,N \rbr, \quad  |\tau (z_{1}^{(i)})| > |z_{1}^{(i)}|
\ee
Since $|\calB_{A \Delta C}| = |\calB_{AC}| +  1$, after assigning $t$ to the remaining $z \in \calB_{A \Delta C}$ we get the required bijection and \eqref{eq:MC-b} is implied.
\end{proof}

\subsection{Existence of free energy}

We begin by proving the existence and finiteness of the free energy.
\begin{proof}[Proof of Proposition~\ref{p:1.8}]
The ``if'' part follows from the general statement of \linebreak Proposition~\ref{prop:Phi-FE-A} by choosing $\gamma_n = h_n$ which is summable if \eqref{eq:Lev1Exist} holds.
For the other direction, by monotonicity of $h$ it is enough to consider $\epsilon$ of the form $\epsilon = 2^{-m_0}$; $m_0\in\bbN_0$.
Furthermore, since $h_n$ is assumed to be non-decreasing, we can minimize $\Phi_n$ by performing all branchings as close to the leaves as possible.
That is, by choosing $\underline{b}_n = (2^{n - m_0 - 1}, \dots, 4,2,1,0, \dots, 0)$. 
The next lower bound of $\Phi (A)$ follows from this reasoning:
 \[ 
 \Phi_n (A ) \geq 
  \sum_{\ell=0}^n h_\ell b_\ell (A ) \geq \sum_{\ell= 0}^{n-m_0} 2^{n-m_0 - \ell} h_\ell 
  = 2^{n-m_0} \sum_{\ell= 0}^{n-m_0} 2^{- \ell} h_\ell = a_0 \sum_{\ell= 0}^{n-m_0} 2^{- \ell} h_\ell
 \] 
 for every $A\subseteq \bbL^{(n)}$ of cardinality $a_0 = 2^{n-m_0}$. The case $a_0 = 0$ is trivial. Otherwise, since the total 
 number of such $A$-s is 
 \be{eq:WT-ent-mnot}
 \binom{2^n}{2^{-m_0} 2^n} 
 \sim   {\rm e}^{a_0 m_0 \ln 2} , 
 \ee
it follows that $\omega (\epsilon_0 ) = -\infty$ for any $m_0\in \bbN_0$, 
whenever the series in \eqref{eq:Lev1Exist} diverge. 
\end{proof}

\subsection{Sufficient condition for a wetting transition} 
Henceforth we are going to assume that condition~\eqref{eq:Lev1Exist} holds. In particular, this condition implies that $2^{-n} h_n \to 0$ as $n \to \infty$. Therefore the canonical partition function $\omega$ will not be changed if we take $h = 0$ instead of $h=h_n$ in the definition of $\Phi_n$ as a first ordering clustering function. While $\Phi_n$ is no longer monotone, we shall not make use of this property in the computations below. On the other hand, this redefinition will make these computations slightly simpler.

For what follows, if $\ul b = (b_k)_{k=0}^n$ is branching pattern of the subtree $\cT(A)$ induced by $A \subseteq \bbL_0^{(n)}$ in
$\bbT^{(n)}$, then we shall write $A \sim \ul b$. 
Given such sub-tree we shall denote by $a_\ell$ the total population at age $\ell$ or older, so that for $l \in [0,n]$, 
\be{eq:B1-ak-bk-relationship}
a_\ell = 1 +\sum_{k=\ell +1}^n b_k \,,
\; \ell = 0, \dots, n \,.
\ee
Notice that $a_0 = b_0 = |A|$.
Given $n \geq 1$ and $a_0$, we say that a sequence $\ub = \lb b_0 , 
\dots , b_n\rb$ is an {\em admissible branching pattern} for $a_0$ and write 
$\ub\sim a_0$ if there exists $A \subseteq \bbL_0^{(n)}$ such that $|A| = a_0$ and $\ul b$ is the branching pattern corresponding to $A$. It is an admissible branching pattern if it is admissible for some $a_0$. It is not difficult to check that $\ul b = (b_k)_{k=0}^n$ is admissible if and only if $b_k \in \bbN$ for all $k \in [0,n]$,
\be{eq:L1-comp} 
 a_0 = 1 +\sum_{j=1}^{n} b_j \,,
 \qquad \text{and} \qquad 
 b_k \leq a_k = 1+\sum_{j=k+1}^n b_j 
 \,,\,\, \forall k=1, \dots, n  \,.
\ee
We can therefore write the canonical partition function as
\be{eq:L1-Gpf} 
\bbW_n (a_0 ) := 
\sum_{\ub\sim a_0} \ent{\underline b} {\rm e}^{-\Phi_n (\ub)}\,,
\ee
where $\Phi_n(\ul b) = {\rm e}^{-\sum_{\ell = 0}^n h_\ell b_\ell }$ and 
\begin{equation}
\ent{\underline b} := \# \{A \subseteq \bbL^{(n)}_0 :\: A \sim \underline b \} \,.
\end{equation}

\begin{lem} \label{lem: entropy of first order branching pattern}
    The entropy of a branching pattern $\ul{b}$ is given by
    \begin{align} \label{eq: entropy of first order branching pattern}
        \ent{ \ul{b} } = \prod_{k=1}^n \binom{a_k}{b_k} 2^{a_k -b_k}\,.
    \end{align}
\end{lem}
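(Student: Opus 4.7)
The plan is to construct the induced subtree $\calT(A)$ level by level from the root down, and count the number of valid extensions at each step.

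The first step will be to record that for non-empty $A \subseteq \bbL_0^{(n)}$, the map $A \mapsto \calT(A)$ is a bijection onto the collection of rooted subtrees of $\bbT^{(n)}$ in which every internal vertex has at least one child. Under this identification, the branching points $\calB_k(A)$ for $k \geq 1$ are precisely those level-$k$ vertices of $\calT(A)$ both of whose children lie in $\calT(A)$, while the $a_k$ vertices of $\calT(A)$ at level $k$ coincide with $\{v|_k : v \in A\}$, consistently with~\eqref{eq:B1-ak-bk-relationship}.

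Next I will carry out the top-down enumeration. At level $n$ there is a single vertex, the root (so $a_n = 1$). Passing from level $k$ to level $k-1$ amounts to two independent choices: (i) pick which $b_k$ of the $a_k$ currently present vertices are branching points, contributing $\binom{a_k}{b_k}$ possibilities; and (ii) for each of the remaining $a_k - b_k$ non-branching vertices, choose which of its two children lies in $\calT(A)$, contributing a factor of $2^{a_k - b_k}$. A quick bookkeeping check, $2 b_k + (a_k - b_k) = a_k + b_k = a_{k-1}$, confirms that the produced level-$(k-1)$ population matches the pattern. Multiplying these factors over $k = n, n-1, \dots, 1$ delivers the claimed formula.

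The only substantive point is justifying that this top-down procedure is a bijection onto the admissible $A$'s with branching pattern $\ul b$. Injectivity is immediate, since each sequence of choices uniquely determines $\calT(A)$ and hence $A = \calT(A) \cap \bbL_0^{(n)}$. Surjectivity reduces to verifying that any vertex of $\calT(A)$ designated ``non-branching'' at level $k \geq 1$ must have exactly one — rather than zero — of its children in $\calT(A)$; this is forced because every vertex of $\calT(A)$ is an ancestor of some leaf of $A$, while having both children in $\calT(A)$ would make it a branching point. This is the main (and really the only) point I expect to require care; the edge case $A = \emptyset$, which is not strictly covered by the definition of $a_k$, is absorbed via the convention $\binom{0}{0}2^0 = 1$.
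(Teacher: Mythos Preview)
Your proposal is correct and follows essentially the same top-down, level-by-level counting argument as the paper: at each level $k$ one chooses which $b_k$ of the $a_k$ vertices branch and then which child each of the remaining $a_k-b_k$ non-branching vertices keeps. Your write-up is in fact more careful than the paper's in spelling out the bijection and the consistency check $a_{k-1}=a_k+b_k$.
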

\begin{proof}
    As previously stated, a first order branching pattern $\ul{b}$ completely determines the number of vertices in each layer of the induced sub-tree $a_k$.
    At each layer $k$, there are $\binom{a_k}{b_k}$ ways to choose the branching points of the level.
    Then, there are $2^{a_k - b_k}$ ways to choose the descendent of each non-branching vertex.
    These decisions can be performed independently of other layers, only adhering to the required $b_k$ branching points out of $a_k$, so the total number of subsets $A \subseteq \bbL_0$ such that $A \sim \ul{b}$ is the product of all terms as in \eqref{eq: entropy of first order branching pattern}.
\end{proof}
\begin{proof}[Proof of Proposition~\ref{p:1.9}]
For $a_0 \in [0,2^n]$ we may write
\be{eq:Gn-upper} 
\begin{split} 
 \bbW_n (a_0 )  = \sum_{\ub \sim a_0} \ent{\underline b} {\rm e}^{-\Phi_n (\ub )} & = 
 \sum_{\ub\sim a_0 } \prod_{k=1}^n \binom{a_k}{b_k} 2^{a_k -b_k} {\rm e}^{-\sum_{\ell = 0}^n h_\ell b_\ell }\\
 & = 2^{n+2} e^{-a_0(2\ln 2 + h_0)} \sum_{\ub \sim a_0} \prod_{k=1}^n \binom{a_k}{b_k} {\rm e}^{-\sum_{\ell = 1}^n g_\ell b_\ell } \,,
 \end{split}
\end{equation}
where $g$ is as in~\eqref{e:gk b1} and we used summation by parts:
\begin{equation}
\label{e:3.10}
 \sum_{k=1}^n (a_k - b_k ) = (n+2) - 2a_0 + \sum_{k=1}^n k b_k \,.
\end{equation}
Using that
\begin{equation}
\prod_1^{n} \binom{a_k}{b_k} \leq \prod_1^{n} \binom{a_{k-1}}{b_k} = \frac{a_0 !}{b_n!\dots b_1!} \,,
\end{equation}
the right hand side of~\eqref{eq:Gn-upper} is further upper bounded by 
\begin{equation}
2^{ n+2} e^{- a_0(2\ln 2 + h_0)} \Big(\sum_{k=1}^n {\rm e}^{-g_k}\Big)^{a_0}
= 2^{n+2} \exp \Big(\!-a_0\Big( 2 \ln 2 + h_0  - \ln \sum_{k=1}^n {\rm e}^{-g_k }\Big) \Big) \,.
\end{equation}
Plugging in $a_0 = \epsilon 2^n$ and using Theorem~\ref{prop:B2-free-energy-exist}, we get
\begin{equation}\label{eq:Jstar-upper} 
J^* = 
    - \lim_{\epsilon \to 0} \lim_{n\to\infty}\frac{\ln \bbW_n (\epsilon 2^n)}{ \epsilon 2^n} 
    \geq 
 2 \ln 2 + h_0 - \ln \sum_{k=1}^n e^{-g_k} 
 = 2 \ln 2 + h_0 - \ln \kappa_1 \,,
\end{equation}
with $\kappa_1$ as in the statement of the theorem.
\end{proof}
\subsection{Necessary condition for a wetting transition.}
The necessary condition will follow directly from Lemma~\ref{l:3.5} below. Recall that the sequence $g$ is related to $h$ as in~\eqref{e:gk b1}.

\begin{lem}
\label{l:3.5}
Let $(h_k)_{k=0}^\infty$ be a non-decreasing sequence and that~\eqref{eq:Lev1Exist} holds. Suppose that $g_k \geq 0$ for all $k \geq 0$. Then for all small enough $s > 0$ such that $e^s - 1$ is dyadic,
\begin{equation}
\label{e:3.14}
	-J^* \geq \ln \frac{1}{s} - \big(s+O(s^2)\big) \hat{g}(s) - C \,. 
\end{equation}
\end{lem}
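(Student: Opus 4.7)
The plan is to apply Corollary~\ref{c:1.9}, which expresses $-J^\ast$ as $\lim_{\epsilon\downarrow 0} \lim_{n\to\infty} (\epsilon 2^n)^{-1}\ln \bbW_n(\epsilon 2^n)$, and to lower-bound $\bbW_n(a_0)$ by the contribution of a single, carefully chosen ``geometric'' branching pattern $\ul b^\ast$. Starting from the identity~\eqref{eq:Gn-upper}, keeping one summand yields
\[
 \ln \bbW_n(a_0) \;\geq\; (n+2)\ln 2 \;-\; a_0(2\ln 2 + h_0) \;+\; \sum_{k=1}^n \ln\binom{a_k^\ast}{b_k^\ast} \;-\; \sum_{\ell=1}^n g_\ell\, b_\ell^\ast,
\]
so the task is to design $\ul b^\ast$ so that the binomial entropy yields a term of size $a_0\ln(1/s)$ while $\sum_\ell g_\ell b_\ell^\ast$ is essentially $s\,a_0 \wh g(s)$.

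The construction is as follows. Let $q:= \rme^s - 1$; set $L := \lfloor s^{-1}\ln a_0 \rfloor$ and choose $\ul b^\ast$ so that the level-populations $a_\ell^\ast$ of the induced subtree satisfy $a_\ell^\ast \approx a_0 \rme^{-s\ell}$ for $\ell \leq L$ and $a_\ell^\ast = 1$ for $\ell > L$ (i.e., a single path from the root down to level $L$, then geometric branching of ratio $\rme^s$ down to the leaves). This forces $b_\ell^\ast = a_{\ell-1}^\ast - a_\ell^\ast \approx q\, a_\ell^\ast$ for $1\le \ell\le L$. The dyadicity of $q = \rme^s-1$ is the exact integrality condition needed for such a pattern to be realized with integer $a_\ell^\ast$ and admissible $b_\ell^\ast \leq a_\ell^\ast$, and it is the main technical obstacle: I would exploit it by taking $a_0$ to be a high power of the denominator of $q$ (taking $\epsilon\downarrow 0$ along such a sub-sequence, which is harmless for the inner limit), so $a_\ell^\ast$ stays integer-valued and the relation $b_\ell^\ast = q\, a_\ell^\ast$ holds exactly.

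Next, one collects terms. From Lemma~\ref{lem: entropy of first order branching pattern} and the summation-by-parts identity~\eqref{e:3.10}, the factor $2^{a_k-b_k}$ contributes $(n+2-2a_0)\ln 2 + (\ln 2)\sum_\ell \ell\, b_\ell^\ast$, and the second piece cancels exactly with the $(\ln 2)\sum_\ell \ell\, b_\ell^\ast$ arising from the decomposition $h_\ell = g_\ell + (\ln 2)\ell$. Stirling's estimate gives $\ln\binom{a_k^\ast}{b_k^\ast} = a_k^\ast H(q) + O(\ln a_k^\ast)$ with $H(q) = q\ln(1/q) + q + O(q^2) = q\bigl(\ln(1/s) + 1 + O(s)\bigr)$, and the geometric sum $\sum_{k=1}^L a_k^\ast$ equals $(a_0-1)/q$ up to lower-order terms; multiplying yields $(a_0-1)\bigl(\ln(1/s) + 1 + O(s)\bigr)$. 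For the $g$-term, nonnegativity of $g_\ell$ allows the clean estimate
\[
 \sum_{\ell=1}^L g_\ell\, b_\ell^\ast \;=\; q\sum_{\ell=1}^L g_\ell\, a_\ell^\ast \;\leq\; (s + O(s^2))\, a_0\, \wh g(s).
\]
Dividing by $a_0 = \epsilon 2^n$ and sending $n\to\infty$ (so that $(n+2)\ln 2/a_0 \to 0$) and then $\epsilon\downarrow 0$ produces
\[
 -J^\ast \;\geq\; \ln(1/s) \;-\; (s + O(s^2))\,\wh g(s) \;-\; C,
\]
where $C$ absorbs $2\ln 2 + h_0 - 1$ together with the $O(s)$ correction inside the entropy.

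The principal difficulty is thus the integrality of $\ul b^\ast$: without the dyadicity hypothesis on $\rme^s - 1$, one would need to round $a_\ell^\ast$ at each level and track cumulative rounding errors, whereas the dyadic assumption lets us pick an admissible pattern that realizes the geometric profile on the nose along a suitable sub-sequence of $a_0$. All remaining estimates (Stirling, geometric sums, the cancellation of the $\ell$-weighted term) are routine.
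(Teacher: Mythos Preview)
Your overall strategy---lower-bounding $\bbW_n$ by a single geometric branching pattern with ratio $1+q=e^{s}$, then reading off the entropy via Stirling and the $g$-cost as a Laplace-type sum---is exactly the paper's. But your specific construction (a single path from the root down to level $L$, then geometric branching below) has a genuine admissibility gap.

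With $a_0=\epsilon 2^{n}$ and $L=\lfloor s^{-1}\ln a_0\rfloor$, you get $L\sim s^{-1}n\ln 2$, so for any $s<\ln 2$ and large $n$ one has $L>n$: the construction asks for more levels than $\bbT^{(n)}$ has. Even capping $L$ at $n$, the profile $a_\ell^{\ast}\approx a_0 e^{-s\ell}$ violates the hard constraint $a_\ell\le 2^{n-\ell}$ once $\ell>\ln(1/\epsilon)/(\ln 2 - s)$, a threshold that does \emph{not} grow with $n$. So for fixed $\epsilon$ and $n\to\infty$, your geometric profile is inadmissible over essentially its entire range. Your integrality claim fails for the same underlying reason: dyadicity of $q$ makes the \emph{denominator} of $(1+q)^{\ell}$ a power of $2$, but the \emph{numerator} is odd and grows with $\ell$; hence $a_0(1+q)^{-\ell}$ cannot be an integer for all $\ell\le L$ when $a_0=\epsilon 2^{n}$ with fixed dyadic $\epsilon$ and $L\to\infty$.

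The paper fixes both issues by reversing your boundary choice: the geometric region has \emph{fixed} depth $j$ (set $t_k=t:=e^{s}-1$ for $k\le j$), and the levels above are filled by a \emph{full} binary tree ($t_k=1$ for $j<k\le n$), not a single path. Then $a_j=2^{n-j}$, $a_0=(1+t)^{j}2^{n-j}$, and $\epsilon_j=((1+t)/2)^{j}$ is a fixed dyadic independent of $n$; admissibility holds since $(1+t)^{j-k}\le 2^{\,j-k}$, and integrality holds for all large $n$ because only the bounded powers $(1+t)^{j-k}$, $k\le j$, appear. The full-binary part contributes an extra $g$-cost $\bigl(\tfrac{2}{1+t}\bigr)^{j}\sum_{k>j}2^{-k}g_k$, which vanishes as $n\to\infty$ then $j\to\infty$. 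Your entropy and $g$-term computations are otherwise correct and match the paper's once the construction is repaired this way.
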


\begin{proof}
If $\hat{g}(s_0) = \infty$ for some $s_0 > 0$, then $\hat{g}(s) = \infty$ for all $s \leq s_0$, so that the statement holds trivially. We therefore henceforth assume that $\hat{g}(s) < \infty$ for all $s > 0$. Let $(\epsilon_j)_{j \geq 1}$ be a sequence which tends to $0$ as $j \to \infty$. 
For each $j \geq 1$ and all $n$ large enough, let $\ul b^{(j,n)} = (b^{(j,n)}_k)_{k=0}^n$ be an admissible branching 
pattern on $\bbT^{(n)}$ for $a_0^{(j)} = \epsilon_j 2^n$. By Theorem~\ref{thm:Phi-wet} and since the sum in~\eqref{eq:Gn-upper} is lower bounded by any of its terms, we have
\begin{equation}
\label{e:3.15}
	-J^* \geq \lim_{j \to \infty} \limsup_{n \to \infty}\, A_n( \ul b^{(j,n)}) - C \,,
\end{equation}
where
\begin{equation}
\label{e:3.16}
A_n(\ul b) := a_0^{-1} \sum_{k=1}^n \big( \ln{\binom{a_k}{b_k} - g_k b_k} \big) \,.
\end{equation}
It is therefore enough to find for any $s > 0$ as in the statement of the lemma, sequences $(\epsilon_j)_{j}$
and $(b^{(j,n)})_{j,n}$ such that the limit on the right hand side of~\eqref{e:3.15} is at least the  quantity on the right hand side of~\eqref{e:3.14}.

To this end, for an admissible branching pattern $\ul b = (b_k)_{k=0}^n$, we set
\begin{equation}
\label{e:3.16a}
t_k = \frac{b_k}{a_k} \quad ; \qquad k \in [1,n] \,,
\end{equation}
so that by \eqref{eq:B1-ak-bk-relationship} 
\be{eq:B1-ak-tk}
    a_{k+1} = a_k (1 + t_{k+1})^{-1} \quad , \qquad a_k = a_0 \prod_{\ell=1}^k (1 + t_\ell) ^{-1} \,,
\end{equation}
and then by Stirling's approximation
\begin{equation}
    \ln{\binom{a_k}{b_k}} = \ln{\binom{a_k}{t_k a_k}} 
    = a_k \varphi (t_k) + O (\ln{a_k}) 
    = a_0 \varphi (t_k) \prod_{\ell=1}^k (1 + t_\ell)^{-1}  + O (n) \,,
\end{equation}
where
\be{eq:def-phi}
    \varphi (t) = -t \ln{(t)} - (1 - t) \ln{(1 - t)} \,.
\ee
Plugging this in~\eqref{e:3.16}, gives,
\begin{equation}
\label{e:3.20}
A_n(\ul{b}) \geq \sum_{k=1}^n  \big(\varphi (t_k) - g_k t_k \big) \prod_{\ell=1}^k \left( 1 + t_\ell \right)^{-1} + O(a_0^{-1} n^2) \,.
\end{equation}
Now given a dyadic $t \in (0, 1)$, for all $j \geq 1$ and $n$ large enough let
$\ul{b}^{(j,n)} = (b^{(j,n)}_k)_{k=0}^n$ be the sequence defined via~\eqref{eq:B1-ak-bk-relationship} and~\eqref{e:3.16a} for
\begin{align*}
    t_k = \begin{cases}
        t & 1 \leq k \leq j \,, \\
        1 & j < k \leq n  \,.
    \end{cases} 
\end{align*}
Then for all $n$ large enough, $\ul{b}^{(j,n)}$ is an admissible branching pattern for $a_0 = 
\eps_j 2^n$ with $\eps_j := ((1 + t)/2)^j$.
Clearly $\eps_j \to 0$ as $j \to \infty$. 
At the same time, by~\eqref{e:3.20},
\begin{equation} 
\label{eq:B1-close-to-laplace-with-additional-negligible}
\begin{split}
A_n(\ul b^{(j,n)}) & \geq 
    \sum_{k = 1}^{j} (1 + t)^{-k} \left( \varphi (t) - t g_k \right) - \left( \frac{2}{1 + t} \right)^{j} \sum_{k = j + 1}^{n} 2^{-k} g_k + o(1) \,, \\
    & \geq \varphi (t) \sum_{k = 1}^{j} (1 + t)^{-k} - t \sum_{k = 1}^{j} (1 + t)^{-k} g_k  - \sum_{k = j + 1}^{n} (1+t)^{-k} g_k + o(1) \,, 
\end{split}
\end{equation}
with $o(1) \to 0$ as $n \to \infty$ for all $j$. Finiteness of the Laplace transform of $g$ for all $s > 0$ shows that the last sum tends to $0$ when $n \to \infty$ followed by $j \to \infty$. Taking these limits, we thus get, in light of~\eqref{e:3.15}, whenever $t$ is small enough,
\begin{equation}
	-J^* \geq  \frac{\varphi (t)}{t} - t \hat{g}\big(\ln (1+t)\big) - C 
		\geq  \ln \frac{1}{t} - t \hat{g}\big(\ln (1+t)\big) - C' \,.
\end{equation}
where we have used that $\varphi (t)/t = \ln{\frac{1}{t}} + O (1)$ near $0$. 
Finally, substituting $t=e^s - 1 = s + O(s^2)$ as $s \to 0$ and noting that $-\ln (s+O(s^2)) = -\ln s + O(s)$ in this regime, this gives the desired statement.
\end{proof}

We immediately get
\begin{proof}[Proof of Proposition~\ref{p:1.10}]
Since the free energy only decreases if we replace $h_k$ by $h_k \vee (\ln 2)k$, which is also non-decreasing and satisfies~\eqref{eq:Lev1Exist}, we may prove the statement under the additional assumption that $g_k \geq 0$. 
Then Lemma~\ref{l:3.5} is in force, and~\eqref{e:sufficient lim b1} gives $|s\hat{g}(s)| = s\wh{g^+}(s) = O(\ln  (1/s))$, so that $O(s^2) \hat{g}(s) \to 0$ as $s \to 0$. 
It then follows from Lemma~\ref{l:3.5} that $-J^* = \infty$.
\end{proof}

\begin{proof}[Proof of Corollary~\ref{c:1.11}]
As in in the proof of Proposition~\ref{p:1.10}, we shall assume w.l.o.g. that $g$ is non-negative. It follows from~\cite[Corollary 1c]{widder2015laplace}, and that
the Laplace transform of the (possibly negative) sequence $(u_{n} - u_{n-1})_{n \geq 1}$ is $(1-e^{-s})\hat{u}(s)+u_0 \leq 2 s\hat{u}(s)+u_0$, that
\begin{equation}
\label{e:3.23}
\liminf_{s \downarrow 0} 2 s\hat u(s) \geq \liminf_{k \to \infty} u_k - u_0 \,,
\end{equation}
for any sequence $u = (u_n)_{n \geq 0}$ for which $\hat{u}(s) < \infty$ for all $s > 0$.  Now suppose that~\eqref{eq:B1-wetting-necessary-general-main} holds and take $u=(u_k)_{k \geq 0}$ to be sequence defined via $u_k := \ln (k \vee 1) - g_k$. Then by positivity of $g$ and~\eqref{eq:B1-wetting-necessary-general-main} we have $\hat{u}(s) < \infty$ for all $s > 0$. Noting that the Laplace transform of $\ln (k \vee 1)$ is $s^{-1} \ln s^{-1} + O(s^{-1})$ as $s \to 0$, we thus get from~\eqref{e:3.23}
\begin{equation}
\liminf_{s \downarrow 0} \big( \ln \frac1s - s\hat{g}(s)\big) \geq \frac{1}{2} \liminf_{k \to \infty} (\ln k - g_k) - C = \infty\,,
\end{equation}
so that there is no wetting transition by Proposition~\ref{p:1.10}.
\end{proof}

\section{Clustering via second order branching patterns}
\label{s:3}
\subsection{Monotoncity}
\begin{proof}[Proof of Proposition~\ref{p:1.7}]
The proof of~\eqref{eq:MC-a} will be done 
by induction on the cardinality $N+1$ of 
$A\subseteq \bbL_0$.  \eqref{eq:MC-a} is trivial for $N=1$. Assume that 
\eqref{eq:MC-a} holds for cardinality $N+1$ and let $A\prec B$ be two subsets of $\bbL_0$ of 
cardinality $N+2$. 

We shall label leaves in $A$ as $1, \dots , N+2$ and leaves in $B$ as $\sigma_1 , \dots , \sigma_{N+2}$, 
where $\sigma$ is the correspondence which realizes $A\prec B$, that is $|i \wedge j|\leq 
|\sigma_i \wedge \sigma_j|$ for any $i\neq j$. Define
\be{eq:m-and-r} 
m_i = \min_{j\neq i} |i\wedge j| , \ i^*= \min\lbr j~: ~ |i \wedge j|= m_i\rbr\  \text{ and } \ 
r_i =  \min_{j\neq i, i^*} |i \wedge j| \,.
\ee
Without loss of generality we may assume that $|1 \wedge 2| = m_1 = \min_i m_i$. Note that this necessarily 
implies that $r_1 > m_1$ (otherwise $m_2 < m_1$). 
Consider now $\hat A = A\setminus \lbr 1 \rbr$ and $\hat B = B\setminus \{ \sigma_1 \}$. 
Clearly, $\hat A\prec\hat B$, and hence by the induction hypothesis $\Phi_n (\hat A )\leq \Phi_n (\hat B )$. 
By construction, 
\be{eq:Ahat-A}
 \Phi_n (A ) = \Phi_n (\hat A ) + h_{r_1 , m_1}. 
\ee
In order to compare \eqref{eq:Ahat-A} with the corresponding contribution to $\Phi_n (B)$ we
shall consider two cases.

\noindent
\case{1} 
$r_{\sigma_1} > m_{\sigma_1}$. 
Then there exists unique $j$ such that $m_{\sigma_1} = 
|\sigma_1 \wedge \sigma_j|$. 
It follows that $m_{\sigma_1} \geq m_1$ and $r_{\sigma_1}\geq r_1$. 
We are adding a branching point in generation $m_{\sigma_1}$ with branching ancestor 
in generation $r_{\sigma_1}$. 
Hence, 
\be{eq:Bhat-B1}
\Phi_n(B ) = \Phi_n (\hat B) + h_{r_{\sigma_1} , m_{\sigma_1}} \geq \Phi_n (\hat A) + h_{r_1 , m_1} = \Phi_n  (A). 
\ee

\noindent
\case{2} $r_{\sigma_1} = m_{\sigma_1}$. In this case we add a branching point in generation 
$r_{\sigma_1} \geq r_1$. So its branching ancestor belongs to generation $r$ with $r >r_1$. 
Since $r_{\sigma_1} = m_{\sigma_1}$ it also has a branching descendant in generation $m$ with 
$m\geq m_1$.  Which means that adding $\sigma_1$ to $\hat B$ removes a branching point of type 
$(r , m)$ and instead adds two branching points - one of type $(r_{\sigma_1}, m)$ and one of type 
$(r , r_{\sigma_1})$. 
Therefore,
\begin{align} \label{eq:Bhat-B2}
    \Phi_n (B) &= \Phi_n  (\hat B) + \lb h_{r, r_{\sigma_1}} - h_{r, m}\rb + h_{r_{\sigma_1} , m} 
    \,\geq\, \Phi_n  (\hat B) + h_{r_{\sigma_1} , m} \\
    &\geq \Phi_n  (\hat A) + h_{r_1 , m_1} 
    \,=\, \Phi_n (A)    
\end{align}
\qed

Let us turn to verifying \eqref{eq:MC-b}. 
Let $A, B$ be two disjoint subsets of $\bbL_0$.

\noindent
Consider first $\calB (A)\cap \calB (B) = \emptyset$. 
Then there exists unique $z$ such that
\[
\lbr {z}\rbr = \argmin\lbr\abs{u\wedge v }~:~u\in A ,v\in B\rbr .
\]
Therefore, $\calB (A\cup B ) = \calB (A )\cup \calB (B ) \cup \lbr z\rbr$. 
For each $u\in\calB (A\cup B )$, let $\fra_{A\cup B} (u )$ be its branching ancestor.
\begin{rem} 
\label{rem:fra-u}
Recall that by our convention there is always a branching  
ancestor in some  generation $\ell \leq n+1$. In the sequel we shall,  with a certain abuse of 
notation, use $\abs{\fra_A (u )}$ for the generation of  $\fra_A (u )$. In this way it might happen that 
$\abs{\fra_A (u )} = n+1$. 
\end{rem}
Now, we can rewrite \eqref{eq:BP-ex2} as follows
\be{eq:Phi-AB} 
\Phi_n (A\cup B ) = \sum_{u \in (A \cup B) \setminus \lbr z \rbr } h_{\abs{\fra_{A\cup B} (u )}, \abs{u}} + 
h_{\abs{\fra_{A\cup B} (z )}, \abs{z}} + h .
\ee 
For any $u\in \calB (A)$, $\abs{\fra_{A} (u)} \geq \abs{\fra_{A\cup B} (u )}$. 
The same regarding $u\in \calB (B)$. 
Hence, the first term on the right hand side of \eqref{eq:Phi-AB} is less or equal to 
\[ 
 \sum_{u\in \calB (A)} h_{ \abs{\fra_{A} (u )},  \abs{u}} +  
 \sum_{u\in \calB (B)} h_{\abs{\fra_{B} (u )},  \abs{u}}. 
\]
Since, by construction, $h_{ \abs{\fra_{A\cup B} (z )}, \abs{z}} \leq h$, the assertion  
$\Phi (A\cup B ) \leq \Phi (A ) +\Phi (B )$ follows. 
\smallskip

If $\abs{\calB (A )\cap \calB (B)} = N >0$, then the following happens: 
\begin{itemize}
    \item As before for any $u\in \calB (A )$, it holds that $h_{\abs{\fra_A (u )} ,\abs{u}} \geq h_{\abs{\fra_{A\cup B} (u )} ,\abs{u}}$. The same regarding $u\in \calB (B)$.
    \item However, each one of $N$ branching points $w\in \calB_{AB}:=\calB (A )\cap \calB (B )$ is counted twice in $\Phi_n (A ) + \Phi_n (B )$, whereas it is counted once in $\Phi_n (A\cup B)$.
    \item On the other hand there are exactly $N+1$ new branching points $z\in \calB_{A\Delta B} := \calB (A\cup B) \setminus \lb \calB (A )\cup \calB (B)\rb$. 
\end{itemize}
Therefore we just need to find a matching $\tau$ between any $N$ out of the $(N+1)$ points of $\calB_{A\Delta B}$ and $N$ points of $\calB_{AB}$ such that, 
\be{eq:tau-AB} h_{\abs{\fra_A (\tau (z) )}\wedge \abs{\fra_B \tau (z))} , \abs{\tau (z)}} \geq h_{\abs{\fra_{A\cup B} (z)} , \abs{z}}
\ee
for any one of these $N$ chosen points $z\in \calB_{A\Delta B}$. Indeed, given such a $\tau$ 
the remaining $(N+1)^{\text{\ul{st}}}$ point $w$ of $\calB_{A\Delta B}$ will always satisfy $h_{\abs{\fra_{A\cup B} (w)}, \abs{w}} \leq h$, and hence 
\eqref{eq:tau-AB} indeed implies \eqref{eq:MC-b} in its full generality. The mapping $\tau$ defined in the previous subsection applies here as well. For all $z \in \calB_{A \Delta C}, z \neq w$
\begin{align*}
    |\fra_{A \cup B} (z)| \leq |\tau (z)| < |\fra_{A \cup B} ( \tau (z) )| \leq | \fra_{A} ( \tau (z) ) | \wedge | \fra_{B} ( \tau (z) ) |
\end{align*}
This completes the proof.
\end{proof}

\subsection{Existence of free energy} 
\begin{proof}[Proof of proposition~\ref{prop:B2-free-energy-exist}]
    Let $n \in \bbN$ and $A \subseteq \bbL_{0, r/l}^{(n)}$.
    Denote by $\ell^*$ the maximal height of a branching point in the tree induced by $A$.
    Then for $A \neq \emptyset$, by monotonicity of $\{h_{k,\ell}\}$,
    \begin{align*}
        \Phi_n (A) - \Phi_{n-1} (A) & = h_{n+1, n} + h_{n+1, \ell^*} - h_{n, n-1} - h_{n, \ell^*} \\
        & \leq 2h_{n+1, n} \,.
    \end{align*}
    Set $\gamma_n = 2 h_{n+1, n}$ then $\sum_{n} \gamma_n 2^{-n} < \infty$ and assumption~\eqref{eq:Phi-A} holds.
    By Proposition \ref{prop:Phi-FE-A} we get the desired conclusion.
\end{proof}

\subsection{Sufficient condition for wetting}
As in the case of first order branching, for the sake of simplifying the proofs to follow, we shall assume henceforth that $h = 0$ instead of $h_{n+1, n}$ in the definition of $\Phi_n$ as a second order branching clustering function. This will again not change the canonical free energy as $2^{-n} h_{n+1,n} \to 0$ as $n \to \infty$ thanks to the assumption~\eqref{eq:B2-free-energy-exist} which will be assumed throughout. 

Also, as before, if $\ul b$ is the second order branching pattern for the subtree $\cT(A)$ induced by $A \subseteq \bbL_0^{(n)}$ in $\bbT^{(n)}$, we write $A \sim \ul b$. 
For $\ell \in [0,n]$ we also let $a_\ell$ be the total population size at generation $\ell$ or higher, so that 
\begin{equation}
\label{e:4.10}
a_\ell : = 1 + \sum_{j = \ell}^n b_j \;, \quad \ell = 1, \dots, n
\; , \quad
a_0 = b_0 = |A| \,,
\end{equation}
where we recall that $(b_k)_{k=0}^n$ is the first order branching pattern, which is related to the second order branching pattern via,
\begin{equation}
\label{e:4.11}
2b_k = \sum_{\ell=0}^{k-1} b_{k,\ell} 
\;
,\quad
b_\ell = \sum_{k = \ell+1}^{n+1} b_{k,\ell}  \,,
\end{equation}
for all $\ell,k \in [0,n]$.

Given $n \geq 1$,  $a_0 \in [0, 2^n]$, we shall say that a triangular array $\ul b = (b_{k,\ell})_{0\leq \ell < k \leq n+1}$ is an {\em admissible second order branching pattern for} $a_0$ and write $\ul b \sim a_0$ if there exists a subset $A \subseteq \bbL^{(n)}_0$ with $|A| = a_0$, such that $A \sim \ul b$. 
It is an {\em admissible second order branching pattern} if it is admissible for some $a_0$. 
The number of subsets of leaves $A \subseteq \bbL_0^{(n)}$ with a given (admissible) second order branching pattern $\ul b$ is given by
\begin{equation}
\ent{\ul b} = \# \{ A \subseteq \bbL^{(n)}_0 \,:\, A \sim \ul b \}\,.
\end{equation}
As in the first order branching case, we can write the canonical partition function as
\be{eq:L1-Gpf-rep} 
\bbW_n (a_0 ) := 
\sum_{\ul b \sim a_0} \ent{\ul b} {\rm e}^{-\Phi_n (\ul b)}\,,
\ee
where 
\begin{equation}
\Phi_n(\ul b) = \sum_{k=1}^{n+2} \sum_{\ell=0}^{k-1} h_{k, \ell} b_{k, \ell} \,,
\end{equation}
with the convention that $b_{n+2,\ell} = \1_{\lbr \ell = n+1 \rbr}$.

\begin{lem}\label{lem_entropy_bs}
For $n \geq 1$, if $\ul b = (b_{k, \ell} )_{0 \leq \ell < k \leq n+1}$ is an admissible second order branching pattern, then
\begin{equation}\label{two_layer_entropy}
\ent{\ul b} = \prod_{j= 1}^{n} \binom{2 b_j}{b_{j,j-1}, \ldots, b_{j,0}  } 2^{ a_j - b_j},
\end{equation}
above $\binom{2 b_j}{b_{j,j-1}, \ldots, b_{j,0}  }:= b_j! \Big/ b_{j,j-1} ! \dots b_{j,1} ! b_{j,0}!$ is the multinomial coefficient.
\end{lem}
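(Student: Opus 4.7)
The plan is to prove \eqref{two_layer_entropy} by an explicit top-down construction of $A$ that gives a bijective interpretation to each factor. At every level $j \in [1, n]$, I will call each of the two children at level $j-1$ of each branching point of $\cT(A)$ at level $j$ a \emph{slot}; thus there are $2 b_j$ distinguishable slots at level $j$, and each slot descends through a (possibly empty) chain of non-branching vertices of $\cT(A)$ before terminating at its direct branching descendant at some level $\ell < j$. The multinomial $\binom{2 b_j}{b_{j, j-1}, \ldots, b_{j, 0}}$ will count the ways to assign a destination level to each of these $2 b_j$ slots with the prescribed multiplicities $(b_{j, \ell})_\ell$, while $2^{a_j - b_j}$ will count the left/right choice of continuation at each of the $a_j - b_j$ non-branching vertices of $\cT(A)$ at level $j$.

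I then set up a bijection between subsets $A \sim \ul b$ and pairs $(\pi, \chi)$, where $\pi$ records the destination-level assignment of every slot at every level and $\chi$ records the left/right choice at every non-branching vertex. From $A$ the pair $(\pi, \chi)$ is read off directly from the definitions of branching points and direct branching descendants. For the inverse map, I start at the root of $\bbT^{(n)}$ and walk down to the oldest branching point at the unique level $\ell^*$ with $b_{n+1, \ell^*} = 1$, using the $n - \ell^*$ binary choices from $\chi$ on the stem. Inductively, once the branching points at level $j \geq 1$ are placed, $\pi$ splits the $2 b_j$ slots by destination level, and $\chi$ then guides each slot through its intermediate non-branching vertices to a uniquely determined vertex at the prescribed level, placing the branching points of $\cT(A)$ at the next lower level. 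Because distinct branching points at the same level have disjoint subtrees in $\bbT^{(n)}$, the destinations of distinct slots are automatically distinct, and iterating down to $j = 0$ produces a unique $A$.

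The main verification, and the one subtle step in the argument, will be a bookkeeping identity ensuring that the construction consumes precisely $\sum_{j=1}^n (a_j - b_j)$ binary choices overall, so that the product $\prod_{j=1}^n 2^{a_j - b_j}$ is expended once and only once. With the convention $b_{n+1, \ell^*} = 1$ absorbing the stem above the oldest branching point, every non-branching vertex of $\cT(A)$ at level $j$ lies on exactly one skeleton edge from a branching ancestor at level $k > j$ to its direct branching descendant at level $\ell < j$, so switching the order of summation yields $\sum_{j=1}^n (a_j - b_j) = \sum_{k > \ell} (k - 1 - \ell) b_{k, \ell}$, which is precisely the total number of intermediate walking steps summed over all slots. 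Multiplying the (mutually independent) factors contributed at each level $j$ then gives \eqref{two_layer_entropy}.
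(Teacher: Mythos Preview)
Your proposal is correct and follows essentially the same approach as the paper: interpret the multinomial $\binom{2b_j}{b_{j,j-1},\ldots,b_{j,0}}$ as assigning destination levels to the $2b_j$ children of the branching points at level $j$, and the factor $2^{a_j-b_j}$ as the left/right choices at the non-branching vertices of $\cT(A)$ at level $j$. The paper's proof is a short sketch that attaches the binary choices directly to each level, whereas you organize them along skeleton edges and then close the count via the identity $\sum_{j=1}^n (a_j-b_j)=\sum_{0\le \ell<k\le n+1}(k-1-\ell)\,b_{k,\ell}$; this extra bookkeeping is a harmless regrouping, and your explicit bijection $(A)\leftrightarrow(\pi,\chi)$ makes the argument more rigorous than the paper's version.
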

\begin{proof}
Notice that two sets $A$ and $A'$ that are compatible with $\ul b$ may be different in a graph sense, nevertheless they will always have the same population size in every generation.  
Each non branching point has exactly one offspring that can be embedded in $\bbT_n$ in two distinct ways (``right" or ``left"). 
Therefore, after fixing the branching points in generation $j$ there are $2^{ a_j - b_j}$ different ways to obtain the next vertex in generation $j-1$, giving the $2^{ a_j - b_j}$ term in \eqref{two_layer_entropy}. 
Finally the first term in the product is the number of possible ways one may arrange the branching points $b_{j+i,j}$ in generation $j+i$ whose last branching ancestor is in generation $j$. 
\end{proof}

The next lemma shows that the free energy does not change if one replaces 
the sum in~\eqref{eq:L1-Gpf-rep} by the maximum.
\begin{lem}\label{pqrt_fct_bp_max} 
For all dyadic $\epsilon > 0$, 
\begin{equation}\label{eq_pqrt_fct_bp_max}
\omega (\eps) = \lim_{n \to \infty} \frac{1}{2^n} \ln 
\max_{\ul b \sim \epsilon 2^n} \big( \ent{\ul b} {\rm e}^{-\Phi_n (\ul b)}\big) \,.
\end{equation}
\end{lem}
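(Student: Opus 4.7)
The plan is to sandwich the partition function $\bbW_n(\epsilon 2^n)$ between the maximal summand and the number of summands times that maximal summand, and then to observe that the number of admissible second order branching patterns for $a_0 = \epsilon 2^n$ in $\bbT^{(n)}$ is sub-exponential in $2^n$. Since Proposition~\ref{prop:Phi-FE-A} guarantees that $\omega(\epsilon)$ exists (one checks the hypotheses as in Proposition~\ref{prop:B2-free-energy-exist}), it suffices to control both $\frac{1}{2^n}\ln \bbW_n(\epsilon 2^n)$ and $\frac{1}{2^n}\ln\max_{\ul b\sim \epsilon 2^n}(\ent{\ul b}\rme^{-\Phi_n(\ul b)})$ relative to each other.

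The lower bound is immediate: since all summands in~\eqref{eq:L1-Gpf-rep} are non-negative,
\begin{equation}
\bbW_n(\epsilon 2^n) \;\geq\; \max_{\ul b \sim \epsilon 2^n} \big(\ent{\ul b}\rme^{-\Phi_n(\ul b)}\big),
\end{equation}
and this gives the ``$\geq$'' direction after taking $\frac{1}{2^n}\ln$ and passing to the limit.

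For the matching upper bound, I would bound the number of distinct admissible second order branching patterns $\ul b = (b_{k,\ell})_{0\leq\ell<k\leq n+1}$ with $|A| = \epsilon 2^n$. Each entry $b_{k,\ell}$ is a non-negative integer bounded by $|A| = \epsilon 2^n$, and there are $\binom{n+2}{2} = O(n^2)$ such entries. Thus
\begin{equation}
\#\{\ul b \sim \epsilon 2^n\} \;\leq\; (\epsilon 2^n + 1)^{O(n^2)} \;=\; \exp\big(O(n^3)\big),
\end{equation}
which is sub-exponential in $2^n$, so $\frac{1}{2^n}\ln \#\{\ul b \sim \epsilon 2^n\} \to 0$ as $n\to\infty$. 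Combined with the trivial bound
\begin{equation}
\bbW_n(\epsilon 2^n) \;\leq\; \#\{\ul b \sim \epsilon 2^n\} \cdot \max_{\ul b \sim \epsilon 2^n}\big(\ent{\ul b}\rme^{-\Phi_n(\ul b)}\big),
\end{equation}
this yields the ``$\leq$'' direction. The existence of the limit on the right hand side of~\eqref{eq_pqrt_fct_bp_max} follows from the two-sided sandwich together with the existence of $\omega(\epsilon)$.

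I do not expect serious obstacles here; the only thing to be careful about is the crude counting of admissible triangular arrays, and the fact that existence of the limit on the right of~\eqref{eq_pqrt_fct_bp_max} is obtained simultaneously rather than assumed. An alternative, slightly more elegant presentation would be to bound the number of patterns by the number of possible first-order patterns times the number of refinements: there are at most $\prod_{k=0}^n (a_k+1) \leq (\epsilon 2^n +1)^n$ first-order patterns, and each one admits at most $\prod_k (2b_k+1)^k$ second-order refinements, still giving an $\exp(O(n^3))$ bound, which is all we need.
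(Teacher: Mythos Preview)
Your proposal is correct and follows essentially the same approach as the paper: sandwich $\bbW_n(\epsilon 2^n)$ between the maximal summand and the number of summands times that maximum, then observe that the number of admissible second order branching patterns is at most $\exp(O(n^3))$, which is sub-exponential in $2^n$. The paper obtains the counting bound by using $\sum_{0\leq\ell<k\leq n+1} b_{k,\ell}=2a_0-1$ to get $(2\epsilon 2^n-1)^{n^2}$, whereas you bound each of the $O(n^2)$ entries individually by $\epsilon 2^n$; both give the same $\exp(O(n^3))$ estimate and the arguments are otherwise identical.
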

\begin{proof}
Clearly,
\begin{equation}
\label{e:4.18}
\max_{\ul b \sim \epsilon 2^n} \big( \ent{\ul b} {\rm e}^{-\Phi_n (\ul b)}\big)
\leq\, \bbW_0(\epsilon 2^n)\, \leq \#\{\ul b :\: \ul b \sim \epsilon2^n \}
\max_{\ul b \sim \epsilon 2^n} \big(\ent{\ul b} {\rm e}^{-\Phi_n (\ul b)}\big)
\end{equation}
Since $\sum_{0 \leq \ell < k \leq n+1} b_{k,\ell} = 2a_0 - 1 = 2\epsilon 2^n-1$,  the total number of admissible arrays $\ul b \sim \epsilon 2^n$ is smaller than $(2\epsilon 2^n-1)^{n^2} \leq \rme^{C n^3}$. Therefore, the limit as $n \to \infty$ of $2^{-n}$ times the logarithm of both bounds in~\eqref{e:4.18} tend to the right hand side of~\eqref{eq_pqrt_fct_bp_max}.
\end{proof}

For all $k \in \bbN$ and $0 \le \ell \le k - 1$, set
    \begin{align} \label{eq:b2-p-def}
        p_{k, \ell} = h_{k, \ell} - (\ln 2)\ell \,.
    \end{align}

\begin{proof}[Proof of Proposition~\ref{p:1.13}]
Let $\epsilon > 0$, $n \geq 1$ and take $\ul b \sim \epsilon 2^n$ to be the maximizer in~\eqref{eq_pqrt_fct_bp_max}. As in~\eqref{e:3.10} we can wrtie
\begin{equation}
\begin{split}
    \sum\limits_{j=1}^{n}(a_j - b_j) &= (n+2) - 2 a_0 + \sum\limits_{\ell=0}^{n} \ell b_\ell \\
    & = (n+2) - 2 a_0 + \sum\limits_{\ell=0}^{n} \sum\limits_{k=\ell+1}^{n+1} \ell b_{k,\ell} 
    = (n+2) - 2a_0 + \sum\limits_{k=1}^{n+1} \sum\limits_{\ell=0}^{k-1} \ell b_{k,\ell}
\end{split}
\end{equation}
Thanks to Lemma~\ref{lem_entropy_bs} and Lemma~\ref{pqrt_fct_bp_max} we thus get
\begin{equation}
\begin{split}
\label{e:4.19}
    \omega(\epsilon) & = - \epsilon (2 \log 2) + \lim_{n \to \infty} 2^{-n} \log \left( \left[ \prod\limits_{k=1}^{n} \binom{2 b_{k}}{b_{k,k-1}, \ldots, b_{k,0}  }  \right] \times \prod\limits_{k=1}^{n+2} \prod\limits_{\ell=0}^{k-1} e^{- p_{k, \ell} b_{k, \ell}} \right)\\
  & = - \epsilon (2 \log 2) + \lim_{n \to \infty}
       2^{-n} \sum_{k=1}^{n} \log \left[ \binom{2 b_{k}}{b_{k,k-1}, \ldots, b_{k,0}  } \prod_{\ell=0}^{k-1} e^{-p_{k, \ell} b_{k, \ell}} \right] \,.
\end{split}
\end{equation}
where the last equality follows since 
\begin{equation}
\begin{split}	
\sum_{k=n+1}^{n+2} & \sum_{\ell=0}^{k-1} p_{k,\ell}b_{k,\ell} \le \max_{k=n+1,n+1} \max_{\ell=0,\dots,k-1} \left| p_{k,\ell} \right| \sum_{k=n+1}^{n+2} \sum_{\ell=0}^{k-1} b_{k,\ell}  \\
    & = \frac{1}{2} \left( b_{n+1} + b_{n+2} \right) \max_{k=n+1,n+2} \max_{\ell=0,\dots,k-1} \left| p_{k,\ell} \right| 
    \leq \left( h_{n + 2, n + 1} + \lb n + 1 \rb \ln{2} \right)\,.
\end{split}
\end{equation}
By Stirling's approximation
\begin{equation}
\log \binom{2 b_k}{b_{k,k-1} \ \ldots \ b_{k,0}  } = 2 b_k \log 2 b_k - \sum_{l = 0}^{k-1} \Big( b_{k,l} \log b_{k,l} 
\Big) + O(n^2) \,,
\end{equation}
so the last sum in~\eqref{e:4.19} is equal to $O(n^3)$ plus
\begin{equation}
\label{e:4.23}
\begin{split}
-\sum_{k=1}^{n} \sum_{\ell = 0}^{k-1} b_{k,\ell} \log \Big( \frac{b_{k,\ell}}{ 2 b_{k} e^{-p_{k, \ell}} } \Big) 
& =- \sum_{k=1}^{n} 2 b_{k} \left[
 \sum_{\ell = 0}^{k-1} e^{-p_{k, \ell}} \Big( \frac{b_{k,\ell} }{ 2b_{k} e^{-p_{k, \ell}} } \Big) \log \Big( \frac{b_{k,\ell} }{ 2b_{k} e^{-p_{k, \ell}} } \Big) \right] \\
& \leq e^{-1} \sum_{k=1}^{n} 2 b_{k} 
 \sum_{\ell = 0}^{k-1} e^{-p_{k, \ell}} \leq 2e^{-1} \kappa_2 \epsilon 2^n \,.
\end{split}
\end{equation}
Above, the first inequality follows since $y \log y \geq -e^{-1}$ for all $y > 0$, and $\kappa_2$ is as in~\eqref{e:1.36}.
Plugging this in~\eqref{e:4.19}, taking the limit as $\epsilon \to 0$ and  using Theorem~\ref{thm:Phi-wet} we complete the proof.
\end{proof}

\subsection{Necessary condition for wetting} 
Recall the definition of $g_{\ell,d}$ from~\eqref{eq:b2-g-def}.
\begin{lem} \label{lem: necessary b2 lemma}
Let $(h_{k,j})_{0 \leq j < k}$ be non-decreasing triangular array and suppose that \linebreak \eqref{eq:B2-free-energy-exist} holds. Suppose also that $g_{\ell,d} \geq 0$ for all $\ell,d \geq 0$. 
Then there exists arbitrarily small $s$ such that $e^s - 1$ is dyadic and
\begin{equation}
-J^* \geq \ln{\frac{1}{s}} - \big(2s^2 + O(s^3)\big) \hat{g}(s,\, 2s) - C \,.
\end{equation}
\end{lem}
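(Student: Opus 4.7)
The plan is to follow the template of the proof of Lemma~\ref{l:3.5}, adapted to the second-order setting: we exhibit a one-parameter family of admissible second-order branching patterns, plug each into the lower-bound formula for $\omega(\epsilon)/\epsilon$ from Lemma~\ref{pqrt_fct_bp_max}, and match the resulting energy contribution to $\hat{g}(s,2s)$. First, reduce to the case where $\hat{g}^+(s, 2s) < \infty$ for all $s > 0$ (else the bound is trivial), and adopt the rewriting from the proof of Proposition~\ref{p:1.13}, namely
\begin{equation*}
    \omega(\epsilon)/\epsilon \;\geq\; -2\ln 2 \;+\; a_0^{-1} \sum_{k=1}^{n}\Big[\ln\binom{2b_k}{b_{k,0},\ldots,b_{k,k-1}} - \sum_{\ell=0}^{k-1} p_{k,\ell}\, b_{k,\ell}\Big] \;+\; o(1),
\end{equation*}
where $p_{k,\ell} = g_{\ell, k-\ell}$, valid for any admissible pattern $\ul{b}\sim a_0 = \epsilon 2^n$.

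For the pattern, given a small parameter $\sigma > 0$ (chosen from a dense countable subset to ensure admissibility after rounding) and a truncation $j\in\bbN$, I would take the first-order profile $t_k := 1-e^{-\sigma}$ for $1\le k\le j$ and $t_k := 1$ for $j<k\le n$ (so $a_k = a_0\beta^{-k}$ with $\beta := 1+t = 2 - e^{-\sigma}$ in the lower region, and full-binary above), together with the second-order specification $b_{k, k-d} := 2 b_k\, r_d$ for $r_d := (e^\sigma - 1)e^{-\sigma d}$ ($d\ge 1$, $k \le j$), and $b_{k, k-1} := 2b_k$ for $k>j$. The non-trivial admissibility check is the marginal consistency $b_\ell = \sum_{d\ge 1} b_{\ell+d,\ell}$, which reduces to $\sum_{d\ge 1}\beta^{-d}r_d = 1/2$; a direct geometric-sum computation shows this holds if and only if $\beta = 2-e^{-\sigma}$, precisely the choice above. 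Integrality is then handled by rounding at lower-order cost. The resulting density is $\epsilon = (\beta/2)^j\to 0$ as $j\to\infty$.

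Stirling yields $\ln\binom{2b_k}{b_{k,0},\ldots,b_{k,k-1}} = 2b_k H(r) + O(\ln b_k)$ for $k\le j$, where by construction $b_{k,k-d}/(2b_k) = r_d$ does not depend on $k$, and a direct computation gives $H(r) = -\ln(e^\sigma-1) + \sigma e^\sigma/(e^\sigma-1) = \ln(1/\sigma) + 1 + O(\sigma)$ for small $\sigma$. The full-binary region ($k>j$) contributes no entropy and, thanks to assumption~\eqref{eq:B2-free-energy-exist}, only a negligible energy correction as $j\to\infty$. For the energy in the lower region, using $p_{k,k-d} = g_{k-d,d}$, the explicit form $b_{\ell+d,\ell}/a_0 = 2t(e^\sigma-1)\beta^{-\ell}(\beta e^\sigma)^{-d}$, and the identity $2t(e^\sigma-1) = 2(e^\sigma + e^{-\sigma} - 2) = 2\sigma^2+O(\sigma^4)$, the coefficient of $g_{\ell,d}$ in the bound is $2\sigma^2\, e^{-s_1\ell - s_2 d}(1+O(\sigma^2))$, with $s_1 := \ln\beta$ and $s_2 := \ln(\beta e^\sigma) = s_1 + \sigma$.

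Letting $j\to\infty$ gives $-J^* \geq 2H(r) - 2\ln 2 - 2\sigma^2\, \hat{g}(s_1,s_2) - C$. Setting $s:= s_1$, we have $s_2 = s+\sigma \geq 2s$ (since $\sigma \geq s_1 = s$ for small $\sigma$), so by monotonicity of $\hat{g}^+$ in each argument $\hat{g}(s_1,s_2)\leq \hat{g}(s,2s)$. Inverting $s = \sigma-\sigma^2+O(\sigma^3)$ gives $\sigma = s+O(s^2)$, hence $2\sigma^2 = 2s^2+O(s^3)$ and $2 H(r) \geq \ln(1/s)+O(1)$, yielding the claim. The principal subtlety is the rigidity of the second-order pattern: since $b_{k,\ell}$ is strongly constrained by the first-order marginals $b_\ell$, the consistency relation forces the specific coupling $\beta = 2 - e^{-\sigma}$, which prevents $s_2 = 2 s_1$ exactly; the resulting $O(\sigma^2)$ discrepancy must be absorbed via the monotonicity argument at the end.
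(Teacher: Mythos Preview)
Your proposal is correct and is essentially the paper's proof in different clothing: with $t=1-e^{-\sigma}$ your second-order law $r_d=(e^\sigma-1)e^{-\sigma d}$ is exactly the paper's geometric $t(1-t)^{d-1}$, your forced relation $\beta=2-e^{-\sigma}$ is just $\beta=1+t$, and your closing monotonicity step $\hat g(s_1,s_2)\le\hat g(s,2s)$ (from $s_2\ge 2s_1$) is the paper's inequality $(1-t)/(1+t)\le(1+t)^{-2}$ written logarithmically. One small correction: the vanishing of the full-binary region's energy as $j\to\infty$ is ensured by your opening reduction to $\hat g<\infty$, not by~\eqref{eq:B2-free-energy-exist} alone.
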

\begin{proof}
We proceed as in the case of first order branching. If $\wh{g}(s,2s) = \infty$ for some $s > 0$, then the statement holds trivially, so we shall assume otherwise. 
Let $(\epsilon_j)_{j \geq 1}$ be a sequence which tends to $0$ as $j \to \infty$. 
For each $j \geq 1$ and all $n$ large enough, let $\ul b^{(j,n)} = (b^{(j,n)}_{k, \ell})_{0 \leq \ell < k \leq n}^n$ be an admissible second order branching pattern on $\bbT^{(n)}$ for $a_0^{(j,n)} = \epsilon_j 2^n$. 
It follows from Theorem~\ref{thm:Phi-wet} and~\eqref{e:4.19} that
\begin{equation}
\label{e:4.24}
    -J^* \geq \lim_{j \to \infty} \limsup_{n \to \infty}\, B_n(\ul b^{(j,n)}) - C \,,
\end{equation}
where
\begin{equation}
\label{e:4.25}
B_n(b) = a_0^{-1} \sum_{k=1}^{n} \ln \left[ \binom{2 b_{k}}{b_{k,k-1}, \ldots, b_{k,0}  } \prod_{d=1}^{k} e^{- g_{k - d, d} b_{k, k - d}} \right] 
\end{equation}
Now, for $t > 0$ dyadic, $j \in \bbN$ and all $n \geq j$ set,
\begin{equation}
\label{eq:B2-Bernoulli-branching-def}
        t_k :=  \begin{cases}
            1 & j < k < n  \,, \\
            t & 0 \leq k \leq j \,,
        \end{cases}
\end{equation}
let $(a_k)_{k=0}^n$, $(b_k)_{k=0}^n$ be defined via~\eqref{e:4.10}, 
\begin{equation}
	\frac{b_k}{a_k} = t_k \ , \ \ 1 \leq k \leq n \,,
\end{equation}
and set
\begin{equation}
\label{eq:B2-branching-pattern}
        b_{k, k-d} = \begin{cases}
            2 b_k \cdot t_{k - 1} \cdot (1 - t_{k - 1})^{d - 1} & 1 \leq d < k \leq n \,,\\
            2 b_k \cdot (1 - t_{k - 1})^{k-1} & d = k \in [1, n] \,.
        \end{cases} 
\end{equation}
We claim that for all such $t$ and $j$ whenever $n$ is large enough the triangular array
$b = (b_{k, \ell})_{0 \leq \ell < k \leq n + 1}$ is an admissible second order branching pattern for $a_0 = 
\eps_{j} 2^n$ with $\eps_j = ((1 + t)/2)^j$.
To see this, we need to verify that the $b_{k,\ell}$-s are all integer and that~\eqref{e:4.11} holds. 
Indeed, 
\begin{equation}
b_k = 
\begin{cases}
	2^{n-k}	& j+1 \leq k \leq n\,, \\
	 2^{n-j}(1+t)^{j-k}t  & 1 \leq k \leq j\,,
\end{cases}
\end{equation}
and
\begin{equation}
\label{e:4.30}
        b_{k, k-d} = \begin{cases}
            2b_k1_{\{d=1\}} & k \geq j+1 \,,\\
            2 b_k t (1 - t)^{d - 1} & 1 \leq d < k \leq j \,,\\
            2 b_k (1 - t)^{k-1} & d = k \leq j \,.
        \end{cases} 
\end{equation}
The right hand side above can be made integer for all dyadic $t > 0$ and $j \in \bbN$ by taking $n$ large enough.
As for~\eqref{e:4.11}, the first equality in~\eqref{e:4.11} follows easily by summing over $d=1, \dots, k$ in~\eqref{e:4.30}. 
The second trivially holds for $\ell \geq j+1$. 
For $1 \leq \ell \leq j$, we have
 \begin{equation}
 \begin{split}	
        \sum_{k = \ell + 1}^{n + 1} b_{k, \ell} 
        & = \sum_{d = 1}^{j  - \ell} 2 b_{\ell + d} t (1 - t)^{d - 1} 
        = \sum_{d = 1}^{j - \ell} 2 \cdot 2^{n - j} t (1 + t)^{j - \ell - d} \cdot t (1 - t)^{d - 1} \\
        & = 2 t^2 \cdot 2^{n - j} (1 + t)^{j - \ell - 1} \cdot \frac{1 - \left( \frac{1 - t}{1 + t} \right)^{j - \ell}}{1 - \frac{1 - t}{1 + t}} 
        = 2^{n - j} \cdot t (1 + t)^{j - \ell} 
        = b_\ell\,,
\end{split}
\end{equation}
Turning to $B_n(\ul b)$, for the sum in~\eqref{e:4.25} when $k \in [j+1, n]$ we have,
\begin{equation}
a_0^{-1} \sum_{k = j + 1}^{n} 2 b_k g_{k - 1, 1} = \left( \frac{2}{1 + t} \right)^{j} 2^{-n} \sum_{k = j + 1}^n 2 \cdot 2^{n - k} g_{k - 1, 1} 
        \leq \sum_{\ell = j}^{n - 1} (1+t)^{-\ell} g_{\ell, 1} \,,
\end{equation}
since $\hat{g}(s_1,s_2)$ is finite for all $(s_1, s_2) \in (0,\infty)^2$, the last sum tends $0$ as $n \to \infty$ followed by $j \to \infty$.

At the same time, as in~\eqref{e:4.23}, the sum in~\eqref{e:4.25} taken over $k \leq j$, is equal to $O(n^3)$ plus
\begin{equation}
\label{e:4.33}
\begin{split}
- 2 \sum_{k=1}^j b_k \sum_{d=1}^{k} & \frac{b_{k, k - d}}{2 b_k} \ln{\frac{b_{k, k - d}}{2 b_k e^{-g_{k - d, d}}}} \\
& = -2 \sum_{k = 1}^j b_k \sum_{d = 1}^{k} t (1 - t)^{d - 1} \left( (d - 1) \ln{(1 - t)} + \ln{t} + g_{k - d, d} \right) \\
        &\ge -2 \sum_{k = 1}^{j} b_k \sum_{d = 1}^{k} t (1 - t)^{d - 1} \left( \ln{t} + g_{k - d, d} \right) \\
        &= -2 a_0 t^2 \sum_{\ell = 0}^{j - 1} \sum_{d = 1}^{j - \ell} (1 + t)^{- \ell - d} (1 - t)^{d - 1} \left( \ln{t} + g_{\ell, d} \right) \,,
 \end{split}
 \end{equation}
 where for the last equality we plugged in $b_k = t a_k = a_0 t(1 + t)^{-k}$ and substituted $\ell = k-d$.
Thanks to the assumed positivity of $g$, taking $j \to \infty$ amounts to turning the last double sum into a double series with both indices going to $\infty$. This gives
\begin{equation}
\frac{1}{1-t} \sum_{\ell = 0}^{\infty} \sum_{d = 1}^{\infty} (1 + t)^{- \ell} \lb \frac{1 + t}{1 - t} \rb^{-d} (\ln t + g_{\ell, d}) 
\leq \frac{1}{2t^2} \ln t +
\sum_{\ell, d = 0}^{\infty} (1 + t)^{- \ell} \lb 1+t \rb^{-2d} g_{\ell, d} \,,
\end{equation}
where we have used the explicit formula for the geometric series and the positivity of $g$.
Collecting all estimates, the double limit in~\eqref{e:4.24} is at least
\begin{equation}
\ln{\frac{1}{t}} - 2 \big(t^2 + O(t^3)\big) \hat{g}\big(\ln (1+t),\, 2\ln (1+t) \big) - C \,.
\end{equation}
Finally, substituting $t= e^s - 1 = s+O(s^2)$ and noting that $\ln t^{-1} = \ln s^{-1} + O(s)$, we get the desired statement.
\end{proof}

We can now give
\begin{proof}[Proof of Proposition~\ref{prop:B2-general-necessary}]
By replacing $g_{\ell, d}$ by $g^{+}_{\ell, d} = g_{\ell, d} \vee 0 \ge g_{\ell, d}$, which only decreases the free energy, we may assume w.l.o.g. that $g_{\ell, d} \geq 0$ for all $\ell$ and $d$. Then
\eqref{eq:B2-general-assumption-for-no-wetting} implies that
$\hat{g}(s, 2s) = O (s^{-2} \log (1/s))$ so that $\hat{g}(s, 2s) O(s^3) = o(1)$ as $s \to 0$. In view of Lemma~\ref{lem: necessary b2 lemma} we thus get $J^* = -\infty$ and therefore there is no wetting transition.
\end{proof}

\begin{proof} [Proof of Corollary~\ref{cor: b2 additive necessary}]
Define $g^{(1)}_l := h^{(1)}_l - (\ln 2) l$, so that 
    \begin{align} \label{eq:B2-necessary-additive-structure}
        g_{\ell, d} = g^{\lb 1 \rb}_\ell + h^{\lb 2 \rb}_d \,.
    \end{align}
By assumption $|g^{(1)}_k|, |h_k^{(2)}| = O(k)$ as $k \to \infty$, so that
$|\hat{g}^{(1)}(s)|, |\hat{h}^{(2)}(s)| = O(s^{-2})$ as $s \downarrow 0$.
The above additive structure then gives
\begin{equation}
\label{e:4.34}
\begin{split}	
        \hat{g} \lb s, 2s \rb & = \sum_{\ell = 0}^\infty \sum_{d = 0}^\infty e^{-s \ell - 2s d} \lb g^{\lb 1 \rb}_\ell + h^{\lb 2 \rb}_d \rb \\
        & = \sum_{\ell = 0}^\infty e^{-s\ell} g^{\lb 1 \rb}_\ell \sum_{d = 0}^\infty \lb e^{-2s} \rb^d + \sum_{d = 0}^\infty e^{-2s d} h^{\lb 2 \rb}_d \sum_{\ell = 0}^\infty \lb e^{-s} \rb^\ell \\
        & =  \lb \frac{1}{2s} \hat{g}^{\lb 1 \rb} \lb s \rb + \frac{1}{s} \hat{h}^{\lb 2 \rb} \lb 2s \rb \rb \lb 1 + O \lb s \rb \rb \\ &
        = \frac{1}{2s} \hat{g}^{\lb 1 \rb} \lb s \rb + \frac{1}{s} \hat{h}^{\lb 2 \rb} \lb 2s \rb + O(s^{-2})\,.
\end{split}
\end{equation}
At the same time,
    \begin{align}
    \label{e:4.35}
        \hat{h}_{\lfloor \cdot/2 \rfloor}^{\lb 2 \rb} (s) &= \sum_{d = 0}^\infty e^{-ds} h_{\lfloor \frac{d}{2} \rfloor}^{\lb 2 \rb} = (2 + O(s)) \sum_{d = 0}^\infty e^{-2ds} h_d^{\lb 2 \rb}  = 2\hat{h}^{\lb 2 \rb} (2s) + O(s^{-1}) \,.
    \end{align}

Now, as in the proof of Corollary~\ref{c:1.11}, condition~\eqref{eq:B2-additive-necessary} implies that
    \begin{align} \label{eq: B2 4.39}
        \lim_{s \downarrow 0} \ln{\frac{1}{s}} - s \hat{g}^{\lb 1 \rb} \lb s \rb - s \hat{h}_{\lfloor \cdot/2 \rfloor}^{\lb 2 \rb} \lb s \rb = \infty \,,
    \end{align}
where we have used the linearity of the Laplace Transform.
Combining~\eqref{e:4.34},~\eqref{e:4.35} and~\eqref{eq: B2 4.39}, this gives
    \begin{align*}
        \lim_{s \downarrow 0} \lb \ln{ \frac{1}{s}} - 2 s^2 \hat{g} \lb s, 2s \rb \rb = \infty
    \end{align*}
    so there is no wetting transition.
\end{proof}

\section{Clustering via capacity}
\label{s:4}
\subsection{Properness of definition}
\begin{proof}[Proof of Proposition~\ref{p:1.5}]
The proof is based on the following argument by Lyons \cite{Lyons90, Lyons92}: 
Let $f$ be the minimizer ($\bbT^{(n)}$ is a finite graph - no ambiguity) 
in \eqref{eq:Cap}. Then 
\be{eq:theta-flow} 
\theta_A (e ):= \frac{1}{{\rm CAP}_\calC (A )} \sfC_e \nabla_e f
\ee
is a unit flow from $\mathsf{0}$ to $\bbL_0$.
Define resistances $\sfR_e = \sfC_e^{-1}$. Given a flow $\theta$ define
its energy $\calE (\theta )= \sum_e \sfR_e \theta (e )^2$. Then,  $\calE (\theta_A ) = 
{\rm CAP}_\calC (A )^{-1}$, and  $\theta_A$ has the minimal energy within the family 
$\calF_{A}$ 
of
unit flows from $\mathsf{0}$ to $A$.

Next,  any unit flow $\theta\in \calF_A$ could be viewed 
as a probability distribution $\mu$ on $A\subseteq \bbL_0^{(n)}$; $\mu (u ) = \theta (e (u ))$. 
Kirchoff's law implies that for any 
 $u\not\in \lbr  \mathsf{0}\rbr\cup \bbL_0$, 
$
\theta (e(u )) = \sum_{e^\prime} \theta (e^\prime) , 
$ 
where the summation is with respect all outgoing edges from $u$ in the direction of  $\bbL_0$. Consequently, if $\calD (u )\subseteq \bbL_0$
 is the set of all the descendants of $u$ in $\bbL_0$, then 
 $
 \theta (e (u ))= \sum_{v\in \calD (u )} \mu (v) .
 $ 
 
 Recall that $R_e = R_{\abs{e}}$, and set 
\be{eq:rho-R}
\alpha_\ell^{(n)} = \1_{\ell <n}\sum_{j=\ell}^n \sfR_j. 
\ee 
Then, summing by parts in  
\[ 
\calE (\theta ) = \sum_{\abs{u} <n } R_{\abs{u}}
\lb \sum_{v\in \calD (u )}\mu (v )\rb^2 
\]
  one recovers the following formula:
\be{eq:E-Lyons} 
\calE (\theta )  = 
\sum_{u, v} \mu (u )\mu (v ) \alpha_{\abs{ u\wedge v}}^{(n)}  \ \text{ and } \ 
{\rm CAP}_\calC (A ) = \max_{\mu (A) = 1}  
\lbr \sum_{u,  v} \mu (u )\mu (v ) \alpha_{ \abs{ u\wedge v }}^{(n)}\rbr^{-1}
\ee
At this point, 
since $\alpha^{(n)}$ in \eqref{eq:rho-R} is non-increasing, 
we recover \eqref{eq:MC-a}, that is 
 $A\prec B$  implies  
that  ${\rm CAP}_\calC (A )\leq {\rm CAP}_\calC (B )$. 
\end{proof}

\subsection{No wetting transition}
\begin{proof} 
Since we know that ${\rm CAP}_\calC $ is a monotone clustering function, 
\[ 
{\rm CAP}_\calC  (A) \leq \sum_{u\in A } 
{\rm CAP}_\calC \lb\{ u\}\rb 
\stackrel{\eqref{eq:CAP-A-1}}{=} 
\sum_{u\in A}\sfC_0 \bbP_u^{\sf RW} \lb \tau_0 <\tau_u\rb \leq \sfC_0 \abs{A}. 
\]
It follows that $\sfZ^{\Phi,-J}_n$ in \eqref{eq:Phi-J} with $\Phi(A) = {\rm CAP}_\calC  (A)$,
satisfies
\[
\sfZ_n^{\Phi,-J} \geq \sum_{N=1}^{2^n} \binom{2^n}{N} \, 
{\rm e}^{-N (J +\sfC_0 )}. 
\]
Since, 
\[ 
\lim_{m_0\to\infty}	\lim_{n\to\infty}
\frac{1}{2^{n-m_0}} \ln \binom{2^n}{2^{n-m_0}}
=\infty , 
\]
the result follows from Corollary~\ref{c:1.9}.
\end{proof}

\section*{Acknowledgments}
The research was supported by ISF grants no.~2870/21 and~1723/14, and also by the BSF award 2018330. 

\section*{Data Availability}
No datasets were generated or analyzed during the current study.

\section*{Conflict of Interest}
The authors have no relevant financial or non-financial interests to disclose.

\bibliographystyle{abbrv}
\bibliography{pinning}

\end{document}